\theoremstyle{plain}
\newtheorem{Th}{Theorem}[section]
\newtheorem{Lemma}[Th]{Lemma}
\newtheorem{Cor}[Th]{Corollary}
\newtheorem{Prop}[Th]{Proposition}
\theoremstyle{definition}
\newtheorem{Def}[Th]{Definition}
\newtheorem{Construction}[Th]{Construction}
\newtheorem{Rem}[Th]{Remark}
\newtheorem{Not}[Th]{Notation}
\newtheorem{?}[Th]{Problem}
\newtheorem{Ex}[Th]{Example}
\newtheorem{Ass}[Th]{Assumption}
\newtheorem{Proper}[Th]{Property}
\def\<{\langle}
\def\>{\rangle}
\newcommand{\Aa}{{\mathbb A}}
\newcommand{\Spec}{\operatorname{Spec}}
\newcommand{\Proj}{\operatorname{Proj}}
\newcommand{\Sym}{\operatorname{Sym}}
\newcommand{\End}{\operatorname{End}}
\newcommand{\GW}{\operatorname{GW}}
\newcommand{\SH}{\operatorname{SH}}
\newcommand{\one}{\mathbbm{1}}
\newcommand{\OO}{\mathcal{O}}
\newcommand{\Sp}{\operatorname{sp}}
\newcommand{\chic}{\operatorname{\chi_c}}
\renewcommand{\P}{{\mathbb{P}}}
\newcommand{\Tr}{\operatorname{Tr}}
\newcommand{\Dim}{\operatorname{dim}}
\newcommand{\Sm}{\operatorname{\mathbf{Sm}}}
\newcommand{\id}{\operatorname{id}}
\newcommand{\rk}{\operatorname{rk}}
\numberwithin{equation}{section}
\begin{document}

\nocite{*}

\author{\textsc{Ran Azouri}}

\date{ }

\pagenumbering{arabic}

\title{\textsc{Motivic Euler Characteristic of Nearby Cycles and a Generalized Quadratic Conductor Formula}}

\maketitle

\begin{abstract}
	We compute the motivic Euler characteristic of Ayoub's nearby cycles spectrum in terms of strata of a semi-stable reduction, for a degeneration to multiple semi-quasi-homogeneous singularities. This allows us to compare the local picture at the singularities with the global conductor formula for hypersurfaces developed by Levine, Pepin Lehalleur and Srinivas, revealing that the formula is local in nature, thus extending it to the more general setting considered in this paper. The result is a quadratic refinement to the Milnor number formula with multiple singularities.
\end{abstract}
	
\setcounter{tocdepth}{1}
\tableofcontents

\section{Introduction}

\subsection{The Milnor fibre, nearby cycles and the Euler characteristic}

{Let $X$ be a complex manifold of dimension $n+1$ and $f: X \to D$} be a {non-constant} holomorphic function to the open unit disk $D$. {Suppose that $X_t:=f^{-1}(t)$ is smooth for $0<|t|<1$, and that $X_0$ has an isolated singularity $p$.} Take a small $\epsilon > 0$, and even smaller $t$, and consider $B_{p,\epsilon}$, the open ball with radius $\epsilon$. The Milnor fibre $M_{f,p}$ is defined by the intersection
$B_{p,\epsilon} \cap X_t$. $M_{f,p}$ is homotopically equivalent to a wedge of spheres, the number of which is defined to be the Milnor number, $\mu_{f,p}$ {\cite[Theorem 6.5]{Mil}}, an invariant of the singularity $p$; we may also consider the sheaf on $X_0$ defined by
$ x \mapsto H^*(M_{f,p}, \mathbb{Z}) $; as an alternative invariant we could compare the Euler characteristics of the fibres $\chi^{top}(X_t)$ and $\chi^{top}(X_0)$. Conductor formulas express the difference of Euler characteristics in the case of a proper map $f$ in terms of local invariants around the singular points of the special fibre:
\[ {\chi(X_t) - \chi(X_0) = \textit{invariants related to the singular points of } X_0.} \] 
{In the setting of complex geometry, this was investigated by Milnor. Suppose now that $f$ is a submersion outside a finite subset $\{p_1,\ldots, p_s\}$ of $X_0$. 
	At each singular point $p$, a choice of local coordinates $s_0,...,s_n$ for a neighbourhood of $p$ gives a description of the Milnor number $\mu_{f,p}$ by local terms, as the dimension of the Jacobian ring of $f$ at $p$, that is (\cite[Theorem 7.2]{Mil}),
	\[
	\mu_{f,p} = \Dim \OO_{X,p}/(\partial f / \partial s_0,..., \partial f / \partial s_n).
	\] 
	As an immediate consequence, we have the conductor formula with \emph{multiple} isolated singularities},
\begin{equation} \label{MilnorFormula}
	\chi^{top}(X_t)-\chi^{top}(X_0) = (-1)^{n} \sum_i   \mu_{f,p_i} .	
\end{equation}
In \cite{MO} Milnor and Orlik compute the value of $\mu_{f,p}$ explicitly  for the case of $f$ being a weighted homogeneous polynomial. 

These concepts around the Milnor fibre in the complex setting can all be developed in the world of algebraic geometry and \'etale cohomology. Let $f: X \to S$ be a flat family of schemes. We assume that we have a distinguished {closed} point $\sigma \in S$, with complement $\eta = S \setminus \sigma \hookrightarrow S$; so that the fibre of $f$ over $\eta$ is a smooth generic fibre $X_\eta $; and the fibre over $\sigma$ is the special fibre $X_\sigma$, which may be singular. The definition of the Milnor number in terms of the Jacobian ring carries naturally to this case, the Euler characteristics can be defined as well, using $l$-adic \'etale cohomology, and the cohomology of the Milnor fibre can be realised through the formalism of the nearby cycles functor 
 $$ {\Psi_f : \mathbf{D}_{cons}^b(X_\eta) \rightarrow \mathbf{D}_{cons}^b(X_\sigma)},$$
defined in {\cite[Expos\'e I, 2]{SGA7}}. {The Deligne-Milnor conjecture \cite[Expos\'e XVI, Conjecture 1.9]{SGA7II} is concerned with an algebraic version of Milnor's formula,  without restriction to characteristic zero.} Let $f: X \to S$ be a separated, finite type, flat morphism of relative dimension $n$, where $S$ is a henselian trait. Suppose that $X$ is {regular, that the general fibre $X_\eta$ is smooth over $\eta$,} and that $X_\sigma$ has a unique singular closed point $p$. Let $l$ be a prime number which is invertible on $\OO_S$. Then
\begin{equation} \label{DeligneFormula} \chi ^{l-adic} (X_ \eta) - \chi ^{l-adic} (X_\sigma) + \Dim Sw(\Phi^n(\mathbb{F}_\ell)_p) = (-1)^n \mu_{f,p} 
\end{equation}

 with the \textit{Swan conductor} $Sw(\Phi)$ being an additional term, adjusting for the case of positive characteristic. The formula is proven in the case of equal characteristics (\cite[Expos\'e XVI, Th\'eor\`eme 2.4]{SGA7II}),  and in the cases of relative dimension 1 and of $(X_\sigma)_{red}$  a simple normal crossing divisor (\cite{Blo}, \cite[Theorem 6.2.3]{KS}, and \cite[Th\'eor\`eme 0.8]{Or} for the statement with Milnor number as appearing here), most recently in full generality in \cite{BP}. The global difference of Euler characteristics is related to considering $\Dim \Phi^n(\mathbb{F}_\ell)_p$ at the singularity $p$, where $\Phi$ is the vanishing cycles functor.
 The local formula yields, as in the complex analytic case, a conductor formula for a flat proper map $f:X\to S$ as above, and allowing the special fibre to have \emph{multiple} isolated singularities. One may seek \emph{a quadratic refinement} for formulas \ref{MilnorFormula} and \ref{DeligneFormula}, that is, an identity of quadratic forms over a common base field, instead of an identity of integers. These quadratic refined invariants, which contain at once info about the schemes varying the base field, arise when considering motivic analogues for the relevant concepts in algebraic geometry.

\subsection{Motivic refinements}

In the context of motivic homotopy theory, the nearby functor cycles formalism has been developed by Ayoub in \cite{Ay07a}. Here the bounded derived category is replaced by the $\Aa^1$-homotopy category $\SH(-)$, constructing a functor
\[
  {\Psi_f : \SH(X_\eta) \rightarrow \SH(X_\sigma)}.
 \]
One may consider \emph{the motivic nearby cycles spectrum} $\Psi_f\one \in \SH(X_\sigma)$ as an object of study, as well as its restriction to different subschemes or points of $X_\sigma$, e.g if $p \in X_\sigma$, we may consider the invariant $(\Psi_f\one)|_p \in \SH(k(p))$ (the only non trivial part of $\Psi_f\one$ is at the singular locus). 
As a somewhat parallel concept, but from a motivic integration approach, Denef and Loeser \cite{DL}, \cite {DL00} constructed a motivic Milnor fibre defined by a class $[S_f]$ in the Grothendieck ring of varieties. It is expressed in terms of certain \'etale coverings for strata of the special fibre. Using rigid analytic motives Ayoub, Ivorra and Sebag \cite{AIS} show that the class of the motivic nearby cycles spectrum in $K_0(\SH(X_\sigma))$ is equal to the one computed by those covers, that is
$[\Psi_f \one] = [S_f]$.

Within the setting of stable $\Aa^1$-homotopy theory, we can refine the topological Euler characteristic as well to a motivic setting. The \emph{motivic, or quadratic, Euler characteristic} of a smooth and proper scheme is defined as the categorical trace of the identity morphism of the motive of the scheme in the category of motivic spectra $\SH(k)$. A variant definable over singular schemes is the compactly supported Euler characteristic. Working over a perfect field $k$, for every finite type $k$-scheme $X$ we get an element $\chic(X/k)$ in the Grothendieck-Witt group $\GW(k)$.
We may also consider $\chic(-)$ of any dualisable object in $\SH(k)$, such as $\Psi_f\one$ considered over $k$, and so we introduce the main invariant studied in this paper, \emph{the quadratic Euler characteristic of the nearby cycles spectrum}, \[ \chic(\Psi_f \one)  \in \GW(k) ; \]
we may also restrict the nearby cycles spectrum to a point $p$ in the special fibre, giving a \emph {motivic version for the Euler characteristic of the Milnor fibre}, $\chic(\Psi_f \one|_p)  \in \GW(k(p))$. This gives an invariant on quadratic forms for an isolated singularity on a scheme.

Let $\OO$ be a discrete valuation ring with residue field $k$,  fraction field $K$ and a fixed uniformizer $t\in \OO$. {Let $F(T_0,\ldots, T_{n})\in k[T_0,\ldots, T_n]$ be a homogeneous (or weighted-homogeneous) polynomial of degree $e$, defining a smooth projective (or weighted projective) hypersurface. The hypersurface $H^F$ defined in $\P^{n+1}_\OO$ by $F(T_0,\ldots, T_{n}) - tT_{n+1}^e$ thus gives a family of hypersurfaces that degenerates to the cone over the section defined by $F$. With this setup, Levine, Pepin Lehalleur and Srinivas \cite[Theorem 5.6]{Le20b} develop a quadratic conductor formula that takes the form}
\[ \Delta_t(F/k) := \Sp_t \chic (H^F_t) - \chic (H^F_0) =  \< e\> - \<1\> + (-\<e\>)^{n} \cdot \mu_{F,0}^q  \] in the homogeneous case. They also develop a similar formula for a weighted homogeneous $F$ \cite[Theorem 5.3]{Le20b}.
Since $\chic (H^F_t) \in \GW(K)$, $\chic (H^F_0) \in \GW(k)$ live in different rings, one has to use the specialization map $\Sp_t: \GW(K) \rightarrow \GW(k) $ to compare them; the term $\mu_{F,0}^q \in \GW(k)$ in the right hand side is a quadratic refinement of the Milnor number $\mu_{F,0} \in \mathbb{Z}$. It can be defined in algebraic terms by a certain quadratic form on the Jacobian ring $J(F,0)$, corresponding to a distinguished element in this ring defined by Scheja-Storch.  
 The main goal of this paper is to formulate and prove a generalization of this result, for a more general scheme, and with multiple singularities. Our first main result is  Theorem \ref{introthm}, a formula for the quadratic Euler characteristics of the motivic nearby cycles spectrum at a semi-quasi-homogeneous singular point $p$, $\chic(\Psi_f\one|_p)$, in terms of invariants of the defining polynomial $F$. This, combined with the conductor formula of \cite{Le20b} for projective hypersurfaces, and the functoriality of nearby cycles, provides a conductor formula for a scheme with several isolated singularities, Theorem \ref{introformula}.

\subsection{Outline and main results}

After this introduction, in section 2 we first review some basic facts about the quadratic Euler characteristic with compact supports $\chic(-)$, including its behaviour with respect to open-closed decomposition of a scheme.

In section 3 we proceed to discuss the invariant $\chic(\Psi_f \one)$ for a flat morphism $f:X \to S$. In the case of a special fibre which is supported on a normal crossing divisor, $X_\sigma = \sum a_i D_i$, where there are no triple intersections of the $D_i$, we show that a certain geometric construction gives a semistable reduction for $X$ from which we conclude (Proposition \ref{AIS}, with an assumption on the characteristic of the base field), that \[\chic(\Psi_f \one) = \chic ([S_f]) =  \sum_i \chic(\widetilde{D_{i}^\circ}) - \sum_{i < j}  \chic(\mathbb{G}_m \times \widetilde{D_{ij}}) ,\]
 where $ \widetilde{D_{i}^\circ} $, $\widetilde{D_{ij}}$ are certain étale covering of the strata of $X_\sigma$ defined in Section~\ref{DLDef}. This reproves a special case of the more general formula of Ayoub-Ivorra-Sebag mentioned above, \cite[Theorem 8.6]{AIS}. It is the same geometric construction that we present here, with which we proceed to the results in the rest of the paper. Using our method of proof we can get the same formula also in some cases in which the $D_i$ are not smooth, to treat the quasi-homogeneous case, see Remark \ref{qAIS} and Remark \ref{qRem}.

Next we proceed to computing our invariant at an isolated singular point $p$, that is we compute \[ \chic(\Psi_f \one |_p )  \in \GW(k(p)).\]  In section 4 we deal with \emph{the homogeneous case}.  Our setup is as follows:

\begin{Def} \label{lookslike} Let $f: X \rightarrow \Spec \OO $ be a flat quasi-projective morphism of schemes over a discrete valuation ring $\OO$ with quotient field $K$, residue field $k$ and uniformizer $t$, with $X$ being a regular scheme and with $X_\eta / K$ smooth. Let $p\in X_\sigma$ be an isolated singular point and let $F\in k(p)[T_0,\ldots, T_n]$ be a homogeneous polynomial of degree  $e$; let $\OO_{X,p}$ be the stalk at $p$, and $m_p \subset \OO_{X,p}$ the maximal ideal. We say that $X_\sigma$ {\em looks like the homogeneous singularity defined by $F$ at $p$} if there is a regular sequence of generators $s_0,\ldots,s_n $
for $m_p$ such that
\[
f^*(t)\equiv F(s_0,\ldots, s_n)\mod  m_p^{e+1}.
\]
\end{Def}

We then construct semi-stable reduction $Y$ for $X$ by a blow-up, followed by base change and normalisation. Using the key result by Ayoub that the functor $\Psi_{(-)}$ is computable on strata of semi-stable schemes, we obtain a formula for our invariant, the Euler characteristic of nearby cycles at the singular point.
\begin{Th}[Corollary~\ref{CorLocalHomogFormula}] \label{introthm}
	Let $f: X  \to \Spec \OO $ be as in definition~\ref{lookslike}, with $p \in X_\sigma$ an isolated singularity of the special fibre $X_\sigma$, on which  $f$ looks like the singularity defined by a homogeneous polynomial  $F\in k(p)[T_0,\ldots, T_n]$ of degree $e$,  with $V(F)\subset \P^n_{k(p)}$ a smooth hypersurface, and with $e$ prime to the exponential characteristic of $k$; assume $\Psi_f \one$ is dualisable (e.g. in characteristic $0$). Then
	\[
	\chic(\Psi_f\one|_p)= \chic(V_{\P^{n+1}}(F-T_{n+1}^{e}))-
	\<-1\> \chic(V_{\P^n}(F)).
	\]
\end{Th}

In Section 5 we treat the more general {\em quasi-homogeneous} case, where the defining polynomial $F$ at each singular point is a {\em weighted homogenous} polynomial with respect to a sequence of positive integer weights $a_*=(a_0,\ldots,a_n)$. The projective space $\mathbb{P}^n$ is replaced by the $a_*$-weighted projective space $\mathbb{P}(a_*)$ and its presentation as a finite group quotient of $\P^n$ is used to lift to the homogeneous case. For the precise definition of when $f$ looks like a quasi-homogeneous singularity at $p \in X_\sigma$ see Definition \ref{qdef}. For the precise assumption on the special fibre in this case, see Assumption~\ref{qassumption}. We then get the same result as that of Theorem~\ref{introthm} for this more general case, Corollary~\ref{CorLocalqHomogFormula}.

\begin{Rem} The notion of singularity discussed here includes the case of a quasi-homogeneous singularity, but allows for additional higher degree terms in the local expansion. It is closely related to the notion of \emph{semi-quasi-homogeneous singularity} appearing in the literature, see e.g. \cite[Definition 2.17]{GLS}.	\end{Rem}

The quadratic Milnor number $\mu_{f,p}^q$ is the same as the $\Aa^1$-local Euler class for $X$ at $p$, $e_{p}( \Omega _{X/k} , dt )$. This class is the same as the local Euler class for $H^{F}$, which also equals to the quadratic Milnor number $\mu_{F,0}^q$, defined purely in algebraic terms terms depending on $F$. This is dealt with in section 6, using an $\Aa^1$-homotopy invariance argument (Corollary~\ref{Eulerclass}).

We then have the components needed to deduce the main theorems in section 7. First we have the following formula (Theorem~\ref{mainthm}, for simplicity assuming $k(p)=k$)
One may think of the left hand side as enumerating vanishing cycles for $X$ around $p$, and the right hand side as doing the same for the hypersurface $H^F$. So this gives us a comparison between $X$ and $H^F$, and allows us to use the main result of \cite{Le20b} for $H^F$, in order to get a formula for the scheme $X$ at $p$. Using the formalism of Ayoub's functor, we can consequently extend it to a global formula on a scheme $X$ with several semi-quasi-homogeneous singularities. 
\begin{Th}[Generalized quadratic conductor formula for quasi-homogeneous singularities, Corollary~\ref{conductor}] \label{introformula}
	Let $f: X \rightarrow \Spec \OO$ be as in Definition~\ref{lookslike}, of relative dimension $n$ {with $f$ proper} and $k$ of characteristic $0$. Suppose that $X_\sigma$ satisfies Assumption~\ref{qassumption}, with singular points $\{p_1,...,p_s\}$. Let $e_i$ denote the weighted-homogeneous degree of the corresponding polynomial $F_i$. Then
	\[
	\Sp_t\chic(X_\eta)-\chic(X_\sigma) = \sum_i \Tr_{k(p_i)/k} \left( \< \prod_j a_j^{(i)} \cdot e_i\> - \<1\> + (-\<e_i\>)^{n} \cdot \mu_{f,p_i}^q \right) .\]
\end{Th}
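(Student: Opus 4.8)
\emph{Proof strategy.} The plan is to reduce the global statement to the local computation of Theorem~\ref{mainthm} by means of the compatibility of Ayoub's nearby cycles functor with proper pushforward, and then to feed in the weighted-homogeneous quadratic conductor formula of \cite{Le20b} at each singular point. (Equivalently, one may start from the strata formula of Corollary~\ref{CorWtHomogFormula}; the route through Theorem~\ref{mainthm} is the more economical one.)

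The first step, and the one where properness of $f$ is essential, is to identify the global Euler characteristic of the nearby cycles with the specialization of the generic Euler characteristic. Since $f$ is proper, Ayoub's functor $\Psi_f$ commutes with pushforward to the base point, i.e.\ $(p_{X_\sigma})_*\,\Psi_f(\one_{X_\eta})\simeq \Psi_{\Spec\OO}\,(p_{X_\eta})_*(\one_{X_\eta})$ in $\SH(k)$. Taking compactly supported Euler characteristics (which for the proper schemes $X_\eta/K$ and $X_\sigma/k$ coincide with the ordinary ones), and using that $\chic$ of a nearby-cycles spectrum over the trait $\Spec\OO$ is by construction $\Sp_t$ applied to $\chic$ of the corresponding spectrum over $K$, we obtain
\[
\chic(\Psi_f(\one_{X_\eta}))=\Sp_t\chic(X_\eta)\quad\text{in }\GW(k),
\]
so the left-hand side of the theorem equals $\chic(\Psi_f(\one_{X_\eta}))-\chic(X_\sigma)$.

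Next I would localize along the stratification $X_\sigma=X_\sigma^\circ\sqcup\{p_1,\dots,p_s\}$, where $X_\sigma^\circ$ is the locus over which $f$ is smooth. The localization triangle for $\Psi_f(\one_{X_\eta})$ along this decomposition, pushed to $\Spec k$, together with additivity of $\chic$ in triangles and the fact that $\Psi_f(\one_{X_\eta})$ restricts to $\one_{X_\sigma^\circ}$ over the smooth locus, gives
\[
\chic(\Psi_f(\one_{X_\eta}))=\chic(X_\sigma^\circ)+\sum_{i=1}^{s}\chic(\Psi_f(\one_{X_\eta})|_{p_i}).
\]
Plugging in Theorem~\ref{mainthm} for each stalk, $\chic(\Psi_f(\one_{X_\eta})|_{p_i})=\Tr_{k(p_i)/k}\bigl(\Delta_t(F_i/k(p_i))+\<1\>\bigr)$, and using that $\chic(X_\sigma)-\chic(X_\sigma^\circ)=\sum_i\Tr_{k(p_i)/k}(\<1\>)$ by additivity of $\chic$ over the locally closed strata, the unit terms cancel and we are left with
\[
\Sp_t\chic(X_\eta)-\chic(X_\sigma)=\sum_{i=1}^{s}\Tr_{k(p_i)/k}\bigl(\Delta_t(F_i/k(p_i))\bigr).
\]
Finally I would substitute, over each residue field $k(p_i)$, the weighted-homogeneous formula of Levine--Pepin Lehalleur--Srinivas (\cite[Theorem~5.3]{Le20b}), which expands $\Delta_t(F_i/k(p_i))$ as $\<\prod_j a_j^{(i)}\cdot e_i\>-\<1\>+(-\<e_i\>)^{n}\cdot\mu_{F_i,0}^q$, and use additivity of the transfer. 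The last point to check is the identification $\mu_{F_i,0}^q=\mu_{f,p_i}^q$: by Definition~\ref{lookslike} the element $f^*(t)\in\OO_{X,p_i}$ agrees with $F_i(s_0,\dots,s_n)$ modulo $m_{p_i}^{e_i+1}$, and since the Scheja--Storch bilinear form attached to an isolated hypersurface singularity depends only on the truncation of the defining function to sufficiently high order, the two quadratic Milnor numbers coincide. Assembling these identities yields the stated formula.

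I expect the main obstacle to be the first step, namely pinning down precisely that $\chic(\Psi_f(\one_{X_\eta}))=\Sp_t\chic(X_\eta)$: this is exactly the point where properness enters (the nearby cycles functor does \emph{not} commute with pushforward along non-proper maps), and it requires being careful both about the six-functor compatibilities of Ayoub's $\Psi$ and about how $\Sp_t$ is built from unipotent nearby cycles over the trait. Everything after that is additivity of $\chic$ over locally closed strata together with results already established in the paper, the only remaining subtlety being the higher-order invariance of the quadratic Milnor number used at the very end.
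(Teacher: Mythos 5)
Your overall route is the paper's: the identity $\chic(\Psi_f(\one_{X_\eta}))=\Sp_t\chic(X_\eta)$ (which the paper imports from \cite[Proposition 8.3]{Le20b}, and whose proof is indeed the proper-pushforward compatibility of $\Psi$, monoidality of $\Psi_{\id}$ in characteristic zero, and Proposition~\ref{simon}), the localization $\chic(\Psi_f(\one_{X_\eta}))=\chic(X_\sigma^\circ)+\sum_i\chic(\Psi_f(\one_{X_\eta})|_{p_i})$ of Proposition~\ref{local}, the local input of Theorem~\ref{mainthm}, the cancellation of the $\<1\>$ terms against $\chic(X_\sigma)-\chic(X_\sigma^\circ)=\sum_i\Tr_{k(p_i)/k}\<1\>$, and the substitution of \cite[Theorem 5.3]{Le20b} all reproduce the proof of Corollary~\ref{conductor}.

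The gap is in your final step, the identification $\mu^q_{F_i,0}=\mu^q_{f,p_i}$, which you justify by saying that the Scheja--Storch form ``depends only on the truncation of the defining function to sufficiently high order.'' The hypothesis only gives $f^*(t)\equiv F_i(s_0,\ldots,s_n)$ modulo $m_{p_i}^{e_i+1}$ (resp.\ modulo $m_{p_i}\cdot m^{(e_i)}_{p_i,s_*,a_*}$ in the weighted case), i.e.\ agreement just beyond the (weighted) degree of $F_i$, whereas the classical finite-determinacy bound for an isolated hypersurface singularity is governed by the Milnor number (for homogeneous $F_i$ of degree $e_i$ one has $\mu=(e_i-1)^{n+1}\gg e_i$). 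So the truncation you are allowed is not ``sufficiently high'' in any formal sense, and the invariance of the quadratic Milnor number under adding terms of higher (weighted) degree is a genuine theorem rather than a formality: it is exactly what Section 6 of the paper establishes. There one interpolates $F_i+\lambda h$ over $\Aa^1$, uses the (weighted) blow-up and the smoothness of $V(F_i)$ from Assumption~\ref{qassumption} to show the critical locus of the family stays concentrated along $p_i\times\Aa^1$ (Proposition~\ref{Euler1}), applies homotopy invariance of the local Euler class (Proposition~\ref{Euler2}), and then identifies the local Euler class with the Scheja--Storch form (Theorem~\ref{SSEulerClass}) and with $e_0(\Omega_{H^{F_i}/k(p_i)},dt)$ via the graph description in Corollary~\ref{Eulerclass}. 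Without this argument (or an equivalent statement about semi-quasi-homogeneous singularities valid over an arbitrary characteristic-zero residue field $k(p_i)$), your last step is unsupported; note that this is also the only place where the smooth (quotient) hypersurface hypothesis on $V(F_i)$ actually enters the comparison of the two quadratic Milnor numbers.
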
 

This settles Conjecture 5.7 in \cite{Le20b} for the case of characteristic zero and singularities resolved by a single blow-up with a smooth exceptional divisor (satisfying Assumption \ref{assumption} or \ref{qassumption}); in fact, our result handles cases not covered by Conjecture 5.7, as the types of singularities treated above are not necessarily homogeneous or weighted-homogeneous in the sense of {\it loc. cit.} This is a generalization of the formula in \cite[Theorem 5.6]{Le20b} even for the case of a single singularity, as it does not assume $X$ is the hypersurface $H^F$. An interesting aspect in the quadratic formula, is that besides generalizing the classical formulas over the complex and real numbers, the summands $\Tr_{k(p_i)/k} ( \< \prod_j a_j^{(i)} \cdot e_i\> - \<1\> )$ for each $p_i$ vanish in the classical cases and so make appearance only 'motivically'. For more on that last point see the discussion after Corollary~\ref{RefMilnor}.

In section 8,  we deduce a quadratic formula for curves on a surface, refining the Jung-Milnor formula for curves, Corollary~\ref{CurveFormula}; we also deduce an identity on the Witt ring, Corollary~\ref{WittIdentity}.

\section*{Acknowledgements}

This work was made in partial fulfilment of the author's PhD thesis. The author would like to thank his PhD supervisor Marc Levine for his continuous support and for his willingness to share his mathematical understanding throughout the work on this paper. The author would also like to thank Florian Ivorra and the anonymous referee for useful comments and corrections on earlier versions of this paper. The author was funded from the DFG through the SPP 1786 and from the ERC through the project QUADAG. The paper is part of a project that has received funding from the European Research Council (ERC) under the European Union’s Horizon 2020 research and innovation programme (grant agreement No 832833).\\
\includegraphics[scale=0.08]{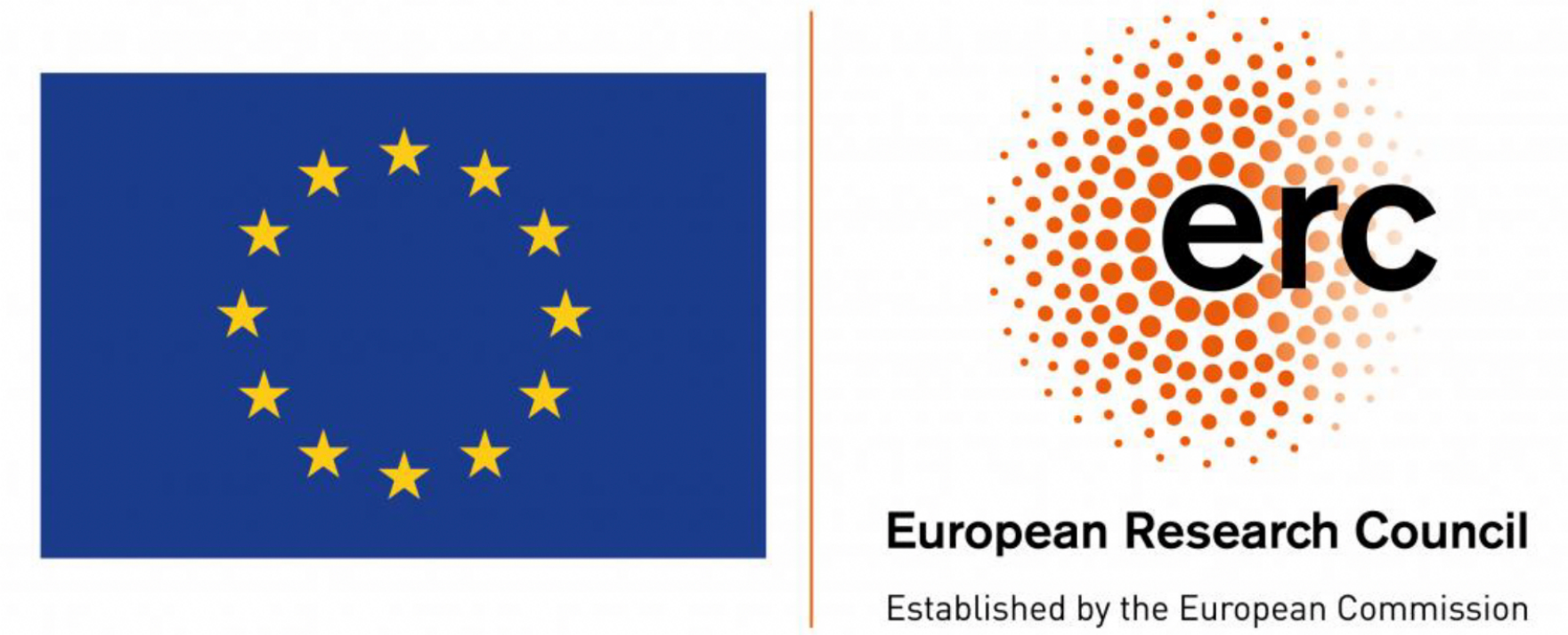}

\section{The motivic Euler characteristic with compact supports}\label{SectionMotivicEulerChar}

A construction of central importance in this paper is the {\em motivic Euler characteristic with compact supports}. For a finite type separated $k$-scheme $X$, $\chic(X/k)$ is an element in the Grothendieck-Witt ring $\GW(k)$ of $k$. Before going into a detailed discussion, we first give sketch of the main ideas that go into its construction. We use the notation and properties of the unstable and stable motivic homotopy categories to be found in \cite{Ay07a}, \cite{CD},  and \cite{Hoy}, including the six-functor formalism for $\SH(-)$.
 
Let $p: X \rightarrow \Spec k$ be a smooth and proper scheme over a  field $k$. {As we shall see below,} its motive with compact supports $p_! \one_X$ is a strongly dualisable object in the symmetric monoidal category $(\SH(F), \otimes)$, with dual $(p_! \one_X)^\vee = p_\# \one_X$. The Euler characteristic with compact supports of $X/k$ is   the trace of the identity endomorphism {for  $p_! \one_X \in \SH(k)$.
This yields an{ element in the ring $\End_{\SH(k)}(\one_k)$, which} is isomorphic to $\GW(k)$ via the Morel isomorphism {\cite[Lemma 6.3.8, Theorem 6.4.1]{Mo}. We} denote {the corresponding element of $\GW(F)$} by {$\chic(X/k)$; we omit $k$ when it is obvious from the context.}  For more details on the motivic Euler characteristic see \cite[\S 2]{Le20a}, and \cite[\S 1]{AMBOWZ} for a nice introduction on the compactly supported version. 

Here are some useful notations and definitions.

\begin{Not}
For a field $k$, we usually let $p$ denote the exponential characteristic of $k$, that is, $p$ is the characteristic of $k$ if this is positive, and is $1$ if the characteristic is zero. We will always assume that the characteristic is different than $2$. 

By $\GW(k)$ we denote the Grothendieck-Witt ring of $k$, i.e. the Grothendieck completion of the monoid of non-degenerate quadratic forms on $k$, with multiplication induced by tensor product of quadratic forms. For $a \in k^\times$, we denote by $\<a\> \in \GW(k)$ the class corresponding to the quadratic form $x \mapsto ax^2$
 
 \end{Not}

\begin{Def} Let $(\mathcal{C}, \otimes, \one_\mathcal{C} )$ be a symmetric monoidal {category, and take $x \in Ob(\mathcal{C})$.}  We say that $x$ is strongly dualisable if there exists an object $x^\vee \in Ob(\mathcal{C})$ and morphisms $ \delta_x: \one_\mathcal{C} \rightarrow x \otimes x^\vee$ and {$ev_x:  x^\vee \otimes x \rightarrow \one_\mathcal{C}  $, called }respectively co-evaluation and evaluation, such that
	\[ x \simeq \one_\mathcal{C} \otimes x \xrightarrow{\delta_x \otimes id} x \otimes x^\vee \otimes x \xrightarrow{id \otimes ev_x} x \otimes \one_\mathcal{C} \simeq x\]
	and
	\[ x^\vee \simeq x^\vee \otimes \one_\mathcal{C} \xrightarrow{id \otimes \delta_x }  x^\vee \otimes x \otimes x^\vee \xrightarrow{ev_x \otimes id} \one_\mathcal{C} \otimes x^\vee \simeq x^\vee \]
	are the identity morphisms. We call the object $x^\vee$ the dual of $x$.  \\
For $x$ a strongly dualisable object of $\mathcal{C}$ and $f:x\to x$ an endomorphism, the {\em categorical trace} of $f$ is the element $tr(f)\in \End_\mathcal{C}(\one_\mathcal{C})$ defined as the composition 
\[
\one_\mathcal{C}\xrightarrow{\delta_x}x \otimes x^\vee \xrightarrow{f\otimes id} x \otimes x^\vee \xrightarrow{\tau_{x, x^\vee}}x^\vee \otimes x\xrightarrow{ev_x}\one_\mathcal{C}.
\]
In particular, taking $f=id_x$, we have the {\em categorical Euler characteristic} $\chi_\mathcal{C}(x):=tr_x(id_x)$.
\end{Def}

\begin{Rem}\label{ChiMult}
It follows directly from the definitions that for $x, y$ strongly dualisable objects of $\mathcal{C}$, we have
\[
\chi_\mathcal{C}(x\otimes y)=\chi_\mathcal{C}(x)\otimes\chi_\mathcal{C}(y).
\]
\end{Rem}

\begin{Def}[\hbox{\cite[Definition 4.2.1]{CD}}] Define 
{$\SH_c(X)$, the subcategory of constructible objects in $\SH(X)$, as the thick triangulated subcategory generated by the objects $ \Sigma_{\P^1}^n f_{\#}  \one_Y$, where $f: Y \rightarrow X$ is a smooth $X$-scheme and $n \in \mathbb{Z}$.} An object in this category is called a constructible object.
\end{Def}

\begin{Prop} \label{constfunc}
	Constructible objects are stable under $f^*$ for any morphism $f$, under $f_{\#}$ for a smooth $f$, under $f^!$ for a proper $f$, and under $f_!$ for a separated $f$ of finite type (\cite[Proposition 4.2.4, 4.2.11, 4.2.12]{CD}).
	
	In addition, for $i: Z \hookrightarrow X$ a closed immersion and $j: U \hookrightarrow X$ its open complement, an object $\alpha \in \SH(X)$ is constructible if and only if $i^*\alpha$ and $j^* \alpha$ are constructible (\cite[Proposition 4.2.10]{CD}).
\end{Prop}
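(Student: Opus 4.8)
This is \cite[Propositions 4.2.4, 4.2.10--4.2.12]{CD}, and the plan is to indicate the standard argument. Since every functor in the statement is triangulated and $\SH_c(X)$ is by definition the thick subcategory of $\SH(X)$ generated by the objects $\Sigma_{\P^1}^n g_\# \one_Y$ with $g : Y \to X$ smooth and $n \in \mathbb{Z}$, it is enough in each case to check that the functor carries such generators into the appropriate constructible subcategory. For $f^*$ I would use smooth base change: $f^* g_\# \one_Y \simeq g'_\# \one_{Y\times_X X'}$ with $g'$ the (smooth) base-changed projection, together with the fact that $f^*$ commutes with $\Sigma_{\P^1}$. For $f$ smooth, $f_\# g_\# \one_Y \simeq (fg)_\# \one_Y$ with $fg$ smooth, and $f_\#$ commutes with $\Sigma_{\P^1}$ (being left adjoint to $f^*$, which commutes with the invertible endofunctor $\Sigma_{\P^1}$). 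Since $j_! \simeq j_\#$ for an open immersion $j$, this also handles $j_!$ for such $j$.

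The substantial case is $p_! \simeq p_*$ for $p$ proper. Here I would argue by noetherian induction on the base together with Chow's lemma to reduce to a projective morphism, factor a projective morphism as a closed immersion into some $\P^N_X$ followed by the projection $\pi : \P^N_X \to X$, handle $\pi$ via the projective bundle formula (or the cellular decomposition of $\P^N$, writing $\pi_*$ of a constructible object as an iterated extension of Tate twists of constructible objects), and handle a closed immersion $i : Z \hookrightarrow X$ with open complement $j$ via the localization triangle $j_! j^* \alpha \to \alpha \to i_* i^* \alpha \xrightarrow{+1}$, which exhibits $i_* i^* \alpha$ as the cofiber of a map of constructible objects. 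The general separated finite-type case of $f_!$ then follows from Nagata compactification: writing $f = p \circ j$ with $j$ an open immersion and $p$ proper, $f_! = p_* j_!$.

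For $f^!$ with $f$ proper I would deduce the claim from the cases already treated via Verdier duality, which conjugates $f^*$ into $f^!$ through the duality functor $\mathbb{D}$, using that $\mathbb{D}$ preserves constructibility; this last point is the technically heaviest input and is the reason the result is quoted rather than reproved here. Finally, for the localization criterion, the ``only if'' direction is just the stability of $\SH_c$ under $i^*$ and $j^*$ noted above; for ``if'', the same localization triangle $j_! j^* \alpha \to \alpha \to i_* i^* \alpha \xrightarrow{+1}$, together with stability of $\SH_c$ under $i_*$ (a closed immersion is proper) and under $j_! = j_\#$, shows that $\alpha$ is an extension of two constructible objects, hence constructible. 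I expect the main obstacle to be exactly the proper-pushforward d\'evissage --- Chow's lemma and the reduction to projective-space bundles and closed immersions --- together with the passage to $f^!$ through duality.
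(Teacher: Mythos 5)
The paper offers no proof of this proposition at all: it is quoted directly from Cisinski--D\'eglise, so there is no argument of the paper to compare yours against. Your outline is a reasonable reconstruction of the d\'evissage in \cite{CD}: reduce to the generators $\Sigma_{\P^1}^n g_\#\one_Y$, use the exchange isomorphism $f^*g_\#\simeq g'_\#f'^*$ for $f^*$, the identity $f_\#g_\#=(fg)_\#$ for smooth $f$, and Nagata compactification plus $j_!\simeq j_\#$, the projective-bundle case and the localization triangle for $f_!$; the localization criterion is handled as you say.

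Two caveats, one of which is a genuine gap in the sketch as written. First, in your proper-pushforward d\'evissage the closed-immersion step is too quick: what is needed there is that $i_*\beta$ is constructible for an \emph{arbitrary} constructible $\beta\in\SH_c(Z)$ (this is \cite[Proposition 4.2.11]{CD}), whereas the triangle $j_!j^*\alpha\to\alpha\to i_*i^*\alpha$ only shows that $i_*i^*\alpha$ is constructible when $\alpha$ is already constructible on the ambient scheme $X$. The generators $g_\#\one_W$ of $\SH_c(Z)$ are not a priori of the form $i^*(\text{constructible on }X)$ (knowing that they are is essentially equivalent to the statement being proved), so one must argue further, e.g.\ by lifting the smooth $Z$-scheme $W$ Zariski-locally along $i$ to a smooth $X$-scheme, using the exchange isomorphism $i_*g_\#\simeq g'_\#i'_*$, and concluding by a Mayer--Vietoris induction; this extra step is where the real content of 4.2.11 lies, and the same point is silently used in your ``if'' direction of the localization criterion. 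Second, constructibility under $f^!$ for proper $f$ is not a formal consequence of the elementary statements: already for a closed immersion $i$, constructibility of $i^!\alpha$ is equivalent (via $i_*i^!\to\id\to j_*j^*$) to constructibility of $j_*j^*\alpha$, which is the hard direction and in \cite{CD} rests on the much heavier results for quasi-excellent schemes rather than on a formal duality argument. Your decision to defer exactly this point to the citation is therefore the right one --- and is precisely what the paper itself does by citing rather than proving the proposition.
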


\begin{Prop} \label{constdual}
Take $\alpha \in \SH(k)[1/p] $, with  $k$ a field of exponential characteristic $p$. If $\alpha$ is constructible then it is strongly dualisable.
\end{Prop}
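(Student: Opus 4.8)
The plan is to reduce the statement to a known dualizability criterion via the structure results that precede it. Recall that $\SH(k)[1/p]$ for $k$ perfect is generated, as a thick triangulated (indeed localizing) subcategory, by the $\Sigma_{\P^1}^n f_\# \one_Y$ with $f: Y \to \Spec k$ smooth; by definition of $\SH_c$, the constructible objects form exactly the smallest thick triangulated subcategory containing these generators. So by Proposition~\ref{May}(1), the strongly dualisable objects in $\SH(k)[1/p]$ form a thick subcategory, and it suffices to show that each generator $\Sigma_{\P^1}^n f_\# \one_Y$ is strongly dualisable. Since tensoring with the invertible object $\Sigma_{\P^1}^n \one_k$ (which is its own inverse up to a shift and Tate twist) preserves strong dualisability, and since strong dualisability is insensitive to such twists, I can further reduce to showing that $f_\# \one_Y \in \SH(k)[1/p]$ is strongly dualisable for every smooth $k$-scheme $f: Y \to \Spec k$.

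The heart of the argument is then the Atiyah-duality / purity statement: for $f: Y \to \Spec k$ smooth and separated of finite type, $f_\# \one_Y$ is strongly dualisable in $\SH(k)$, with dual $f_* \Sigma^{-T_f} \one_Y$ (the Thom spectrum of the negative tangent bundle), or equivalently $f_! \one_Y$ twisted by $-T_f$. First I would invoke Ayoub's or Cisinski-Déglise's six-functor formalism: $f$ smooth gives $f_\# = f_! \circ \Sigma^{T_f}$ (the purity isomorphism relating $f_\#$ and $f_!$ through the Thom transformation of the relative tangent bundle), and for $f$ smooth proper one has Atiyah duality directly. For the general smooth $f$, embed $Y$ into a smooth proper $k$-scheme — or more robustly, use the localization triangles of Proposition~\ref{constfunc}/\ref{May} together with Noetherian induction on $Y$: choose an open $U \subset Y$ that is, say, the complement of a divisor so that $U$ and the closed complement $Z$ are handled inductively, and the localization triangle $j_! j^* \to \id \to i_* i^*$ (applied to $\one_Y$, then pushed forward by $f_\#$ via the appropriate base-change isomorphisms) exhibits $f_\# \one_Y$ as an extension of dualisable objects, hence dualisable by Proposition~\ref{May}(1). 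Here the inversion of $p$ and perfectness of $k$ are exactly what is needed to run resolution-of-singularities-free arguments (de Jong alterations, or Gabber's refinements), so that one can always find such a stratification with smooth strata after inverting $p$.

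Concretely, the cleanest route I would take: (i) by the reductions above, reduce to $f_\# \one_Y$ for $Y/k$ smooth; (ii) cite the fact (Riou, Ayoub, or Cisinski-Déglise) that in $\SH(S)$ the object $f_\# \one_Y$ for $f: Y \to S$ smooth is strongly dualisable whenever $Y$ admits a suitable compactification / is "nice", and that over a perfect field after inverting $p$ every smooth $Y$ is nice in the relevant sense because one can compactify and resolve the boundary up to alteration; (iii) assemble via the thick-subcategory argument. Alternatively, and perhaps most transparently, cite the known result that $\SH_c(k)[1/p] $ consists entirely of strongly dualisable (even "geometrically constructible" / "dualizable") objects — this is essentially Riou's theorem or can be extracted from Bondarko-Déglise or Cisinski-Déglise; the present proof would just be the formal reduction plus that citation.

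The main obstacle I anticipate is the passage from smooth proper $Y$ (where Atiyah duality is classical and unconditional) to arbitrary smooth $Y$ over a field of positive characteristic: this is where one genuinely needs either resolution of singularities or its characteristic-$p$ substitutes (de Jong/Gabber alterations), and it is precisely the reason the statement is restricted to $\SH(k)[1/p]$ with $k$ perfect rather than $\SH(k)$ integrally. So the crux is to argue that, after inverting $p$, the localization triangles let one trade a general smooth $Y$ for smooth proper pieces up to alteration — controlling the degree-$p^r$ factors introduced by alterations precisely by having inverted $p$ — and then to conclude by the thickness of the dualisable subcategory (Proposition~\ref{May}). Everything else is formal manipulation of the six operations: the smooth projection formula, the purity isomorphism $f_\# \cong f_! \Sigma^{T_f}$, base change, and the fact that $\otimes$ with an invertible object preserves strong dualisability.
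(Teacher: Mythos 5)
Your proposal is correct and follows essentially the same route as the paper: both arguments use Proposition~\ref{May}(1) to see that strongly dualisable objects form a thick subcategory of $\SH(k)[1/p]$, and then conclude by citing that the generators $\Sigma_{\P^1}^n f_\#\one_Y$ for $Y$ smooth over $k$ are strongly dualisable (the paper quotes \cite[Appendix B, Cor.~B2]{LYZR}, which is the same alteration-based result you gesture at). Your additional sketch of how that dualisability of generators is proved is not needed for the paper's argument, which treats it as a black-box citation.
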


\begin{proof}
 By \cite[Theorem 3.2.1]{EK} for every smooth, separated, and finite type morphism $Y \rightarrow k$ in $\Sm_k$, $\Sigma_{\P^1}^n f_{\#}  \one_Y$ is strongly dualisable in $\SH(k)$ (the result over a perfect $k$ is due to Riou in \cite[Appendix B, Cor. B2]{LYZR}). Since the subcategory of strongly dualisable objects is itself thick (see e.g. \cite[Theorem A.2.5]{HPS}) we get that all constructible objects are strongly dualisable.
\end{proof}
As a consequence we can now make the following definition.

\begin{Def} \label{chicdef}
	Let $k$ be a field of exponential characteristic $p$, let $q: X \rightarrow \Spec k$ be a $k$-scheme and $\alpha \in \SH(X)[1/p]$ a constructible object.
	Then $\chic(\alpha /k)$ is defined to be the categorical Euler characteristic of  $q_! \alpha $ in $\SH(k)[1/p]$:
	\[ \chic(\alpha /k):=\chi_{\SH(k)[1/p]}(q_! \alpha).
	 \]
This is well defined as $q_! \alpha \in \SH(k)$ is constructible by Proposition~\ref{constfunc} and is strongly dualisable (in $\SH(k)[1/p]$) by Proposition~\ref{constdual}.
		For a $k$-scheme $ X$ we denote by $\chic(X/k)$ the object $\chic(\one_X/k)$
	 We write $\chic(\alpha)$ for $\chic(\alpha/k)$ when the base field $k$ is clear from the context.
\end{Def}

\begin{Rem} \label{ComplexChi}
	In the case $k = \mathbb{C}$, the rank homomorphism of quadratic forms gives an isomorphism, $rk: \GW(\mathbb{C}) \cong \mathbb{Z} $. We recover the topological Euler characteristic under this identification \[ \chic({X/\mathbb{C}}) = \chi_c^{top}(X(\mathbb{C})) = \chi^{top}(X(\mathbb{C})) . \]
	For the first equality see \cite[remark 1.5]{Le20a}. The second equality is true for every complex algebraic variety.
\end{Rem}

A useful property of the compactly supported motivic Euler characteristic is the \emph{cut-and-paste property}, which is formulated in the following proposition.

\begin{Prop} \label{cutandpaste}
	{Let $q:X\to \Spec k$ be a $k$-scheme.  Let $\alpha \in \SH(X)$ be a constructible object,}  and let \[ Z \xhookrightarrow{i} X \xhookleftarrow{j} U \] be a closed embedding and its open complement.
	Then \[ \chic(\alpha) = \chic(i^*\alpha) + \chic(j^* \alpha)  \]
and
	 \[ \chic(\alpha) = \chic(i^! \alpha ) + \chic (j_{*}j^* \alpha). \]
\end{Prop}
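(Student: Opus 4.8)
The plan is to derive both identities from the localization (gluing) triangles in $\SH(X)$ together with the additivity of the categorical Euler characteristic recorded in Proposition~\ref{May}(2) and the compatibility of the six operations with composition. For the first identity, recall that for the closed immersion $i: Z \hookrightarrow X$ with open complement $j: U \hookrightarrow X$ there is a distinguished triangle in $\SH(X)$
\[
j_! j^* \alpha \to \alpha \to i_* i^* \alpha \to j_!j^*\alpha[1].
\]
Applying the exact functor $q_!$ (where $q: X \to \Spec k$) yields a distinguished triangle in $\SH(k)$, and since $q_! j_! = (qj)_!$ and $q_! i_* = q_! i_! = (qi)_!$, this reads
\[
(qj)_! j^*\alpha \to q_!\alpha \to (qi)_! i^*\alpha \to (qj)_!j^*\alpha[1].
\]
All three terms are constructible by Proposition~\ref{constfunc}, hence strongly dualisable in $\SH(k)[1/p]$ by Proposition~\ref{constdual}, so Proposition~\ref{May}(2) applies and gives $\chi_{\SH(k)[1/p]}(q_!\alpha) = \chi_{\SH(k)[1/p]}((qj)_!j^*\alpha) + \chi_{\SH(k)[1/p]}((qi)_!i^*\alpha)$, which is precisely $\chic(\alpha) = \chic(j^*\alpha) + \chic(i^*\alpha)$ by Definition~\ref{chicdef}.

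For the second identity I would use the other localization triangle, namely
\[
i_! i^! \alpha \to \alpha \to j_* j^* \alpha \to i_!i^!\alpha[1],
\]
again distinguished in $\SH(X)$. Applying $q_!$ and using $q_! i_! = (qi)_!$ and the fact that $i_! = i_*$ for the closed immersion $i$ (so $q_! i_! i^! \alpha$ computes $\chic(i^!\alpha)$ per the convention that $\chic(\beta/k) = \chi(q'_!\beta)$ for $q': Z \to \Spec k$), we get a distinguished triangle whose outer terms have Euler characteristics $\chic(i^!\alpha)$ and $\chic(j_*j^*\alpha)$; constructibility of $j_*j^*\alpha$ follows because $j^*\alpha$ is constructible on $U$, $j$ is an open (hence smooth, separated, finite type) immersion, and one checks constructibility of $j_*j^*\alpha$ on $X$ via Proposition~\ref{constfunc} — concretely $i^*j_*j^*\alpha$ and $j^*j_*j^*\alpha = j^*\alpha$ are constructible, so the ``if and only if'' clause of Proposition~\ref{constfunc} gives the claim. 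Then Proposition~\ref{May}(1)--(2) deliver the additivity as before.

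The main subtlety I anticipate is the bookkeeping around which localization triangle to use and making sure every intermediate object is genuinely constructible so that Proposition~\ref{constdual} applies; in particular, for the second identity one must be a little careful that $\chic(j_*j^*\alpha)$ as defined here (the trace of $q_!$ applied to the object $j_*j^*\alpha \in \SH(X)$, not to $j^*\alpha \in \SH(U)$) is the right term appearing in the triangle — it is, since $j_*j^*\alpha$ already lives over $X$. A secondary point is the invocation of $i_! \cong i_*$ for closed immersions and $q_! i_! = (qi)_!$, both standard in the six-functor formalism of \cite{CD}. Beyond these compatibilities the argument is formal: localization triangle, apply $q_!$, invoke May's additivity. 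I expect no serious obstacle, only the need to state the two localization triangles correctly and to verify constructibility of the pushforward term $j_*j^*\alpha$.
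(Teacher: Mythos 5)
Your argument is correct and is essentially the paper's proof: both identities come from the two localization triangles, pushed forward by $q_!$ (using $i_*=i_!$, $j^!=j^*$), with constructibility via Proposition~\ref{constfunc}, strong dualisability via Proposition~\ref{constdual}, and additivity from Proposition~\ref{May}. The only remark is that your side-check of constructibility of $j_*j^*\alpha$ leaves the claim that $i^*j_*j^*\alpha$ is constructible unjustified, but this step is superfluous anyway, since Proposition~\ref{May}(1) applied to the pushed-forward triangle already gives strong dualisability of the term $q_!j_*j^*\alpha$ from the other two vertices.
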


\begin{proof}
	The distinguished triangle of endofunctors on $\SH(X)$	\[ j_{!}j^! \rightarrow id_{\SH(X) } \rightarrow i_{*}i^* \rightarrow  \]
	gives  a {distinguished triangle}  {of endofunctors on} $\SH(k)$ after composing with $q_!$,
	\[ q_!j_{!}j^! \rightarrow q_! \rightarrow q_! i_{*}i^* \rightarrow  \]
		Applying each of these terms to $\alpha$ gives a constructible object in $\SH(k)$ by Proposition \ref{constfunc}, which is therefore strongly dualisable in $\SH(k)[1/p]$  (Proposition \ref{constdual}). It follows from the result on the additivity of traces of May (\cite[Theorem 0.1]{May}) that the Euler characteristic $\chic(-)$ is additive on distinguished triangles. So we can apply $\chic$ here to get \[ \chic(\alpha) = \chic(j_{!}j^! \alpha ) + \chic ( i_{*}i^* \alpha) .\] 
		Since $ i_* = i_! $, $j^* = j^! $, we have
		\[ \chic(\alpha) = \chic(j^* \alpha ) + \chic (i^* \alpha). \]
		Similarly, by using the {distinguished} triangle
			\[ i_{!}i^! \rightarrow id_{\SH (X) } \rightarrow j_{*}j^* \rightarrow  \] we get
			\[ \chic(\alpha) = \chic(i^! \alpha ) + \chic (j_{*}j^* \alpha). \]
		\end{proof}

\begin{Rem}
Let $k$ be a field and let $X$ be a  $k$-scheme, $Y \subset X$ {a} closed subscheme and {$U$ the open complement $X\setminus Y$}, then from Proposition~\ref{cutandpaste} applied to $\alpha = \one_X$ we get 
$ \chic(X) = \chic (Y) + \chic(U)$.
From this relation it follows that the motivic Euler characteristic factorises through the Grothendieck ring of $k$-varieties $K_0(Var_k)$. In other words we have the following commutative diagram
	\[ \begin{tikzcd}
	\text{Var}_k \arrow[rr, "\chic(\cdot / k)"] \arrow[rd, "{X \mapsto [X]}"'] & & \GW(k)   \\
	&  K_0(Var_k) \arrow[ru, dashrightarrow]			
\end{tikzcd} .\]
\end{Rem}
This yields in the standard way a Mayer-Vietoris property with respect to a Zariski open cover for $\chic(-)$.

\begin{Prop} \label{MV} Let $k$ be a field,
	let $X$ be a $k$-scheme with a Zariski open cover $X=U_1\cup U_2$ and let $\alpha \in \SH(X)$ be a constructible object. Let $U_{12}=U_1\cap U_2$ and let  $j_1:U_1\to X$, $j_2:U_2\to X$,  and $j_{12}:U_{12}\to X$ be the inclusions. Then
\[
\chic(\alpha)=\chic(j_1^*\alpha)+\chic(j_2^*\alpha)-\chic(j_{12}^*\alpha).
\]
\end{Prop}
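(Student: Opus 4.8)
The plan is to derive the Mayer--Vietoris formula from the cut-and-paste property of Proposition~\ref{cutandpaste}, applied twice, so that the overlap term gets subtracted with the correct sign. Write $Z=X\setminus U_1$, a closed subscheme of $X$, with open complement $U_1$; let $i:Z\hookrightarrow X$ be the closed immersion. Since $X=U_1\cup U_2$, the closed set $Z$ is contained in $U_2$, and in fact $U_2 = U_{12}\sqcup Z$ (a disjoint decomposition into an open and its closed complement inside $U_2$, because $U_{12}=U_1\cap U_2$ is exactly the part of $U_2$ lying in $U_1$). This observation is the geometric heart of the argument; everything else is bookkeeping with the additivity of $\chic$.

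First I would apply Proposition~\ref{cutandpaste} to $\alpha$ on $X$ with the decomposition $Z\xhookrightarrow{i}X\xhookleftarrow{j_1}U_1$, obtaining
\[
\chic(\alpha)=\chic(i^*\alpha)+\chic(j_1^*\alpha).
\]
Here one uses that $\alpha$ constructible implies $i^*\alpha$ and $j_1^*\alpha$ are constructible (Proposition~\ref{constfunc}), so that $\chic$ of each is defined. Next I would compute $\chic(i^*\alpha)$ by restricting further to $U_2$: since $Z$ is a closed subscheme of $U_2$ with open complement $U_{12}$, applying Proposition~\ref{cutandpaste} to $j_2^*\alpha\in\SH(U_2)$ with the decomposition $Z\hookrightarrow U_2\hookleftarrow U_{12}$ gives
\[
\chic(j_2^*\alpha)=\chic(i^*\alpha)+\chic(j_{12}^*\alpha),
\]
where I use that $j_2^*\alpha$ restricted to the closed subscheme $Z$ of $U_2$ agrees with $i^*\alpha$ (both are $\alpha$ pulled back along $Z\hookrightarrow X$), and that its restriction to $U_{12}$ is $j_{12}^*\alpha$, by functoriality of $(-)^*$ and the fact that $j_{12}$ factors as $U_{12}\hookrightarrow U_2\xrightarrow{j_2}X$. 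Solving for $\chic(i^*\alpha)$ and substituting into the first equation yields
\[
\chic(\alpha)=\chic(j_2^*\alpha)-\chic(j_{12}^*\alpha)+\chic(j_1^*\alpha),
\]
which is the claimed identity.

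The only genuinely delicate point is verifying the scheme-theoretic claim $U_2=U_{12}\sqcup Z$ as an open/closed decomposition, i.e.\ that $Z=X\setminus U_1$, given a closed subscheme structure, is actually a closed subscheme of $U_2$ whose open complement in $U_2$ is precisely $U_{12}$; this is where the Zariski-cover hypothesis $X=U_1\cup U_2$ is used, and it is a straightforward point-set and scheme-theoretic check (one may take $Z$ with its reduced induced structure, since $\chic$ of a constructible sheaf pulled back to a closed subscheme depends only on the reduced structure, as $i^*$ only sees the underlying topological space up to the square-zero nilpotents that $\chic$ ignores via the cut-and-paste over the reduction). I would also remark that the argument is symmetric in $U_1$ and $U_2$, giving a consistency check, and that it reduces to the classical inclusion--exclusion for $K_0(\mathrm{Var}_k)$ when $\alpha=\one_X$.
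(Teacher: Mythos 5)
Your proposal is correct and is essentially the paper's own argument: both take $Z=X\setminus U_1=U_2\setminus U_{12}$ with its reduced structure, apply the cut-and-paste Proposition~\ref{cutandpaste} once to $\alpha$ on $X$ and once to $j_2^*\alpha$ on $U_2$, use the identification $i_2^*j_2^*\alpha\cong i^*\alpha$, and combine the two identities. The extra remarks you add (constructibility of the restrictions, insensitivity of $\chic$ to the nilpotent structure on $Z$) are harmless elaborations of points the paper leaves implicit.
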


\begin{proof} Let $Z=X\setminus U_1=U_2\setminus U_{12}$, with reduced scheme structure, and with closed immersions $i:Z\to X$, $i_2:Z\to U_2$. We have the canonical isomorphism $i_2^*j_2^*\alpha\cong i^*\alpha$, whence the identity
$
\chic(i^*\alpha)=\chic(i_2^*j_2^*\alpha)
$.
Put together with Proposition~\ref{cutandpaste} this gives the desired result.
\end{proof}

\begin{Prop}[Purity] \label{Purity} Let $i:Z\to X$ be a closed immersion of smooth $k$-schemes, or pure codimension $c$, let $f:X\to \Spec k$, $g:Z\to \Spec k$ be the structure morphisms. Then for $\alpha$ a constructible object of $\SH(k)$, we have
\[
\chic(i^! f^*\alpha )=\<-1\>^c\cdot \chic(g^* \alpha )
\]
In particular,
\[
\chic(i^!\one_X)=\<-1\>^c\chic(Z/k).
\]
\end{Prop}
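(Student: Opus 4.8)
The plan is to combine the Morel--Voevodsky relative purity isomorphism with the cut-and-paste formalism already set up (Propositions~\ref{cutandpaste} and~\ref{MV}), reducing everything to a local computation of $\chic$ for the inverse Thom space of a trivial bundle. First I would invoke relative purity (\cite{CD, Ay07a}): since $i\colon Z\to X$ is a closed immersion of smooth $k$-schemes of pure codimension $c$, there is a canonical isomorphism of functors $i^!\cong\mathcal{L}\otimes i^*(-)\colon\SH(X)\to\SH(Z)$, where $\mathcal{L}:=i^!\one_X$ is the inverse Thom space $\mathrm{Th}_Z(N_{Z/X})^{-1}$ of the rank-$c$ normal bundle. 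In particular $\mathcal{L}$ is $\otimes$-invertible and constructible (Proposition~\ref{constfunc}, using that $i$ is proper), and --- since normal bundles are Zariski-locally trivial, Thom spaces commute with $*$-pullback, and $\mathrm{Th}(\mathcal{O}^c)\cong\one(c)[2c]$ --- it is Zariski-locally on $Z$ isomorphic to $\one_Z(-c)[-2c]$. As $f\circ i=g$, so that $i^*f^*=g^*$, for a constructible $\alpha\in\SH(k)$ relative purity yields $i^!f^*\alpha\cong\mathcal{L}\otimes g^*\alpha$, again a constructible object of $\SH(Z)$ (Proposition~\ref{constfunc}).

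Next I would peel off $\alpha$. Applying $g_!$ and the projection formula gives $g_!(i^!f^*\alpha)\cong(g_!\mathcal{L})\otimes\alpha$ and $g_!g^*\alpha\cong(g_!\one_Z)\otimes\alpha$; the factors $g_!\mathcal{L}$, $g_!\one_Z$ and $\alpha$ are constructible (Proposition~\ref{constfunc}), hence strongly dualisable in $\SH(k)[1/p]$ (Proposition~\ref{constdual}), so multiplicativity of the categorical Euler characteristic (Remark~\ref{ChiMult}) gives $\chic(i^!f^*\alpha)=\chi_{\SH(k)}(g_!\mathcal{L})\otimes\chi_{\SH(k)}(\alpha)$ and $\chic(g^*\alpha)=\chic(Z/k)\otimes\chi_{\SH(k)}(\alpha)$. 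It therefore suffices to prove $\chi_{\SH(k)}(g_!\mathcal{L})=\<-1\>^c\cdot\chic(Z/k)$ --- the case $\alpha=\one_k$, which is also the asserted identity $\chic(i^!\one_X)=\<-1\>^c\chic(Z/k)$ since $f^*\one_k=\one_X$. When $N_{Z/X}$ is trivial this is immediate: then $\mathcal{L}\cong\one_Z(-c)[-2c]$, hence $g_!\mathcal{L}\cong(g_!\one_Z)(-c)[-2c]$, and Remark~\ref{ChiMult} gives $\chi_{\SH(k)}(g_!\mathcal{L})=\chic(Z/k)\otimes\chi_{\SH(k)}(\one_k(-c)[-2c])$; since $\chi_{\SH(k)}(\one_k(1)[2])=\chic(\Aa^1/k)=\<-1\>$ (\cite{Le20a}) is its own inverse in $\GW(k)$, we get $\chi_{\SH(k)}(\one_k(-c)[-2c])=\<-1\>^c$ and thus $\chi_{\SH(k)}(g_!\mathcal{L})=\<-1\>^c\chic(Z/k)$.

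For an arbitrary normal bundle I would globalize by Mayer--Vietoris. Choose a finite Zariski cover $Z=\bigcup_{l=1}^m U_l$ on which $N_{Z/X}$ becomes trivial. For any open immersion $j_U\colon U\hookrightarrow Z$ one has $j_U^*\mathcal{L}\cong\mathrm{Th}_U(N_{Z/X}|_U)^{-1}$, since Thom spaces commute with $*$-pullback; when $N_{Z/X}|_U$ is trivial this is $\cong\one_U(-c)[-2c]$, so $\chic(j_U^*\mathcal{L}/k)=\<-1\>^c\chic(U/k)$ by the computation just made. Applying Proposition~\ref{MV} --- with an induction on $m$ for covers by more than two opens --- to the constructible object $\mathcal{L}$, and separately to $\one_Z$, expresses both $\chi_{\SH(k)}(g_!\mathcal{L})=\chic(\mathcal{L}/k)$ and $\chic(Z/k)$ as one and the same inclusion--exclusion alternating sum over the nonempty intersections $U_I=\bigcap_{l\in I}U_l$. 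Since every $N_{Z/X}|_{U_I}$ is trivial, the corresponding summands differ precisely by the factor $\<-1\>^c$, and summing yields $\chi_{\SH(k)}(g_!\mathcal{L})=\<-1\>^c\chic(Z/k)$, which completes the proof.

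The main obstacle is the first step: pinning down relative purity in exactly the form $i^!f^*\alpha\cong g^*\alpha\otimes\mathrm{Th}_Z(N_{Z/X})^{-1}$ with the correct $\otimes$-invertible twist --- this twist is precisely what produces the factor $\<-1\>^c$ --- together with the computational input $\chi_{\SH(k)}(\one_k(1)[2])=\<-1\>$. Once purity is available the projection-formula and multiplicativity manipulations and the Mayer--Vietoris bookkeeping are routine, and passing to trivializing opens lets one avoid ever evaluating $\chic$ on the Thom space of a nontrivial bundle, which would otherwise demand extra input.
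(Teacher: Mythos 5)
Your proof is correct and follows essentially the same route as the paper: the relative purity isomorphism (phrased there as $i^!\circ f^*\cong\Sigma^{-\mathcal{N}_i}\circ g^*$, equivalent to your twist by $\mathcal{L}=i^!\one_X$), Mayer--Vietoris over a trivializing cover to reduce to the trivial normal bundle, the projection formula, and the identity $\chi_{\SH(k)}(\Sigma^{\pm c}_{\P^1}\gamma)=\<-1\>^c\chi_{\SH(k)}(\gamma)$. The only differences are cosmetic: you peel off $\alpha$ first and spell out the inclusion--exclusion bookkeeping, which the paper leaves implicit.
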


\begin{proof} The special case  follows from the main statement by taking $\alpha=\one_k$. We use the notation from \cite{Hoy}. Let $f:Z\to \Spec k$, $g:Z\to \Spec k$ be the structure morphisms, let $\Omega_f$, $\Omega_g$ be the respective sheaves of relative differentials, and let $\mathcal{N}_i$ be the conormal sheaf of $i$. We have the purity isomorphism (see \cite[Appendix A]{Hoy14})
\[
i^!\circ f^*\cong \Sigma^{-N_i}\circ  g^*.
\]
 Using the Mayer-Vietoris property Proposition~\ref{MV} for $\chic(-)$, we reduce to  the case of trivial conormal sheaf, $\mathcal{N}_i\cong\OO_Z^c$, inducing the natural isomorphism $\Sigma^{-\mathcal{N}_i}\cong \Sigma_{\P^1}^{-c}$.
We have the projection formula \cite[Theorem 6.18(7)]{Hoy}
\[
g_!(\Sigma^{-c}_{\P^1}\beta)\cong \Sigma^{-c}_{\P^1}g_!(\beta)
\]
for $\beta\in \SH(Z)$. Since $\Sigma^{-c}_{\P^1}\gamma\cong S^{-2c,-c}\otimes\gamma$ for 
$\gamma\in \SH(k)$, it follows from Remark~\ref{ChiMult} and \cite[Lemma 2.2]{Le20a} that
\[
\chi_{\SH(k)}(\Sigma^{-c}_{\P^1}\gamma)= \<-1\>^{-c}\cdot \chi_{\SH(k)}(\gamma)=\<-1\>^{c}\cdot \chi_{\SH(k)}(\gamma)
\]
for $\gamma\in \SH(k)$ strongly dualisable. Thus
\[
\chic(i^!f^*\alpha)=\chi_{\SH(k)}(\Sigma^{-c}_{\P^1}g_!(g^*\alpha))= 
\<-1\>^c\cdot \chic(g^*\alpha).
\]
\end{proof}

\begin{Rem}[Non-perfect fields] Let $F$ be a field of characteristic $p>2$,  and with perfect closure $F^{perf}\supset F$. Thanks to \cite[Theorem 3.2.1]{EK} we were able to define $\chic(-/F)$ in Definition~\ref{chicdef} over a non-perfect field and proceed with this definition to prove the properties. However, the base-extension $\GW(F)[1/p]\to \GW(F^{perf})[1/p]$ is an isomorphism, so we can compare $\chic(-/F)$ with $\chic(-/F^{perf})$ and use the latter to define the former. For a constructible object $\beta\in \SH(F)$, the base-extension $\beta^{perf}\in \SH(F^{perf})$ is constructible. Moreover, for an $F$-scheme $q:X\to \Spec F$ and an element $\alpha\in \SH(X)$, we have the base-change $q^{perf}:X\times_{\Spec F}\Spec F^{perf}\to\Spec F^{perf}$ and  $\alpha^{perf}\in  \SH(X)$, with $q^{perf}_!(\alpha^{perf})$ canonically isomorphic to the base-change $q_!(\alpha)^{perf}$ of $q_!(\alpha)$. Thus, we may define $\chic(\alpha/F)$ also by
\[
\chic(\alpha/F):=\chic(\alpha^{perf}/F^{perf})\in \GW(F^{perf})[1/p]=\GW(F)[1/p].
\]
This agrees with Definition~\ref{chicdef} through the isomorphism $\SH(F)[1/p] \simeq \SH(F^{perf})[1/p]$ of \cite[Theorem 2.1.1]{EK}; see also \cite[Section 5.1, p. 45]{Le20b} and \cite[Section 2, p. 2185]{Le20a} for similar discussions and for the passage from $\GW(F)[1/p]$ to $\GW(F)$.
\end{Rem}

{Another useful formula concerns change of base field. For $k_1\subset k_2$ a finite separable field extension, we have the transfer map on the Grothendieck-Witt rings
\[
\Tr_{k_2/k_1}:\GW(k_2)\to \GW(k_1).
\]
This is the so-called {\em Scharlau transfer}\footnote{The Scharlau transfer for the Witt groups is discussed, for example, in \cite[Chapter 2, Section 5]{Scharlau}; the same construction works for the Grothendieck-Witt groups.}  with respect to the trace map $\Tr_{k_2/k_1}:k_2\to k_1$ and is defined as follows.  For a finite-dimensional $k_2$-vector space $V$ and a non-degenerate symmetric $k_2$-bilinear map $b:V\times V\to k_2$, one considers $V$ as a (finite-dimensional) $k_1$-vector space, giving the
symmetric $k_1$-bilinear map  $\Tr_{k_2/k_1}\circ b:V\times V\to k_1$; the fact that $k_2$ is separable over $k_1$ implies that $\Tr_{k_2/k_1}$ is surjective and hence $\Tr_{k_2/k_1}\circ b$ is non-degenerate. Sending $b$ to $\Tr_{k_2/k_1}\circ b$ defines the map $\Tr_{k_2/k_1}:\GW(k_2)\to \GW(k_1)$. 
}

\begin{Prop} \label{Transfer} Let $k_1\subset k_2$ be a finite separable extension of fields, let $\pi:\Spec k_2\to \Spec k_1$ be the induced morphism, and let $f:X\to \Spec k_2$ be a $k_2$-scheme, which we consider as $k_1$-scheme via composition with $\pi$.  For a constructible object $\alpha\in \SH(X)$ we have
\[
\chic(\alpha/k_1)=\Tr_{k_2/k_1}(\chic(\alpha/k_2))\in \GW(k_1).
\]
\end{Prop}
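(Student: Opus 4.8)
The plan is to reduce everything to the case of the structure morphisms and then invoke the compatibility of the transfer map with the six-functor formalism. Write $q_2 = f : X \to \Spec k_2$ and $q_1 = \pi \circ f : X \to \Spec k_1$, and let $\pi: \Spec k_2 \to \Spec k_1$ be the finite separable morphism. First I would observe that $q_{1!} = \pi_! \circ q_{2!}$, so that $\chic(\alpha/k_1)$ is the categorical Euler characteristic in $\SH(k_1)[1/p]$ of $\pi_!(q_{2!}\alpha)$, while $\chic(\alpha/k_2)$ is the categorical Euler characteristic in $\SH(k_2)[1/p]$ of the constructible object $\beta := q_{2!}\alpha$. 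Thus the proposition is equivalent to the statement that for every constructible (hence strongly dualisable) object $\beta \in \SH(k_2)[1/p]$,
\[
\chi_{\SH(k_1)[1/p]}(\pi_!\beta) = \Tr_{k_2/k_1}\bigl(\chi_{\SH(k_2)[1/p]}(\beta)\bigr).
\]
This is now a purely categorical statement about the finite (in fact finite \'etale) morphism $\pi$, with no further reference to $X$.

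To prove this reduced statement, the key input is that $\pi$ is finite \'etale, so $\pi_! = \pi_*$ is both left and right adjoint to $\pi^*$ up to a twist; more precisely, since $\pi$ is \'etale one has $\pi_! = \pi_\#$, and $\pi_\#$ is left adjoint to $\pi^*$ while $\pi_*$ is right adjoint to $\pi^*$, and for finite \'etale $\pi$ these agree. The standard fact (see e.g. the treatment of Euler characteristics and transfers in \cite{Le20a}, or Hoyois' work) is that the categorical trace is compatible with $\pi_\#$: for a dualisable $\beta\in\SH(k_2)$, the object $\pi_\#\beta$ is dualisable in $\SH(k_1)$ with dual $\pi_\#(\beta^\vee)$, and the composition $\one_{k_1} \to \pi_\#\beta\otimes\pi_\#\beta^\vee \to \pi_\#\beta^\vee\otimes\pi_\#\beta \to \one_{k_1}$ computing $\chi_{\SH(k_1)}(\pi_\#\beta)$ factors, after applying $\pi_\#$ to the co-evaluation/evaluation of $\beta$ and using the projection-formula isomorphism $\pi_\#(\beta\otimes\pi^*\gamma)\cong \pi_\#\beta\otimes\gamma$ together with the unit $\one_{k_1}\to\pi_\#\pi^*\one_{k_1} = \pi_\#\one_{k_2}$ and counit $\pi_\#\one_{k_2} = \pi_!\pi^!\one_{k_1}\to \one_{k_1}$, through $\pi_\#$ of the trace of $\mathrm{id}_\beta$. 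Concretely, $\chi_{\SH(k_1)}(\pi_\#\beta)$ equals the image of $\chi_{\SH(k_2)}(\beta)\in\End(\one_{k_2})$ under the composite $\End_{\SH(k_2)}(\one_{k_2})\to\End_{\SH(k_1)}(\pi_\#\one_{k_2})\xrightarrow{\epsilon}\End_{\SH(k_1)}(\one_{k_1})$ induced by $\pi_\#$ and the counit $\epsilon:\pi_\#\one_{k_2}\to\one_{k_1}$ — that is, the transfer map on endomorphism rings of the units. The final step is to identify this categorical transfer $\End_{\SH(k_2)}(\one_{k_2})\to\End_{\SH(k_1)}(\one_{k_1})$, under the Morel isomorphisms $\End_{\SH(k_i)}(\one_{k_i})\cong\GW(k_i)$, with the Scharlau transfer $\Tr_{k_2/k_1}:\GW(k_2)\to\GW(k_1)$; this is a known compatibility, due in this form to Hoyois and Morel (the transfer in $\SH$ along finite \'etale maps realises the Scharlau transfer on $\GW$), and can be cited.

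I expect the main obstacle to be the second step: carefully justifying that the categorical trace commutes with $\pi_\#$ in the precise sense that $\chi_{\SH(k_1)}(\pi_\#\beta)$ is the transfer (along the counit $\pi_\#\one_{k_2}\to\one_{k_1}$) of $\chi_{\SH(k_2)}(\beta)$. This requires the projection formula for $\pi_\#$, the fact that $\pi^*$ is monoidal, and a diagram-chase showing that the co-evaluation and evaluation data for $\pi_\#\beta$ can be taken to be $\pi_\#$ applied to those for $\beta$ (composed with unit/counit of the $(\pi_\#,\pi^*)$ and $(\pi_*,\pi^!)$ adjunctions and the canonical $\pi_!\pi^!\to\mathrm{id}$). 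An alternative, possibly cleaner route that avoids constructing the trace by hand is to use the additivity of $\chic$ (Proposition~\ref{May}, Proposition~\ref{cutandpaste}) to reduce to the case $\alpha=\one_X$ with $X$ smooth over $k_2$, then to the generating case $X = $ an affine space bundle or even $X=\Spec k_2$ itself by the cut-and-paste and Mayer–Vietoris properties, where the statement $\chic(\Spec k_2/k_1) = \Tr_{k_2/k_1}(\langle 1\rangle) = \Tr_{k_2/k_1}(\chic(\Spec k_2/k_2))$ is exactly the classical computation that the transfer of the unit form is the trace form of the extension — but since $\SH(X)$ contains general constructible $\alpha$ not of the form $\one_X$, the device-free categorical argument above is the more robust one, and I would present that.
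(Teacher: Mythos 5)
Your proposal is correct and follows essentially the same route as the paper: the paper's proof is precisely your first reduction, namely the canonical isomorphism $(\pi\circ f)_!\cong \pi_!\circ f_!$, combined with a citation of Hoyois \cite[Proposition 5.2]{Hoy14}, which is exactly the compatibility of the categorical trace with pushforward along the finite separable $\pi$ (realising the Scharlau transfer on $\GW$) that you identify as the key input and sketch. Your additional diagram-chase outline of that cited fact is reasonable but not needed, since the paper simply invokes Hoyois' result.
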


\begin{proof} This is \cite[Proposition 5.2]{Hoy14} combined with the canonical isomorphism $(\pi\circ f)_!\cong \pi_!\circ f_!$.
\end{proof}

\section{Motivic nearby cycles and semi-stable reduction}

\subsection{Ayoub's motivic nearby cycles functor}\label{SectionNearbyCycles}
Throughout the paper we fix a discrete valuation ring $\OO$ with residue field $k$,  fraction field $K$ and fixed uniformizer $t\in \OO$;  $\sigma$ denotes the closed point $\Spec k$ and  $\eta$ the generic point $\Spec K$. We define $B$ to be $\Spec \OO$. We will assume in addition that $\OO$ contains a subfield $k_0$ such that $B$ is smooth and essentially of finite type over $k_0$, and  the field extension $k_0\to k$ is finite and separable.

{Let $f:X\to B$ be a flat, quasi-projective $B$-scheme. We have the open-closed embedding  $ \sigma \xhookrightarrow{i} B \xhookleftarrow{j} \eta $, with the closed immersion $i$ and the open immersion $j$. Denote the respective pullbacks {by} $X_{\sigma}$, $X_{\eta}$ ('the special and the generic fibre') and {denote} the maps {induced by $f$} according to the following diagram
\[
\begin{tikzcd}[]
X_{\sigma} \arrow[r, hook] \arrow[d, "f_\sigma"] & X  \arrow[d, "f"] & X_{\eta} \arrow[l, hook'] \arrow[d, "f_{\eta}"] \\
\sigma \arrow[r, hook, "i"] & B  & \eta \arrow[l, hook', "j" ']
\end{tikzcd} . \]
For the construction of the motivic nearby cycles functor 
\[ \Psi_f : \SH(X_{\eta}) \rightarrow \SH(X_{\sigma})\]
see \cite[3.2.1]{Ay07a}.  {Fixing the parameter} $t$ defines a map $t: \Spec \OO \rightarrow \Spec k_0[t]$. By abuse of notation we use $\Psi_f$ also to denote $\Psi_{t \circ f}$, with the base being $\Aa_{k_0}^1$. We will use some of the compatibility properties satisfied by $\Psi_{(-)}$, among which is the following.

\begin{Proper}[see \hbox{\cite[Definition 3.1.1]{Ay07a}}] \label{property}

For {each} morphism $ g: Y \rightarrow X $, {of flat quasi-projective $B$-schemes,} there are well-defined natural transformations \[ \alpha_g : g_{\sigma}^* \circ \Psi_f \rightarrow \Psi_{f\circ g} \circ g_{\eta}^* \]
and
\[ \beta_g : \Psi_f \circ g_{\eta *} \rightarrow g_{\sigma *} \circ \Psi_{f \circ g} \]
 such that:
\begin{enumerate}[label=(\alph*)]
	\item
  If $g$ is smooth $\alpha_g$ is {natural isomorphism}.
  \item
  if $g$ is projective then $\beta_g$ is an {natural isomorphism}.
 \end{enumerate}
\end{Proper}
 These natural transformations satisfy some compatibility conditions, for details check \cite[3.1.1, 3.1.2]{Ay07a}
{The next result is a very useful tool for computing $\Psi_f$.} {\begin{Not} Let $X$ be a smooth $k_0$-scheme, $D$ a simple normal crossing divisor on $X$ with 
irreducible components $D_1,\ldots, D_r$. For $I\subset \{1,\ldots, r\}$, let $D_I:=\cap_{i\in I}D_i$, $D_I^\circ:=\cap_{i\in I}D_i\setminus\cup_{j\not\in I}D_j$,  $D_{(I)}:=\cup_{i\in I}D_i$, and  $D_{(I)}^\circ:=D_{(I)}\setminus\cup_{j\not\in I}D_j$.  
\end{Not}}

\begin{Prop}[\hbox{\cite[Th\'eor\`eme 3.3.44]{Ay07a}}] \label{computability}
 Let $f : X \rightarrow B$ be a flat quasi-projective $B$-scheme. Suppose that $X$ is smooth over $k_0$ and that  $X_\sigma := f^{-1}(0)$ is a simple normal crossing divisor (in particular, reduced) with irreducible components $D_1, \ldots, D_r$. Fix a non-empty subset $I\subset \{1,\ldots, r\}$,   let $D_{(I)}^\circ \xhookrightarrow{v} D_{(I)} \xhookrightarrow{u} X_\sigma$ denote the respective open and closed immersions. \\
	Then composing $u^* \Psi_f f_\eta ^*$ with the unit map  $id \rightarrow v_* v^* $ of the adjunction, induces {a natural isomorphism}
	\[ u^* \Psi_f f_\eta ^* \simeq v_* v^* u^* \Psi_f f_\eta ^* .  \] 
\end{Prop}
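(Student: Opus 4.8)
\noindent\emph{Proof plan.} First I would reformulate the claim. Let $w\colon Z:=D_{(I)}\setminus D_{(I)}^\circ\hookrightarrow D_{(I)}$ be the closed immersion complementary to $v$. The morphism in the statement is obtained by applying the unit $\id\to v_*v^*$ to $u^*\Psi_f f_\eta^*$, so the cofibre of this morphism is $w_*w^!\,u^*\Psi_f f_\eta^*[1]$ by the localisation triangle $w_*w^!\to\id\to v_*v^*$; since $w_*$ is fully faithful, the proposition is equivalent to
\[
w^!\,u^*\,\Psi_f\,f_\eta^*=0 .
\]
Moreover it suffices to check this on the unit $\one_\eta\in\SH(\eta)$: by the projection formula for $\Psi_f$ and its compatibility with pullbacks from the base, $\Psi_f(f_\eta^*\mathcal F)$ is functorially the tensor product of $\Psi_f(\one_{X_\eta})$ with a pullback of $\mathcal F$, and writing a general $\mathcal F$ as a filtered colimit of constructible (hence, by Proposition~\ref{constdual}, strongly dualisable) objects, the projection formula for the closed immersion $w$ deduces the vanishing for $f_\eta^*\mathcal F$ from that for $\one_{X_\eta}$.

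Next I would localise on $X$. The vanishing $w^!u^*\Psi_f(\one_{X_\eta})=0$ may be tested Nisnevich-locally, since $u^*,v^*,w^!$ commute with étale pullback by base change, $v_*$ by smooth base change, $\Psi_{(-)}$ by Property~\ref{property}(a), and a morphism in $\SH(-)$ that is an isomorphism Nisnevich-locally is an isomorphism. A reduced normal crossing divisor in a smooth $k_0$-scheme is étale-locally a union of coordinate hyperplanes, so I may assume $X=\Aa^m_{k_0}$, $B=\Aa^1_{k_0}$, $f=t_1\cdots t_m$ (which represents the divisor $X_\sigma$ up to a unit, hence has the same $\Psi$), $D_i=\{t_i=0\}$, and, after renumbering, $I=\{1,\dots,a\}$ with $1\le a<m$; the extra affine factors of the étale chart are discarded because all the functors in play commute with pullback along the smooth, split projection onto $\Aa^m_{k_0}$.

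It remains to treat this model. On $D_{(I)}^\circ=\{t_1\cdots t_a=0,\ t_{a+1}\cdots t_m\ne0\}$ the coordinates $t_{a+1},\dots,t_m$ are invertible, so $f$ there agrees up to a unit with $f_1:=t_1\cdots t_a$, and Property~\ref{property}(a) for the smooth projection $\Aa^a_{k_0}\times\mathbb{G}_m^{m-a}\to\Aa^a_{k_0}$ identifies $u^*\Psi_f(\one_{X_\eta})|_{D_{(I)}^\circ}$ with $\mathrm{pr}^*\Psi_{f_1}(\one)|_{D_{(I)}^\circ}$. To compare this with $u^*\Psi_f(\one_{X_\eta})$ over all of $D_{(I)}$, I would use the local analysis of the nearby cycles of coordinate crossings in \cite[\S3.3]{Ay07a}: the restriction of $\Psi_f(\one_{X_\eta})$ to each stratum $D_J^\circ$ (with $J\cap I\ne\emptyset$), along which $f$ restricts to a product of $|J|$ coordinates, is the pullback of the ``exterior algebra on the logarithm'' attached to those coordinates, and computing $v_*v^*$ of $\mathrm{pr}^*\Psi_{f_1}(\one)$ stalkwise along $D_J^\circ$ — via the cohomology of the tori $\mathbb{G}_m^{|J\setminus I|}$ — reproduces exactly this, compatibly with the unit map; alternatively one can deduce the comparison from a Thom--Sebastiani-type multiplicativity for $\Psi$ under products of functions, reducing to the single-coordinate identity $\Psi_t(\one)\simeq\one$. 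Either way $w^!u^*\Psi_f(\one_{X_\eta})=0$, and unwinding the reductions gives the proposition.

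The hard part is this last step: extracting from \cite{Ay07a} the explicit local shape of $\Psi_f(\one)$ along $D_{(I)}$ and making the comparison with $v_*v^*$ genuinely functorial rather than merely stalkwise. That forces one to work directly with the construction of $\Psi$ as a colimit over the Kummer covers $t\mapsto t^{1/n}$ together with the logarithm object, and with the compatibility of that colimit with the six operations; the two preliminary reductions, by contrast, are routine.
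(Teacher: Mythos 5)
Your two preliminary reductions are fine in outline (the localisation triangle $w_*w^!\to\id\to v_*v^*$ does convert the statement into the vanishing $w^!u^*\Psi_f f_\eta^*=0$, and an étale/Nisnevich-local reduction to the coordinate-crossing model is exactly how one wants to proceed, modulo the diagram chase checking that the smooth base-change isomorphisms intertwine the unit maps $\id\to v_*v^*$, which you elide). But the heart of the argument is missing, and you say so yourself: for the model $f=t_1\cdots t_m$ on $\Aa^m_{k_0}$ you offer only a gesture at an "exterior algebra on the logarithm" description of $\Psi_f(\one)$ along strata, compared with $v_*v^*$ "stalkwise", or alternatively an appeal to a Thom--Sebastiani-type multiplicativity of $\Psi$. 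Neither is available as stated: stalkwise comparisons do not determine morphisms in $\SH(-)$, and a Thom--Sebastiani theorem for Ayoub's motivic nearby cycles is not in \cite{Ay07a} and is itself a substantial result. What your reductions actually accomplish is to reduce the proposition to precisely the content of Ayoub's Th\'eor\`eme 3.3.10 (the case of the standard scheme $B[T_1,\ldots,T_m]/(T_1\cdots T_m-t)$), which is where all the work lies — it is proved there by a delicate analysis of the construction of $\Psi$ via the Kummer tower, not by the shortcuts you sketch. The paper does not reprove this: Proposition~\ref{computability} is quoted from \cite[Th\'eor\`eme 3.3.44]{Ay07a} (see the remark following it, which notes that the case needed reduces to Th\'eor\`eme 3.3.10), so the step you leave open is exactly the cited theorem.

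A secondary gap: your reduction to the unit object relies on the claim that $\Psi_f(f_\eta^*\mathcal F)$ is functorially $\Psi_f(\one_{X_\eta})$ tensored with a pullback of (the nearby cycle of) $\mathcal F$, plus commutation of $\Psi_f$, $w^!$ and $v_*$ with filtered colimits. The projection-formula-type statement for $\Psi_f$ against pullbacks from the base is not formal; Ayoub's monoidality results for $\Psi$ come with hypotheses (e.g.\ characteristic zero, or constructibility), and the colimit commutations need the compact-generation bookkeeping spelled out. Since the proposition is asserted for arbitrary objects of $\SH(\eta)$, this reduction needs an argument rather than a one-line appeal to "the projection formula". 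If you instead simply cite Ayoub's theorem for the local model, both of these issues disappear, and your remaining text becomes the (correct, and essentially the intended) gluing argument.
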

For the rest of the section we fix $I$ and let $D:=D_{(I)}$, $D^\circ:=D_{(I)}^\circ$. {For $i:Z\to Y$ the inclusion of a locally closed subscheme, and $\alpha\in \SH(Y)$, we sometimes write $\alpha|_Z$ for $i^*(\alpha)\in \SH(Z)$.}
\begin{Rem}\label{Rem:computability} {We retain the notation from Proposition~\ref{computability}.
Evaluating at $\one_\eta \in \SH (\eta) $ and formulating the statement slightly differently, we have
\[ (\Psi_f(\one_{X_{\eta}}))|_D = v_* (\Psi_f(\one_{X_{\eta}})|_{D^\circ}). \]
Here $(\Psi_f(\one_{X_{\eta}}))|_D$ denotes the pullback  $u^*\Psi_f(\one_{X_{\eta}})\in \SH(D)$ via the inclusion $u:D\to X_\sigma$, and similarly $\Psi_f(\one_{X_{\eta}})|_{D^\circ}:=v^*u^* \Psi_f(\one_{X_{\eta}})\in \SH(D^\circ)$.} \\
Moreover, {taking $I=\{i\}$, we get}
	\[   
(\Psi_f(\one_{X_{\eta}}))|_{D^\circ} = w^* \Psi_{id}(\one_B)=w^* (\one_\sigma)=\one_{D^\circ} \] where $w:D^\circ\to \sigma$ is the structure morphism. This last statement follows from the compatibility of $\Psi_{(-)}$ with smooth pullback, Property~\ref{property}, applied to the open immersion $X\setminus\cup_{j\neq i}D_j\hookrightarrow X$ and then to the smooth morphism 
$X\setminus\cup_{j\neq i}D_j\to B$. In addition, the identity $\Psi_{id}(\one_B)=\one_\sigma$ follows from \cite[Proposition 3.4.9, Lemma 3.5.10]{Ay07a}.
	\end{Rem}

\begin{Rem}
The statement of the theorem appears in \cite[Th\'eor\`eme 3.3.10, Remarque 3.3.12]{Ay07a} for the case 
$
X= B[T_1,\ldots,T_k]/(T_1\cdot\ldots\cdot T_k - t)
$, and $f$ the obvious morphism to $B$. In \cite[Th\'eor\`eme 3.3.44]{Ay07a} the statement is essentially the same as in our Proposition~\ref{computability}, with the assumption $I=\{i\}$. This special case is in fact all we need to use later on.
 \end{Rem}

\subsection{The Euler characteristic of nearby cycles}\label{SectionNearbyCyclesEulerChar}

Retain the notation of $\OO$ and $B=\Spec\OO$ as  in Section~\ref{SectionNearbyCycles}. Let $f:X\to B$ be a flat quasi-projective morphism with  $X$ smooth over $k_0$  and $X_\eta$  smooth over $\eta$. We make here some first computations {of} $\chic(\Psi_f(\one_{X_\sigma}))$. In what follows we will assume that $k_0$ is of characteristic $0$ in order to have the result of the proposition below; alternatively we may assume the result, namely that $ f_{\sigma !}\Psi_f\one $ is a strongly dualisable object in $\SH(\sigma)$.

\begin{Prop} Assume that the base field $k_0$ is of characteristic $0$. Then:
\item[ (1)]	$ f_{\sigma !}\Psi_f(\one_{X_{\eta}}) $ is a strongly dualisable object in $\SH(k)$.
\item[ (2)] 	 $ \chic(\Psi_f(\one_{X_{\eta}})) \in \GW(k) $ is well-defined.
\end{Prop}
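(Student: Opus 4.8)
The plan is to reduce both statements to the single assertion that $\Psi_f(\one_{X_\eta})$ is a constructible object of $\SH(X_\sigma)$; granting that, everything is formal. Indeed, the structure map $f_\sigma\colon X_\sigma\to\Spec k$ is separated of finite type, so $f_{\sigma!}\Psi_f(\one_{X_\eta})$ is constructible in $\SH(k)$ by Proposition~\ref{constfunc}, hence strongly dualisable in $\SH(k)[1/p]$ by Proposition~\ref{constdual}; this gives (1) (and strong dualisability in $\SH(k)$ itself when $\operatorname{char}k=0$, as then $p=1$). Assertion (2) is then immediate from Definition~\ref{chicdef}: one sets $\chic(\Psi_f(\one_{X_\eta})):=\chi_{\SH(k)[1/p]}(f_{\sigma!}\Psi_f(\one_{X_\eta}))$, which is a well-defined element of $\GW(k)$ (more precisely of $\GW(k)[1/p]$ in positive characteristic $p$) via the Morel identification of $\End_{\SH(k)}(\one_k)$ with $\GW(k)$.

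So the content is the constructibility of $\Psi_f(\one_{X_\eta})$. Since $\one_{X_\eta}$ is tautologically constructible, it suffices to know that Ayoub's functor $\Psi_f$ sends constructible objects to constructible objects, and I would get this by reducing to a semi-stable model. By Property~\ref{property}(b), for a projective $B$-morphism $g\colon X'\to X$ one has $\Psi_f\circ g_{\eta*}\cong g_{\sigma*}\circ\Psi_{f\circ g}$; choosing $g$ to be the blow-up, base-change and normalisation used throughout the paper — so that $g_\eta$ is an isomorphism and $X'_\sigma$ is a normal crossing divisor $\sum_i D_i$ — yields $\Psi_f(\one_{X_\eta})\cong g_{\sigma*}\Psi_{f\circ g}(\one_{X'_\eta})$, and $g_{\sigma*}=g_{\sigma!}$ preserves constructibility by Proposition~\ref{constfunc}. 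We are thus reduced to the normal crossing case. In that case I would induct on the filtration of $X'_\sigma$ by the closed subsets $Z^{(m)}=\bigcup_{|I|=m}D_I$, using the open--closed criterion of Proposition~\ref{constfunc}: an object of $\SH(X'_\sigma)$ is constructible as soon as its restriction to each locally closed stratum $D_I^\circ=D_I\setminus\bigcup_{j\notin I}D_j$ is. For $|I|=1$ the restriction $\Psi_{f\circ g}(\one_{X'_\eta})|_{D_i^\circ}$ is the unit $\one_{D_i^\circ}$ by Remark~\ref{Rem:computability}; on the deeper strata it is computed, via Ayoub's local model $\Spec\OO[T_1,\ldots,T_m]/(T_1\cdots T_m-t)$ and the smooth-pullback compatibility $\alpha_g$ of Property~\ref{property}(a) (compare Proposition~\ref{AIS}), to be a $\mathbb{G}_m$-type object — constructible in any case. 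Assembling these along the stratification gives constructibility of $\Psi_{f\circ g}(\one_{X'_\eta})$, hence of $\Psi_f(\one_{X_\eta})$.

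The main obstacle is precisely this last point — getting a hold of the nearby cycles of the unit on a semi-stable model, which is what Proposition~\ref{computability} and the surrounding machinery are designed for; deducing (1) and (2) from constructibility is routine bookkeeping with the six operations. One subtlety worth flagging: making $X'_\sigma$ a \emph{reduced} normal crossing divisor in general requires, beyond a blow-up, a ramified base change $\OO\to\OO'$ followed by normalisation, so one must either invoke compatibility of $\Psi$ with such base changes to transport constructibility back to $B$, or work with a possibly non-reduced normal crossing divisor and the corresponding variant of Proposition~\ref{computability}; in characteristic zero, resolution of singularities makes the reduction clean, whereas in positive characteristic one would instead use de Jong alterations together with a transfer argument.
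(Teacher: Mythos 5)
Your formal reduction is exactly the paper's: once $\Psi_f(\one_{X_\eta})$ is known to be constructible, stability of constructibles under $f_{\sigma!}$ (Proposition~\ref{constfunc}) plus Proposition~\ref{constdual} gives (1), and (2) is then just Definition~\ref{chicdef}. Where you diverge is in how you obtain that constructibility. The paper does not prove it at all: it cites Ayoub's general theorem that $\Psi_f$ preserves constructible objects (\cite[Th\'eor\`eme 3.5.14]{Ay07a}), together with \cite[Corollaire 2.2.20]{Ay07a} for stability under $(-)_!$. That citation works for any flat quasi-projective $f$, in any characteristic, with no semi-stable model, no resolution and no tameness hypothesis. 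Your route instead re-derives a special case of Ayoub's theorem by passing to a semi-stable reduction and arguing stratum by stratum. This is a legitimate alternative in spirit, but as written it is both heavier and thinner than what is needed: the passage through the ramified base change $\OO\to\OO_e$ is not covered by Property~\ref{property}(b) alone (the base change is not a $B$-morphism; one needs the compatibility $\Psi_f\simeq p_{\sigma*}\circ\Psi_{f_e}\circ p_\eta^*$ of \cite[Proposition 3.5.9]{Ay07a}, as in Proposition~\ref{prop:SStableRed}, which you only gesture at); the claim that the restriction of $\Psi_{f\circ g}(\one)$ to the deeper strata is ``a $\mathbb{G}_m$-type object'' is asserted rather than proved and is exactly the kind of computation Ayoub's local-model analysis supplies; and in positive characteristic the existence of a suitable model via alterations plus a transfer argument is left entirely open. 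So the proposition is correctly reduced, but your replacement for the one genuinely non-formal input amounts to an incomplete reproof of a theorem the paper simply quotes; citing \cite[Th\'eor\`eme 3.5.14]{Ay07a} is both shorter and strictly more general.
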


\begin{proof}
	For the first assertion, $\Psi_f$ sends constructible objects to constructible objects \cite[Th\'eor\`eme 3.5.14]{Ay07a} and constructibles are stable under the exceptional pushforward functor $(-)_{!}$ \cite[Corollaire 2.2.20]{Ay07a}, hence
	$ f_{\sigma !}\Psi_f(\one_{X_{\eta}}) $ is constructible and therefore strongly dualisable (Proposition \ref{constdual}). \\
		(2) follows from (1) and Definition \ref{chicdef}.
\end{proof}

By formal consequence of the properties of $\Psi_f$, the only non-trivial part of $\Psi_f \one$ is at the singular locus, and as the following proposition shows one can compute $\chic(\Psi_f \one)$ by just investigating $\Psi_f$ around isolated singularities.

\begin{Prop} \label{local} \cite[Proposition 8.3]{Le20b} 
	Assume $P=\{p_1,\ldots,p_s\}$ is the (finite) set of singular points in $X_\sigma $. Then
	\[ \chic(\Psi_f({\one_X}_\eta)) = \sum_i \chic(\Psi_f({\one_X}_\eta)|_{p_i}) + \chic(X_\sigma \setminus P)  \]
\end{Prop}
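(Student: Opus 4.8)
The plan is to reduce everything to the cut-and-paste property of $\chic$ (Proposition~\ref{cutandpaste}), applied not to $\one_X$ but to the constructible object $\Psi_f(\one_{X_\eta})\in \SH(X_\sigma)$. First I would set $P=\{p_1,\ldots,p_s\}\subset X_\sigma$ with its reduced structure, let $i:P\hookrightarrow X_\sigma$ be the closed immersion and $j:X_\sigma\setminus P\hookrightarrow X_\sigma$ the open complement. Since $\Psi_f$ preserves constructibility (\cite[Th\'eor\`eme 3.5.14]{Ay07a}), the object $\alpha:=\Psi_f(\one_{X_\eta})$ is constructible, so Proposition~\ref{cutandpaste} applies and gives
\[
\chic(\alpha)=\chic(i^*\alpha)+\chic(j^*\alpha).
\]
Here $\chic(\alpha)=\chic(\Psi_f(\one_{X_\eta})/k)$ by definition, and $\chic(j^*\alpha)=\chic(j^*\Psi_f(\one_{X_\eta})/k)$.

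The next step is to identify the two terms on the right with the terms in the statement. For the open term: $X_\sigma\setminus P$ is the smooth locus of the special fibre, and on a neighbourhood of it $f$ is smooth, so the compatibility of $\Psi_{(-)}$ with smooth pullback (Property~\ref{property}(a)) together with $\Psi_{\id}(\one_B)=\one_\sigma$ — exactly as recorded in Remark~\ref{Rem:computability} — yields $j^*\Psi_f(\one_{X_\eta})\cong \one_{X_\sigma\setminus P}$. Hence $\chic(j^*\alpha)=\chic(\one_{X_\sigma\setminus P}/k)=\chic(X_\sigma\setminus P)$. For the closed term: since $P$ is a disjoint union of the reduced points $p_1,\ldots,p_s$, $\chic$ is additive over this disjoint union (cut-and-paste again, or just that $i^*=\bigsqcup i_{p_\ell}^*$), so $\chic(i^*\alpha)=\sum_\ell \chic(i_{p_\ell}^*\alpha/k)=\sum_\ell \chic(\Psi_f(\one_{X_\eta})|_{p_\ell})$, using the notation $\alpha|_{p_\ell}=i_{p_\ell}^*\alpha$ introduced before Remark~\ref{Rem:computability}. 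Combining the three displays gives precisely
\[
\chic(\Psi_f(\one_{X_\eta})) = \sum_{\ell=1}^s \chic(\Psi_f(\one_{X_\eta})|_{p_\ell}) + \chic(X_\sigma\setminus P).
\]

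I do not expect a genuine obstacle here; the content is essentially formal once one is willing to apply the $\chic$-additivity machinery to a general constructible coefficient object rather than to $\one$. The only points requiring a little care are: (i) checking that $\Psi_f(\one_{X_\eta})$ really is constructible so that all the strong-dualisability hypotheses in Proposition~\ref{cutandpaste} are met — this is cited from Ayoub; and (ii) being careful that the residue fields $k(p_\ell)$ may be nontrivial finite separable extensions of $k$, so that each $\chic(\Psi_f(\one_{X_\eta})|_{p_\ell})$ is computed over $k$ (implicitly via the transfer $\Tr_{k(p_\ell)/k}$ of Proposition~\ref{Transfer}), but this is absorbed into the notation $\chic(-|_{p_\ell})=\chic(-|_{p_\ell}/k)$. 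So the mild ``hard part'' is purely bookkeeping: making sure the base field over which each Euler characteristic is taken is consistently $k$.
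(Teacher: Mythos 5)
Your proposal is correct and follows essentially the same route as the paper: cut-and-paste (Proposition~\ref{cutandpaste}) applied to the constructible object $\Psi_f(\one_{X_\eta})$ for the decomposition $P \hookrightarrow X_\sigma \hookleftarrow X_\sigma\setminus P$, together with the smooth-pullback compatibility of $\Psi_{(-)}$ (Property~\ref{property}) to identify the restriction over $X_\sigma\setminus P$ with the unit. The only cosmetic difference is the order of the two steps, and your remarks on constructibility and residue fields are consistent with how the paper handles them.
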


\begin{proof}
	 Denote by $j: X \setminus P  \hookrightarrow X $, then by Property \ref{property},
	\[\Psi_f ({\one_X}_{\eta})|_{X_\sigma \setminus P} \simeq \Psi_{f\circ j} ( j_\eta ^* \one_X )=\Psi_{f\circ j} ( ({\one_{X \setminus P})}_\eta ) = \one_{X_\sigma \setminus P } \]
	the last equality being since $X \setminus P $ is smooth (e.g. by Proposition \ref{computability}).	Then by cut-and paste (Proposition \ref{cutandpaste})
	\[ \chic(\Psi_f({\one_X}_\eta)) = \sum_i \chic(\Psi_f({\one_X}_\eta)|_{p_i}) + \chic(\Psi_f ({\one_X}_{\eta})|_{X_\sigma \setminus P})  \]
		and we get the desired result.
\end{proof}

The following example illustrates how we can use Proposition \ref{computability} to compute $\chic(\Psi_f \one)$ on a simple normal crossing divisor {stratum by stratum}.

\begin{Ex} \label{example}
	Suppose $X_\sigma$ is a simple normal crossing divisor on $X$ that can be written as $X_\sigma = D_1 + D_2$ with $D_1$ and $D_2$ smooth over $\sigma$ and with transverse intersection $D_{12}:=D_1\cap D_2$.  Let $D_i^\circ:=D_i\setminus D_{12}$, $i=1,2$.	We have the closed-open complements 
	\[ D_1 \xhookrightarrow{u_1} X_\sigma \xhookleftarrow{j} D_2^\circ . \]
Then by Proposition \ref{cutandpaste}
	\[ \chic(\Psi_f(\one_{X_{\eta}})) = \chic(\Psi_f(\one_{X_{\eta}})|_{D_1}) + \chic(\Psi_f(\one_{X_{\eta}})|_{D_2^\circ}).  \]
Using Proposition \ref{computability}
	\[\chic(\Psi_f(\one_{X_{\eta}})) = \chic (v_{1*} \one_{D_{1}^\circ}) + \chic(\one _{D_{2}^\circ}), \]
applying both equations of Proposition \ref{cutandpaste} to { $\one_{D_1}$ and} the close-open complements
	\[ D_{12} \xhookrightarrow{i} D_1 \xhookleftarrow{v_1} D_1^0 \]
	 gives
	\[ \chic(\one_{D_{1}}) = \chic(i^*\one_{D_{1}}) + \chic (v_1^* \one_{D_{1}}) = \chic(\one_{D_{12}}) + \chic(\one_{D_{1}^\circ}) \]
	and
	\[ \chic (v_{1*} \one_{D_{1}^\circ}) = \chic (v_{1*} v_1^* \one_{D_{1}}) = 
	\chic(\one_{D_1}) -\chic(i^!\one_{D_{1}}).\]
Applying Proposition~\ref{Purity}, we have
\[ \chic (v_{1*} \one_{D_{1}^\circ}) =
\chic(\one_{D_1}) - \<-1\> \chic(\one_{D_{12}}). 
\]
	Combining the equations we get the formula
	\[ \chic(\Psi_f(\one_{X_{\eta}})) = \chic(\one_{D_{12}}) + \chic(\one_{D_{1}^\circ}) -  \<-1\> \chic( \one_{D_{12}}) + \chic(\one _{D_{2}^\circ}). \]
	We obtain the nice formulas
		\[ \chic(\Psi_f(\one_{X_{\eta}})|_{D_1}) = \chic(\one_{D_{1}}) -  \<-1\> \chic( \one_{D_{12}}) , \]
	and
	\[ \chic(\Psi_f(\one_{X_{\eta}})) = \chic(D_{1}^\circ)   + \chic(D_{2}^\circ) - (\<-1\>-\<1\>) \cdot \chic(D_{12}). \]
\end{Ex}
This exhibits how Proposition \ref{computability} enables us to compute the Euler characteristic of the nearby cycles functor of the unit when the special fibre is a simple normal crossing divisor. We would like to be able to reduce the general case to that case, also when the special fibre is \emph{not reduced}.

\subsection{Semi-stable reduction}

Let $f:X \rightarrow B$ be as in Section~\ref{SectionNearbyCyclesEulerChar} a flat quasi-projective morphism with $X$ smooth over $k_0$ and $X_\eta$ smooth over $\eta$, with $B=\Spec\OO$ as in Section~\ref{SectionNearbyCycles}.
 Let $\OO_e:=\OO[s]/(s^e-t)$, $B_e:=\Spec\OO_e$  and $b_e: B_e \rightarrow B$ the projection. Let $X_e:=X\times_B B_e$. Note that $ \sigma_e = \sigma $ as the residue field does not change by adding a root, but $\eta_e \rightarrow \eta$ may not be trivial. 

\begin{Def}
A semi-stable reduction datum for $f$ consists of a natural number $e$ and a projective birational map $p_e: Y \rightarrow X_{e} $, such that {$Y$ is smooth over $k_0$,} $Y_\sigma$ is a (\textbf{reduced}) simple normal crossing divisor and  $p_{e\eta}: Y_{\eta} \rightarrow X_{\eta_e} $ is an isomorphism. In addition, we will require that the cover $B_e\to B$ is {\em tame}, that is, that $e$ is prime to the exponential characteristic of $k$. 
\end{Def}
A theorem by Kempf, Knudsen, Mumford, and Saint-Donat \cite{KKMSD} asserts that over a field of characteristic $0$, and base $B$ a smooth curve, every variety $X$ admits a semi-stable reduction.

\begin{Prop}\label{prop:SStableRed}
	Assume $f:X \rightarrow B$ admits a semi-stable reduction $Y \xrightarrow{p_e} X_e \xrightarrow{f_e} B_e$ for some $e$. {Let $\pi:X_e\to X$ be the projection, and let  $f_Y=f_e \circ p_e $.} Then \[ \Psi_f(\one_{X_\eta}) \simeq (\pi\circ p_e)_{\sigma*}\circ \Psi_{f_Y} (\one_{Y_\eta})   \]
\end{Prop}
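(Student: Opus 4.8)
The plan is to decompose the proposed isomorphism into two steps corresponding to the two maps $\pi:X_e\to X$ and $p_e:Y\to X_e$, using Property~\ref{property}, which governs the interaction of $\Psi$ with base change along morphisms of $B$-schemes, together with the special compatibilities for smooth and for projective morphisms. First I would handle the base change $b_e:B_e\to B$, which is finite and flat but \emph{not} a morphism of $B$-schemes in the sense of Property~\ref{property} (it changes the base $B$ itself). Here one must invoke the compatibility of Ayoub's nearby cycles with ramified base change along $\OO\to\OO_e=\OO[s]/(s^e-t)$; since the cover is tame (as $e$ is prime to the exponential characteristic of $k$) and $\sigma_e=\sigma$, the functor $\Psi_{f_e}$ on $X_e$ over $B_e$ relates to $\Psi_f$ on $X$ over $B$ via $\pi_{\sigma*}$ (equivalently $\pi_{\sigma!}$, as $\pi$ is finite). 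The relevant statement is in \cite{Ay07a}; morally $\Psi_f(\one_{X_\eta})\simeq \pi_{\sigma*}\Psi_{f_e}(\pi_\eta^*\one_{X_\eta})=\pi_{\sigma*}\Psi_{f_e}(\one_{X_{e,\eta}})$, using that $\pi$ is finite so $\pi_{\sigma*}=\pi_{\sigma!}$ commutes appropriately with $\Psi$, and the projection/base-change formulas.

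Next I would handle $p_e:Y\to X_e$, which \emph{is} a morphism of flat quasi-projective $B_e$-schemes, and moreover is projective. By Property~\ref{property}(b), the natural transformation $\beta_{p_e}:\Psi_{f_e}\circ p_{e\eta*}\to p_{e\sigma*}\circ\Psi_{f_Y}$ is a natural isomorphism. Apply this to $\one_{Y_\eta}\in\SH(Y_\eta)$: since $p_{e\eta}:Y_\eta\to X_{e,\eta}$ is an isomorphism by the definition of a semi-stable reduction datum, we have $p_{e\eta*}\one_{Y_\eta}\simeq \one_{X_{e,\eta}}$, so $\beta_{p_e}$ gives $\Psi_{f_e}(\one_{X_{e,\eta}})\simeq p_{e\sigma*}\Psi_{f_Y}(\one_{Y_\eta})$. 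Combining the two steps and using the canonical isomorphism $(\pi\circ p_e)_{\sigma*}\cong \pi_{\sigma*}\circ p_{e\sigma*}$ yields the claimed formula $\Psi_f(\one_{X_\eta})\simeq(\pi\circ p_e)_{\sigma*}\Psi_{f_Y}(\one_{Y_\eta})$.

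The main obstacle is the first step: justifying the passage across the ramified base change $B_e\to B$. Property~\ref{property} as stated only covers morphisms of $B$-schemes over a \emph{fixed} trait $B$, whereas $\pi:X_e\to X$ sits over $b_e:B_e\to B$; one needs the compatibility of $\Psi_{(-)}$ with a finite (tame) extension of the valuation ring. I would locate the precise statement in \cite[Ch.~3]{Ay07a} (the behaviour of $\Psi$ under base change of the trait, where tameness of the cover and the equality of residue fields $\sigma_e=\sigma$ ensure that no extra correction terms appear), cite it, and then only need to check that applying it to $\one$ and feeding the result through $\pi_{\sigma*}$ produces exactly $\Psi_f(\one_{X_\eta})$. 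A secondary, routine point is bookkeeping the canonical isomorphisms $g_\eta^*\one\cong\one$, $g_\eta*\one\cong\one$ for the various open/closed immersions and the identification $(\pi\circ p_e)_\sigma\cong\pi_\sigma\circ p_{e\sigma}$ on special fibres, all of which follow from functoriality of the six operations and cause no difficulty.
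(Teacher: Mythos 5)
Your proposal is correct and follows essentially the same route as the paper: the paper's proof also splits along $\pi$ and $p_e$, citing \cite[Proposition 3.5.9]{Ay07a} for exactly the base-change identity $\Psi_f \simeq \pi_{\sigma*}\circ\Psi_{f_e}\circ\pi_\eta^*$ that you flag as the main point to be located, and then uses the isomorphism $\id\to p_{e\eta*}p_{e\eta}^*$ (since $p_{e\eta}$ is an isomorphism) together with Property~\ref{property}(b) for the projective map $p_e$, just as you do. The only difference is presentational: you apply $\beta_{p_e}$ to $\one_{Y_\eta}$ directly rather than inserting the unit $p_{e\eta*}p_{e\eta}^*$, which amounts to the same computation.
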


\begin{proof}   	By \cite[Proposition 3.5.9]{Ay07a}, we have the natural isomorphism $\Psi_f \simeq \pi_{\sigma*}\circ\Psi_{f_e} \circ {\pi}_\eta^* $. Since $p_{e\eta}$ is an isomorphism, the natural map $\id_{\SH(X_{e\eta})}\to p_{e\eta*}\circ p_{e\eta}^*$ is an isomorphism. This together with the pushforward property of $\Psi$ for projective maps, Property \ref{property}(b), gives the sequence of isomorphisms
\[
\Psi_f(\one_{X_\eta}) \simeq p_{\sigma*}\circ\Psi_{f_e} (\one_{X_{e\eta}}) \simeq
	 \pi_{\sigma*}\circ\Psi_{f_e} \circ p_{e\eta*}\circ \pi_{e\eta}^*(\one_{X_{e\eta}})
	 \simeq
	  \pi_{\sigma*}\circ p_{e\sigma*}\circ \Psi_{f_Y}  (\one_{Y_\eta}) \simeq   (\pi \circ p_e)_{\sigma*}\circ \Psi_{f_Y} (\one_{Y_\eta}). 
\]
	   
\end{proof}

As a consequence we can compute $\chic(\Psi_f)$ on a semi-stable reduction.

\begin{Cor} \label{semistablered}
 With the above notation	\[\chic(\Psi_f(\one_{X_\eta})) = \chic(\Psi_{f_Y} (\one_{f_Y})) .\]
 In addition if $D \subset X_\sigma$ is a closed subscheme and $E : = (\pi \circ p_e)^{-1} (D) \subset Y_\sigma$, then  	$\chic(\Psi_f(\one_{X_\eta})|_D) = \chic(\Psi_{f_Y} (\one_{f_Y})|_E) $,
\end{Cor}

\begin{proof}
	$(\pi \circ p_e)_{\sigma}$ is proper, so $ (\pi \circ p_e)_{\sigma *}=(\pi \circ p_e)_{\sigma !}$. Since $\sigma_e=\sigma$, we thus have
\begin{align*}
\chic(\Psi_f(\one_{X_\eta}))=\chi_{\SH(k)}(f_{\sigma!}\circ\Psi_f(\one_{X_\eta}))=&
\chi_{\SH(k)}(f_{\sigma!}\circ(p\circ p_e)_{\sigma!}\circ \Psi_{f_Y} (\one_{Y_\eta}))\\=&
\chi_{\SH(k)}(f_{Y\sigma!}\circ \Psi_{f_Y} (\one_{Y_\eta}))=
\chic(\Psi_{f_Y} (\one_{f_Y})).
 \end{align*}
 The second assertion follows by the same argument replacing $\Psi_f(\one_{X_\eta})$ by $\Psi_f(\one_{X_\eta})|_D$, and using proper base change.
\end{proof}	

\begin{Prop} \label{example2}
	Let $f : X \to B$ be a morphism as in \ref{SectionNearbyCycles}. Assume that that $X_\sigma = D_1\cup D_2$ is the decomposition of the special fibre to irreducibles, with $(X_\sigma)_{red}$ not necessarily a normal crossing divisor, and that we have a semi-stable reduction $Y \to \Spec \OO_e$ for some $e$, with $Y$ smooth over $k_0$, and $Y_\sigma= \widetilde{D_1} + \widetilde{D_2}$ a normal crossing divisor, and $\widetilde{D_1}$, $\widetilde{D_2}$ being the preimages of $D_1$, $D_2$ under the construction, respectively; let $\widetilde{D_{12}} := \widetilde{D_{1}}\cap \widetilde{D_{2}}$. Then we have 
\begin{enumerate}
	\item
		$  \chic(\Psi_f(\one_{X_\eta})) = \chic(\widetilde{D_{1}^\circ})   + \chic(\widetilde{D_{2}^\circ}) -  \chic(\mathbb{G}_m \times \widetilde{D_{12}}). $
	\item
		$  \chic(\Psi_f(\one_{X_\eta})|_{D_1}) = \chic(\widetilde{D_{1}}) - \chic(\mathbb{A}^1 \times \widetilde{D_{12}}). $
\end{enumerate}
\end{Prop}
\begin{proof} 
	By combining Example \ref{example} and Corollary \ref{semistablered} we get
	\[ \chic(\Psi_f(\one_{X_\eta})) = \chic(\Psi_{f_Y} (\one_{f_Y})) =  \chic(\widetilde{D_{1}^\circ})   + \chic(\widetilde{D_{2}^\circ}) - (\<-1\>-\<1\>) \cdot \chic(\widetilde{D_{12}}) \]
	and
	\[ \chic(\Psi_f(\one_{X_\eta})|_{D_1}) = \chic(\Psi_{f_Y} (\one_{f_Y})|_{\widetilde{D_1}}) =  \chic(\widetilde{D_{1}})   + \chic(\widetilde{D_{2}^\circ}) - \<-1\>\cdot \chic(\widetilde{D_{12}}) . \]
	Since $\chic(\Aa^1)=\<-1\>$, $\chic(\mathbb{G}_m)= \chic(\mathbb{A}^1)- \chic(pt) =\<-1\>-\<1\>$, the first formulas can be rewritten as
	\[  \chic(\Psi_f(\one_{X_\eta})) =  \chic(\widetilde{D_{1}^\circ})   + \chic(\widetilde{D_{2}^\circ}) -  \chic(\mathbb{G}_m \times \widetilde{D_{12}}) ,\]
	and
	\[
	\chic(\Psi_f(\one_{X_\eta})|_{D_1}) = \chic(\widetilde{D_{1}}) - \chic(\mathbb{A}^1 \times \widetilde{D_{12}}).
	 \]
\end{proof}
	
Proposition~\ref{example2} can be extended to a special fibre that has more than two components, with no triple intersections. In the next section we describe how to construct a semi-stable reduction in such case.

\subsection{Expressing $\chic (\Psi_f\one)$ by coverings of the strata}

	In the course of their work on motivic integration and motivic Zeta functions, Denef and Loeser define a motivic Milnor fibre {of morphism $f:X\to \Aa^1$} \cite[3.3]{DL00}, \cite[4]{DL} as an element in the Grothendieck ring of varieties, defined by certain coverings of {the strata} of the special fibre {of a resolution of $f$}. Ayoub, Ivorra and Sebag proved that the class defined by Ayoub's functor in this ring can be computed as an alternating sum involving these coverings \cite[Thm. 8.6]{AIS}; their proof relies on the use of motivic stable homotopy category for rigid analytic spaces. We treat here a simple case in which semi-stable reduction can be achieved by a simple construction, and then the formula can be proven by purely geometric means, relying on the properties of the nearby cycles functor mentioned in the previous subsections. 
	
We recall the construction of the covering maps following the description in \cite[3.1]{IS}, that we call here \emph{the Denef-Loeser covers}: Let $\sigma\hookrightarrow B=\Spec\OO \hookleftarrow \eta$ be as in Section~\ref{SectionNearbyCycles}.} Let $f:X\to B$ be a flat quasi-projective morphism with $X$ smooth over $k_0$ and $X_\eta$ smooth over $\eta$, and suppose $(X_\sigma)_{red}$ is a simple normal crossing divisor. We write $X_\sigma = a_1 D_1 + \ldots +a_r D_r $ with $D_1,\ldots,D_r$ the reduced irreducible components and assume that if $char k =p > 1$, then $p \nmid a_i$ for each $i$. Let $I$ be a non-empty subset of $ \{1,\ldots,r\}$, giving the closed stratum $u_I:D_I\to X_\sigma$ and open substratum $v_I:D_I^\circ\to D_I$. 
$f$ may be described on some affine open neighbourhood $U$ of some point of $D_I$ as 
\[f=u \cdot \prod_{i\in I} t_i^{a_i} 
\] 
with $t_i \in \OO_X(U)$, $u \in \OO_X(U)^\times$, and $D_i$ being $V(t_i)$ in $U$. Let $N_I=gcd_{i\in I}(a_i)$. We have the finite \'etale cover
 \[ \widetilde{D_{I,U}}:= \Spec(\OO_{ D_I^{\circ} \cap U} [T] / (T^{N_I} -u)) \rightarrow D_I^\circ \cap U. \]
The finite morphism 
$\widetilde{D_I} \rightarrow D_I $ is defined as the normalisation of $D_I$ in $\widetilde{D_{I, U}}$, and $\widetilde{D^\circ_I}\subset D_I$ is defined to be the open subscheme $\widetilde{D_I}\times_{D_i}D^\circ_I$ of $\widetilde{D_I}$. One shows that this construction is independent of the choice of $U$ and that $\widetilde{D^\circ_I}\to D^\circ_I$ is \'etale. We call the coverings $\widetilde{D_I} \rightarrow D_I $, $\widetilde{D_I^\circ} \rightarrow D_I ^\circ$ the Denef-Loeser coverings of  {$D_I$, $D_I^\circ$, respectively}. {These} coverings are well-defined up to isomorphism and do not depend on the choice of open neighbourhood and local coordinates. \label{DLDef}

In some cases semi-stable reduction can be achieved by {taking} $p: Y \rightarrow X_e$ to be the normalisation of a base change $X_e$ of $X$, and the components of the special fibre $Y_\sigma=  \widetilde{D_1} + \ldots + \widetilde{D_r}$ which lie above $D_1, \ldots, D_r$  give indeed the Denef-Loeser coverings described here. We address such a situation in the following proposition.

\begin{Prop} \label{DL}Let  $f: X \rightarrow B = \Spec \OO$ be a flat morphism, we assume that $X$ is smooth over $k_0$,  with the generic fibre $X_\eta$  smooth over $\eta$. Suppose $(X_\sigma)_{red}$ is a normal crossing divisor, $X_\sigma = aD_1 + b D_2 $, with each $D_i$ smooth. Suppose in addition that $gcd(a,b) =1 $, and if $char k =p > 0$ then $p \nmid a,b$. Let $e=ab$. \\
	Form the base-change $X_e$ as defined above and let $Y\to \Spec \OO_e$ be the normalisation of $X_e$, with the induced morphism $h:Y\to X$. Let $E_i=h^{-1}(D_i)_{red}$, $i=1,2$. Then
	\item[ (1)] $Y$ is a smooth $k_0$-scheme.
	\item[ (2)]  $E_1$ and $E_2$ are smooth divisors on Y, intersecting transversally. In particular, $Y_\sigma=E_1+E_2$ is a simple normal crossing divisor and $Y$ is a semi-stable reduction of $X$. 
	\item[ (3)]  The maps $E_I\to D_I$, $ \emptyset \neq I \subset \{1,2\}$, are isomorphic to the Denef-Loeser covers $\widetilde{D}_I\to D_I$. 
\end{Prop}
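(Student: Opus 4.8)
The plan is to work Zariski-locally on $X$ and carry out the base change and normalisation explicitly, since all three assertions are local on the target and the constructions commute with restriction to open subsets. First I would reduce to an affine neighbourhood $U = \Spec A$ of a point $p \in D_1 \cap D_2$ on which $D_1 = V(x)$, $D_2 = V(y)$ for part of an \'etale system of local coordinates, $f^*(t) = u x^a y^b$ with $u \in A^\times$; away from $D_1 \cap D_2$ only one of the two components meets $U$ and the argument is the simpler one-variable analogue. After replacing $x$ by $u^{1/a}x$ if an $a$-th root of $u$ is available (or, more carefully, by passing to the finite \'etale cover trivialising the relevant roots and tracking it through), one reduces to the model situation $f^*(t) = x^a y^b$, where $X_e \times_X U = \Spec A[s]/(s^{ab} - x^a y^b)$.

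The heart of the matter is to compute the normalisation of $A[s]/(s^{ab}-x^a y^b)$ and check it is smooth. Here I would use that $\gcd(a,b)=1$ to choose integers $m,n$ with $ma + nb = 1$, and introduce the new variable $w = s^{?}/(x^{?}y^{?})$ — concretely, the normalisation is generated over $A$ by elements $\xi, \eta$ with $\xi^b = x$-up-to-unit, $\eta^a = y$-up-to-unit, $s = \xi^b \eta^a$ (after the appropriate monomial change of variables coming from B\'ezout), so that $Y \cap U \cong \Spec A[\xi,\eta]/(\xi^b - x, \eta^a - y)$ locally, which is visibly \'etale over $A[\xi,\eta]$ modulo the coordinate relations, hence smooth over $k_0$ since $p \nmid ab$. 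This is essentially the standard local model for the semistable reduction of $x^a y^b = t$ after a degree-$ab$ base change; the Jacobian criterion then gives (1) immediately, and shows $E_1 = V(\eta)$, $E_2 = V(\xi)$ are smooth and meet transversally, giving (2) — the reduced normal crossing and hence semistability being then automatic. I expect the bookkeeping with the B\'ezout coefficients and the unit $u$ (including making the local description glue, or arguing that smoothness and transversality can be checked after the faithfully flat \'etale cover that kills the roots of $u$) to be the main obstacle; everything else is the Jacobian criterion.

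For (3), I would match the normalisation just constructed against the explicit Denef--Loeser cover from the definition on p.~\pageref{DLDef}. On $U$ with $f^* (t) = u x^a y^b$: for $I = \{1\}$ one has $N_{\{1\}} = a$ and $\widetilde{D_1}$ is the normalisation of $D_1$ in $\Spec \OO_{D_1^\circ}[T]/(T^a - u)$, while restricting the coordinate $\eta$ (with $\eta^a = y \cdot(\text{unit})$) to $E_1 = V(\xi)$ recovers exactly an $a$-th root of the unit $u|_{D_1}$, so $E_1 \to D_1$ and $\widetilde{D_1}\to D_1$ agree on $U$; independence of $U$ is guaranteed by the already-cited independence statement for the Denef--Loeser construction. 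Symmetrically for $I=\{2\}$ with $N_{\{2\}}=b$, and for $I=\{1,2\}$ one has $N_{\{1,2\}} = \gcd(a,b) = 1$ so $\widetilde{D_{12}} = D_{12}$, matching $E_1 \cap E_2 = V(\xi,\eta) \to D_{12}$ which is an isomorphism by the same local computation. This gives (3) and completes the proof.
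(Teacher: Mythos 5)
Your overall strategy is the same as the paper's: compute the normalisation explicitly in a chart, verify smoothness by the Jacobian criterion, and match the special-fibre components against the Denef--Loeser covers using normality. For (1) and (2) your plan essentially works (two caveats: the identification of $\Spec A[\xi,\eta]/(\xi^b-x,\eta^a-y)$ with the normalisation is not ``visible'' --- this presented ring is $A\otimes_{k_0[X,Y]}k_0[\Xi,H]$, hence smooth, but one still has to show the surjection onto the subring of the function field generated by the integral elements is injective, which the paper does by a birationality/Krull-dimension argument; also $s=\xi\eta$, not $\xi^b\eta^a$, and with $\xi^b=x$ one has $E_1=V(\xi)$, not $V(\eta)$).

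The genuine gap is in your treatment of the unit $u$, and with it part (3). In the two-variable chart no root of $u$ is needed: since $\gcd(a,b)=1$, write $1=ma+nb$ and absorb $u=u^{ma+nb}$ via $ux^ay^b=(u^mx)^a(u^ny)^b$; this is how the paper reduces to $x^ay^b$. Your alternative --- an \'etale cover trivialising the roots of $u$ --- is acceptable for checking smoothness and transversality, but it cannot be used for (3), because the content of (3) is exactly the class of $u$: the nontrivial part of $\widetilde{D_i}\to D_i$ is the degree-$a$ (resp.\ $b$) cover defined by the unit, and trivialising $u$ destroys precisely what must be compared. Moreover that cover lives over $D_1^\circ$ away from $D_{12}$, i.e.\ in the one-variable chart $f^*(t)=ux^a$ which you dismiss as ``the simpler analogue''; there $u$ cannot be absorbed at all, and the paper proves $E_1\cong\widetilde{D_1}$ exactly in that chart, by exhibiting $v=s^b/x$ with $v^a=u$ in the normalisation, checking $E_1\cap V\cong \Spec\OO_{D_1\cap U}[T]/(T^a-u)$, and then using that $E_1$ is normal and finite over $D_1$ (the cited ``independence of $U$'' does not replace this step, and a verification only near $D_{12}$ misses components of $D_1$ disjoint from $D_2$, or the whole statement when $D_{12}=\emptyset$). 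Even near $D_{12}$, restricting $\eta$ to $E_1$ yields an $a$-th root of $y$ (up to the absorbed unit), not of the Denef--Loeser unit $uy^b$; these two $\mu_a$-covers do coincide, but only via a further B\'ezout re-indexing ($T\mapsto \eta^b$, using that $b$ is invertible mod $a$), which your plan does not supply.
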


\begin{proof}
 
	Let $m,n$ be integers such that $1=ma+nb$.  \\
		For the first assertion, take $q\in Y$, we will show that $Y$ is smooth over $k_0$ at $q$. If $q\in Y_{\eta_e} \simeq X_{\eta_e} $, then as $B$ is smooth over $k_0$ and $B_e\to B$ is tame, $B_e$ is also smooth over $k_0$. Since $X_{\eta_e}$ is smooth over $\eta_e$, we see that $Y$ is smooth over $k_0$ at $q$. \\
		If $q$ is a point of $Y_\sigma$, let $p=h(q)$. We deal separately with the cases $p \in D_{12} $, $p \in {D_{1}^\circ} $, $p \in {D_{2}^\circ} $.
	
\item[ \textbf{Case 1}]	{For $p\in D_{12}$,} $f$ may be locally described on some affine open $U \ni p$ by $t = u x^a y^b $, $x,y \in \OO_X(U)$ local coordinates on $ U $ with $V(x) = D_{1} \cap U$,  $V(y) = D_{2} \cap U$ and $u \in \OO_X(U)^\times$.	We may assume $u=1$ as $u x^a y^b = u^{ma+nb} x^a y^b  = (u^m x)^a (u^n y)^b $ and we may replace $x$ and $y$ by unit multiples. \\
		In the $e$-base change scheme $X_e$, where we take $s$ with $s^e = t$, the defining equation on $U_e$  becomes   $s^e = x^a y^b  $. \\
		Normalisation can be achieved by adjoining roots $z^b=x $, $w^a = y $ as follows: Set $z = \frac{s^{am} x^n}{y^m}$, $w = \frac{s^{bn} y^m}{x^n}$ and let $V = h^{-1} (U)$.  Then $z$ and $w$ are in  $Frac (\OO_{X_e}(U_e)) $ and satisfy the integral equations above, so $z$ and $w$ are in the normalisation $\OO_Y (V)$, and in addition satisfy the equation $z \cdot w = s$.
	
	Now consider the ring $ \OO_X(U)[s,z,w] \subset \OO_Y(V)$. We claim that in fact $ \OO_X(U)[s,z,w] = \OO_Y(V)$ and that $V$ is smooth over $k$. {Indeed, as} local coordinates $x,y$ define an \'etale map $ \Spec \OO_X (U) \rightarrow \Aa^2_{k_0}$, or equivalently an \'etale ring extension $ k_0[X,Y] \rightarrow \OO_X(U) $. The algebraic picture after adjoining $s,z,w$ to the ring $\OO_X(U)$ is described by the following {commutative} diagram:
			\[ \begin{tikzcd}
			k_0[X,Y]  \arrow[r, ] \arrow[d]
			& \OO_X(U) \arrow[d]  \\
			k_0[X,Y,S,Z,W] / (S-ZW, S^e - X^a Y^b, Z^b - X, W^b - Y) \arrow[r] 
			&  \OO_X (U)[s,z,w]			
		\end{tikzcd} \]
{which induces a surjective  homomorphism 
\[
\phi:\OO_X(U)\otimes_{k_0[X,Y] }k_0[X,Y,S,Z,W] / (S-ZW, S^e - X^a Y^b, Z^b - X, W^a - Y)
\to \OO_X (U)[s,z,w].
\]
We claim that $\phi$ is an isomorphism. To see this, denote the  quotient ring in the left lower corner  by $C$.  Of the equations defining $C$,  the second is redundant as it follows from the other three, the first makes the variable $S$ redundant, and the last two make $X$ and $Y$ redundant, so we can write $C \simeq k_0[Z,W]$.
Since $k_0[X,Y]\to \OO_X(U)$ is  smooth, the homomorphism $k_0[Z,W]\to \OO_X(U)\otimes_{k_0[X,Y] }k_0[Z,W]$ is  smooth as well, hence $\OO_X(U)\otimes_{k_0[X,Y] }k_0[Z,W]$ is smooth over $k_0$, of Krull dimension equal to the Krull dimension of $ \OO_X(U)$. From the equations defining $C$ we can deduce  the further relations
\begin{equation}\label{FurtherRelations}
ZY^m=X^nS^{am}, WX^n=S^{bn}Y^m.
\end{equation}
}
{From the relations $S^e=X^a Y^b$, $S=ZW$, and $t=x^ay^b$, we see that canonical map $\OO_X(U)\to \OO_X(U)\otimes_{k_0[X,Y] }k_0[Z,W]$ extends to 
$\OO_X(U)[s]/(s^e-t)\to  \OO_X(U)\otimes_{k_0[X,Y] }k_0[Z,W]$ by sending $s$ to $1\otimes ZW$.
After inverting $x$ and $y$, the relations \eqref{FurtherRelations} and the universal property of the localization yield an extension of this homomorphism to
\[
\psi:\OO_X(U)[x^{-1}, y^{-1}][s, z,w]\to \OO_X(U)[x^{-1}, y^{-1}]\otimes_{k_0[X,Y] }k_0[Z,W]
\]
sending $z$ to $1\otimes Z$, $w$ to $1\otimes  W$; $\psi$ then defines an inverse to $\phi$, after inverting $x$ and $y$. Furthermore, the extension $k_0[X,Y]\to k_0[Z,W]$ is flat, so $\OO_X(U)\to \OO_X(U)\otimes_{k_0[X,Y] }k_0[Z,W]$ is flat as well, and thus $x$ and $y$ are non-zero divisors on $\OO_X(U)\otimes_{k_0[X,Y] }k_0[Z,W]$. As $\OO_X(U)\otimes_{k_0[X,Y] }k_0[Z,W]$ and $ \OO_X (U)[s,z,w]$ have the same Krull dimension and  both rings are   finite type $k_0$-algebras, the surjective, birational $k_0$-algebra homomorphism $\phi$ has zero kernel (by Krull's principal ideal theorem), hence is an isomorphism, as claimed. }

In addition, this shows that $\OO_X (U) [s,z,w] $ is a smooth $k_0$-algebra. Since $\OO_X (U) [s,z,w] $  contains $\OO_{X_e} (U_e)$ and is contained in the normalisation $\OO_Y(V)$ we have the desired equality  $ \OO_X(U)[s,z,w] = \OO_Y(V)$ and hence $V \subset Y$ is smooth. This also verifies that  $Y_\sigma \cap U$, defined by $s=0 = z \cdot w$, is a reduced divisor, $Y_\sigma \cap U \simeq \Spec  \OO_X(U)[z,w] /(zw) $, with $V \cap \widetilde{D_1} = V(z) $ and $V \cap \widetilde{D_2} = V(w) $.
	
	By definition of the Denef-Loeser covers, since $gcd(a,b) =1$, $ \widetilde{D_{12}}\simeq D_{12} \xrightarrow{id} {D_{12}} $. But also 
\[
	{E_{12}} \cap V \simeq \Spec \OO_X (U) [z,w] /(z,w)  \simeq \Spec (\OO_X(U) \otimes _{k[X,Y]} k[Z,W])/(Z,W) \simeq \Spec \OO_X(U)/(x,y) \simeq  D_{12} \cap U .
\]
		Thus $E_{12}$ coincides with the Denef-Loeser cover $ \widetilde{D_{12}} \simeq D_{12} \simeq E_{12} $.
	
\item[ \textbf{Case 2}]	Consider the case $p \in {D_{1}^\circ} $; the case $p \in {D_{2}^\circ} $ is handled the same way. There is a neighbourhood $U \ni p$ on which $f$ is described as $f^*(t) = t  = u \cdot x ^a$ with $u \in \Gamma (U, \OO_X)^\times $ and $U \cap \widetilde{D_1} = V(x)$. After $e$-base change we have the equation $s^e = u \cdot x^a$. Set $v = \frac{s^b}{x}$, then $v^a = u$, so {$v$ is in $\OO_Y(V)$.}
		In a similar manner to the previous case we wish to describe the ring $\OO_Y(V)$, to ascertain that $V \subset Y$ is smooth. We have to show that the inclusion $  \OO_X(U)[s,v] \subset \OO_Y(V) $ is an equality. {For this, we define a commutative square} 
		\[ \begin{tikzcd}
		k_0[W, W^{-1}, X]  \arrow[r, ] \arrow[d]
		& \OO_X(U) \arrow[d]  \\
		k_0[W, W^{-1}, X, V, S] / (V^a - W, S^b - VX) \arrow[r] 
		&  \OO_X (U)[v, s]			
	\end{tikzcd} \]
{where the upper horizontal morphism is defined by $W \mapsto u $, $X \mapsto x$, and the lower one by $V \mapsto v$, $S \mapsto s$. } We have the isomorphism
	\[
	k_0[W, W^{-1}, X, V, S] / (V^a - W, S^b - VX) \simeq k_0[V, V^{-1}, X, S]/({S^b}{V^{-1}} - X) \simeq k_0[V, V^{-1}, S].
	\]
{As in the previous case, one shows that the square induces an isomorphism
\[
\OO_X(U)\otimes_{k_0[W, W^{-1}, X]}k_0[V, V^{-1}, S]\xrightarrow{\sim} \OO_X (U)[v, s],
\]
so $ \OO_X (U)[v, s]$ is a smooth $k_0$-algebra and is therefore equal to the normalisation $ \OO_Y(V)$.} Thus $V \subset Y$ is smooth and $Y_\sigma \cap V$, being defined by $s=0$, is a smooth divisor on $V$.
	
	We can now show that $\widetilde{D_{1}} \simeq E_1$ over $D_1$. Let $ \pi : \widetilde{D_{1}} \rightarrow D_1$ be the Denef-Loser covering, $U$ {being} the same neighbourhood of $p \in D_1^\circ$ {as above}.  Then by definition $\pi^{-1} (D_1 \cap U) = \Spec (\OO_X (U) [T] /(T^a - u) )/(x) \simeq \OO_X(U)(v) /(x) $.
	On the other hand \[{E_1} \cap V = \Spec \OO_X(U)[v,s] / (s) \simeq \Spec \OO_X(U)[v] / (x) . \]
	We get $ {E_1} \cap V \simeq \pi^{-1} (D_1 \cap U)$. Since $ {E_1} $ is normal and $\widetilde{D_{1}}$ is the normalisation of ${D_{1}}$ in $\pi^{-1}(D_1 \cap U)$, we get $ \widetilde{D_{1}} \simeq E_1 $.
	In the same way $\widetilde{D_2} \simeq E_2$. {This completes the proof of (1), (2) and (3).} 	
\end{proof}

\begin{Rem}\label{RemarkIntegral} With  $f: X \rightarrow B = \Spec \OO$, $a, b$, and $e=ab$ as in 
Proposition~\ref{DL}, suppose that $X$ is irreducible and that $a=1$. We retain the notation of  
Proposition~\ref{DL}.  We claim that the base-change
  $X_e$ is integral. To see this, let $x$ be a generic point of $D_1$. Since $X$ is smooth, $D_1$ is a Cartier divisor on $X$ and thus the local ring $\OO_{X,x}$ is a dvr. Moreover, since $a=1$, $t$ is a parameter for $\OO_{X,x}$. Let $y\in X_e$ be the unique point lying over $x$. Then 
 \[
  \OO_{X_e,y}=\OO_{X,x}\otimes_\OO\OO[s]/s^e-t=\OO_{X,x}[s]/s^e-t.
  \]
 Since $e$ is prime to the characteristic, $\OO_{X_e,y}$ is smooth over $k$, so $\OO_{X_e,y}$  is a normal local ring, hence integral. Since $X_e\to X$ is finite and flat, each irreducible component of $X_e$ dominates $X$, and thus $X_e$ is irreducible and is also reduced in a neighbourhood of $y$. Since $X_e$ is a hypersurface in the smooth $k$-scheme $X\times_k\Spec k[s]$, $X$ is Cohen-Macaulay, and the fact that $X_e$ is irreducible and generically reduced then implies that $X_e$ is integral. 
\end{Rem}

\begin{Prop} \label{AIS} \cite[Theorem 8.6]{AIS}
	Let $f: X \rightarrow \Spec \OO$ be a flat, quasi-projective morphism, with $X$ smooth over $k_0$, and with generic fibre $X_\eta$ smooth over $\eta$. Suppose that the special fibre $(X_\sigma)_{red}$ is a normal crossing divisor, and $X_\sigma = \sum a_i D_i $; if $char k =p >0$, we suppose in addition that $\Psi_f \one$ is a dualisable object, and that $p \nmid \prod_ia_i$. Assume that for all $i \neq j$ $\text{gcd}(a_i, a_j)=1$, and that there are no triple intersections, i.e. for each triple of distinct indices $i, j, k$, $D_i\cap D_j \cap D_k = \emptyset $.	Denote by $\widetilde{D_i}$, $ \widetilde{D_{i}^\circ} $, $\widetilde{D_{ij}}$ the Denef-Loeser coverings. Then:
	\begin{enumerate}
		\item
	If for some $i$, $D_i$ intersects only a single additional stratum $D_j$, we have	\[  \chic(\Psi_f(\one_{X_\eta})|_{D_i}) = \chic(\widetilde{D_{i}}) - \chic(\mathbb{A}^1 \times \widetilde{D_{ij}}). \]
	\item
	\[  \chic(\Psi_f(\one_{X_\eta})) = \sum_i \chic(\widetilde{D_{i}^\circ}) - \sum_{i < j}  \chic(\mathbb{G}_m \times \widetilde{D_{ij}}). \]
		\end{enumerate}
\end{Prop}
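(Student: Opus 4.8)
The plan is to reduce to the case where the special fibre is already a reduced normal crossing divisor by passing to an explicit semi-stable reduction, and then to prove the formula in that case by induction on the number of components, iterating the computations of Examples~\ref{example} and~\ref{example2}. Concretely, put $e=\prod_ia_i$; since the $a_i$ are pairwise coprime this is $\mathrm{lcm}_i(a_i)$, and it is prime to the exponential characteristic by hypothesis. Let $Y$ be the normalisation of $X_e=X\times_BB_e$, with $p_e\colon Y\to X_e$ the (finite, hence projective) normalisation map and $f_Y=f_e\circ p_e$. I claim $Y\xrightarrow{p_e}X_e\xrightarrow{f_e}B_e$ is a semi-stable reduction of $f$, that $Y_\sigma$ is a reduced normal crossing divisor with no triple intersections, and that its components over $D_i$ and its double intersections over $D_{ij}$ are respectively the Denef-Loeser covers $\widetilde{D_i}$ and $\widetilde{D_{ij}}$. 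That $p_{e\eta}$ is an isomorphism is immediate because $X_\eta$ is smooth over $\eta$, so $X_{\eta_e}$ is normal, and $\eta_e\to\eta$ is \'etale. The other assertions are local on $X$; since there are no triple intersections, \'etale-locally near any point $X_\sigma$ is cut out by $u\cdot\prod_{i\in I}x_i^{a_i}$ with $u$ a unit and $|I|\le 2$, which is precisely the situation treated in Case~2 (for $|I|\le 1$) and Case~1 (for $|I|=2$) of the proof of Proposition~\ref{DL}, except that the base-change exponent is now $e$ rather than $a_i$ (resp.\ $a_ia_j$). One checks that the same normalisation computation still applies: the surplus factor $e/a_i$ (resp.\ $e/(a_ia_j)$), being prime to $a_i$ (resp.\ to $a_ia_j$), contributes one more tame cyclic root and leaves the special fibre reduced, with its piece over $D_i$ equal to $\Spec(\OO_{D_i^\circ}[T]/(T^{a_i}-\bar u))$, i.e.\ to $\widetilde{D_i}$ since $N_{\{i\}}=a_i$, and its double intersection over $D_{ij}$ equal to $D_{ij}=\widetilde{D_{ij}}$ since $N_{\{i,j\}}=\gcd(a_i,a_j)=1$. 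By Corollary~\ref{semistablered}, $\chic(\Psi_f(\one_{X_\eta}))=\chic(\Psi_{f_Y}(\one_{Y_\eta}))$.

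It then suffices to prove: if $h\colon Z\to B$ is flat quasi-projective with $Z$ smooth over $k_0$, $Z_\eta$ smooth over $\eta$, and $Z_\sigma$ a reduced normal crossing divisor with components $C_1,\ldots,C_r$ and no triple intersections, then $\chic(\Psi_h(\one_{Z_\eta}))=\sum_i\chic(C_i^\circ)-\sum_{i<j}\chic(\mathbb{G}_m\times C_{ij})$, where $C_i^\circ$ and $C_{ij}$ are the open and double strata of $Z_\sigma$; applied to $Z=Y$, $C_i=\widetilde{D_i}$ (so $C_i^\circ=\widetilde{D_i^\circ}$, $C_{ij}=\widetilde{D_{ij}}$) this gives the proposition. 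For $r=1$, Remark~\ref{Rem:computability} gives $\Psi_h(\one_{Z_\eta})=\one_{Z_\sigma}$, so the left side is $\chic(C_1)=\chic(C_1^\circ)$. For $r\ge 2$, write $Z_\sigma=C_1\sqcup(Z_\sigma\setminus C_1)$ and apply Proposition~\ref{cutandpaste}. By Property~\ref{property}(a) for the open immersion $Z\setminus C_1\hookrightarrow Z$ one has $\Psi_h(\one_{Z_\eta})|_{Z_\sigma\setminus C_1}=\Psi_{h'}(\one)$, $h'$ the restriction of $h$ to $Z\setminus C_1$; since $(Z\setminus C_1)_\sigma$ is again a reduced normal crossing divisor with $r-1$ components, no triple intersections, and with open strata still $C_i^\circ$ and double strata still $C_{ij}$ for $i,j\ge 2$ (using the no-triple-intersection hypothesis), induction gives $\chic(\Psi_{h'}(\one))=\sum_{i\ge 2}\chic(C_i^\circ)-\sum_{2\le i<j}\chic(\mathbb{G}_m\times C_{ij})$. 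For the closed piece, Proposition~\ref{computability} in the form of Remark~\ref{Rem:computability} with $I=\{1\}$ identifies $\Psi_h(\one_{Z_\eta})|_{C_1}$ with $v_{1*}\one_{C_1^\circ}$, $v_1\colon C_1^\circ\hookrightarrow C_1$ the complement of the disjoint union $\bigsqcup_{j\ge 2}C_{1j}$; then, exactly as in Example~\ref{example}, combining both identities of Proposition~\ref{cutandpaste}, the Purity Proposition~\ref{Purity} for the codimension-one immersions $C_{1j}\hookrightarrow C_1$, and the identities $\chic(\mathbb{G}_m)=\<-1\>-\<1\>$ and $\chic(\mathbb{G}_m\times W)=\chic(\mathbb{G}_m)\cdot\chic(W)$, one gets $\chic(\Psi_h(\one_{Z_\eta})|_{C_1})=\chic(C_1^\circ)-\sum_{j\ge 2}\chic(\mathbb{G}_m\times C_{1j})$. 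Adding the two contributions completes the induction.

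The substantial point is the first step: checking that the normalisation of $X_{\prod_i a_i}$ is smooth over $k_0$, has reduced normal crossing special fibre, and that its components and double intersections literally realise the Denef-Loeser covers; this is a local computation of the same flavour as, but slightly heavier than, the one in Proposition~\ref{DL}, owing to the bookkeeping forced by the pairwise-coprime exponents. Granting this, the second step is a formal manipulation with cut-and-paste, purity, and the computability of $\Psi$ along strata. (Alternatively one can avoid a global semi-stable reduction by stratifying $X_\sigma$ into the $D_i^\circ$ and the $D_{ij}$, computing $\chic(\Psi_f(\one_{X_\eta})|_{D_i^\circ})$ over $X\setminus\bigcup_{k\ne i}D_k$ and $\chic(\Psi_f(\one_{X_\eta})|_{D_{ij}})$ over $X\setminus\bigcup_{k\ne i,j}D_k$, where Proposition~\ref{DL} applies directly; this variant instead needs the identity $\chic(\Psi_f(\one_{X_\eta})|_{D_{ij}})=-\chic(\mathbb{G}_m\times\widetilde{D_{ij}})$, again deduced by cut-and-paste and purity on $\widetilde{D_i}$.)
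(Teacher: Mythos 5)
Your second step (the induction computing $\chic(\Psi_h(\one_{Z_\eta}))$ for a reduced normal crossing special fibre with no triple intersections, by iterating Example~\ref{example} with cut-and-paste and Purity) is fine. The gap is in the first step: the normalisation $Y$ of $X_e$ with $e=\prod_i a_i$ is in general \emph{not} smooth over $k_0$, so it is not a semi-stable reduction in the paper's sense, and Proposition~\ref{computability}, Purity, and Example~\ref{example}, which your induction applies to $Y$, are not available. Concretely, near a point of $D_{ij}$ write (absorbing the unit via $ma_i+na_j=1$) $f^*(t)=x^{a_i}y^{a_j}$; after base change the equation is $s^{e}=x^{a_i}y^{a_j}$ with $e=a_ia_jc$, $c=\prod_{k\neq i,j}a_k$. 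The elements $z=s^{a_imc}x^ny^{-m}$ and $w=s^{a_jnc}y^mx^{-n}$ lie in the normalisation and satisfy $z^{a_j}=x$, $w^{a_i}=y$, $zw=s^{c}$; the ring they generate together with $s$ is \'etale-locally $k_0[z,w,s]/(zw-s^{c})$ (times the remaining coordinates), which is already normal, hence \emph{is} the normalisation — and for $c\ge 2$ it has an $A_{c-1}$-singularity along the locus over $D_{ij}$. For instance $a_*=(2,3,5)$ with $D_1\cap D_2\neq\emptyset$ gives $c=5$ there. Your phrase "the surplus factor contributes one more tame cyclic root" is correct only over the open strata $D_i^\circ$ (where your computation does go through and the cover is the degree-$a_i$ Denef--Loeser cover); at the double points the extra root of $s^{c}=zw$ is exactly what creates the singularity. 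Repairing this by resolving the $A_{c-1}$ loci would add new components to the special fibre and change the whole stratum bookkeeping, so this is not a cosmetic issue.

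The fix is precisely the variant you relegate to your final parenthesis, which is in fact the paper's proof: for each pair $i<j$ restrict to $X_{ij}:=X\setminus\bigcup_{k\neq i,j}D_k$, where $(X_{ij})_\sigma=a_iD_i'+a_jD_j'$ and Proposition~\ref{DL} (base change by $a_ia_j$ only, so $c=1$) does produce a genuinely smooth semi-stable reduction whose strata are the Denef--Loeser covers; compute $\chic(\Psi_{f_{ij}}(\one))$ by Example~\ref{example2}, extract $\chic(\Psi_{f_{ij}}(\one)|_{D_i^\circ})=\chic(\widetilde{D_i^\circ})$ and $\chic(\Psi_{f_{ij}}(\one)|_{D_{ij}})=-\chic(\mathbb{G}_m\times\widetilde{D_{ij}})$ by cut-and-paste, transfer these to $X$ using Property~\ref{property}(a) for the open immersion $X_{ij}\hookrightarrow X$, and sum over the strata of $X_\sigma$. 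So promote that parenthesis to the main argument and drop the global base change by $\prod_i a_i$.
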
	

\begin{proof}
	To analyse each intersection separately consider \[ X_{ij}:= X \setminus \bigcup_{k \neq i,j}  D_k, \]
and set $D_\ell':=D_\ell\setminus \bigcup_{k \neq i,j}  D_k$. Then ${(X_{ij})}_\sigma = a_i D'_i + a_j D'_j $. Define $f_{ij} = f|_{X_{ij}} : X_{ij} \rightarrow B$. \\
		By Proposition~\ref{DL}, $X_{ij}$ admits a semi-stable reduction $Y_{ij}$ with components of the special fibre giving the Denef-Loeser coverings $\widetilde{D_i'}\to D_i'$, $\widetilde{D_j'}\to D_j'$ and $\widetilde{D_{ij}}=D_{ij}$. Note that  $(D_i')^\circ=D_i^\circ$ and $(D_j')^\circ=D_j^\circ$, so $\widetilde{D_i^{\prime\circ}}=\widetilde{D_i^\circ}$ and $\widetilde{D_j^{\prime\circ}}=\widetilde{D_j^\circ}$. We can use Proposition \ref{example2} to get 	
	\[ \chic(\Psi_{f_{ij}}({\one_{X_{ij}}}_\eta)) =  \chic(\widetilde{D_{i}^\circ})   + \chic(\widetilde{D_{j}^\circ}) -  \chic( \mathbb{G}_m \times \widetilde{D_{ij}}). \]
	{By the same argument applied to $X_{ij}\setminus D_{ij}$, we find 
\[ \chic(\Psi_{f_{ij}}({\one_{X_{ij}}}_\eta)|_{X_{ij} \setminus D_{ij}}) =  \chic(\widetilde{D_i^\circ})   + \chic(\widetilde{D_j^\circ}), \]
and by cut and paste, we have 
\[ \chic(\Psi_{f_{ij}}({\one_{X_{ij}}}_\eta)|_{D_{ij}}) = \chic(\Psi_{f_{ij}}({\one_{X_{ij}}}_\eta)) - \chic(\Psi_{f_{ij}}({\one_{X_{ij}}}_\eta)|_{X_{ij} \setminus D_{ij}}), \]
so 
\[ \chic(\Psi_{f_{ij}}({\one_{X_{ij}}}_\eta)|_{D_{ij}}) = -  \chic( \mathbb{G}_m \times \widetilde{D_{ij}}).
\]
Similarly,
\[
\chic(\Psi_{f_{ij}}({\one_{X_{ij}}}_\eta)|_{D_i^\circ})=\chic(\widetilde{D_i^\circ}).
\]}
		{Since $X_{ij}$ is an open neighbourhood of $D_{ij}$ in $X$, the compatibility of $\Psi_{(-)}$ with respect to the smooth morphism $X_{ij}\hookrightarrow X$ (Property~\ref{property}) implies}
	\[ \Psi_f({\one_{X}}_\eta)|_{D_{ij}} = \Psi_{f_{ij}}({\one_{X_{ij}}}_\eta)|_{D_{ij}}. \]
Similarly,
	\[ \Psi_f({\one_{X}}_\eta)|_{D_i^\circ} = \Psi_{f_{ij}}({\one_{X_{ij}}}_\eta)|_{D_i^\circ}. \]
	Now for the first identity use the cut and paste relation along $D_i = D_i^\circ \cup D_{ij}$,
	\begin{align*} \chic(\Psi_f({\one_{X}}_\eta)|_{D_i}) =  \chic(\widetilde{D_i^\circ}) -  \chic( \mathbb{G}_m \times \widetilde{D_{ij}})  = & \chic(\widetilde{D_i^\circ}) + \chic(\widetilde{D_{ij}}) -  [ \chic(\widetilde{D_{ij}}) + \chic( \mathbb{G}_m \times \widetilde{D_{ij}}) ]
		\\ = & \chic(\widetilde{D_i}) - \chic( \mathbb{A}^1 \times \widetilde{D_{ij}}).  
	\end{align*}
For the second identity use cut and paste along $X_\sigma = \coprod_i D_i^\circ \coprod \coprod_{i < j} D_{ij} $ to get
	\[ \chic(\Psi_f({\one_{X}}_\eta)) = \sum_i \chic(\Psi_f({\one_{X}}_\eta)|_{D_{i}^\circ}) + \sum_{ij} \chic(\Psi_f({\one_{X}}_\eta)|_{D_{ij}}) = \sum \chic(\widetilde{D_{i}^\circ}) - \sum  \chic(\mathbb{G}_m \times \widetilde{D_{ij}}) . \]
	\end{proof}
	
	\begin{Rem}  \label{qAIS}
		The second formula is in fact a special case of a result by Ayoub-Ivorra-Sebag \cite[Theorem 8.6]{AIS} which is proven for a general simple normal crossing divisor, relying on the theory of rigid analytic motives. The case considered here suffices for our use in this paper and follows from the same construction we use for our main result so our proof here. We also obtain a formula similar to the one above in Proposition \ref{AIS} in our quasi-homogeneous case, in that case $X_\sigma$ is not a normal crossing divisor, though we can still obtain semi-stable reduction and strata which are essentially the same as the Denef-Loeser coverings, see Remark \ref{qRem}.
	\end{Rem}	

\subsection{Nearby cycles at the base}
We continue with the notation of a discrete valuation ring $\OO$, with subfield $k_0$,  residue field $k$, fraction field $K$ and parameter $t$, and let $B=\Spec\OO$, as in Section~\ref{SectionNearbyCycles}; in this section, however, we assume in addition that $k_0$ has characteristic zero. We have a ring homomorphism $\Sp_t$, {(see \cite[Section 5.1]{Le20b})} from the Grothendieck-Witt ring of the fraction field $K$ to that of the residue field $k$, characterised as the unique map 
\[ \Sp_t : \GW(K) \rightarrow \GW(k) \]
satisfying:

(1) $\Sp_t(t) =\< 1 \> $ for the uniformizer $t$.

(2) $ \Sp_t (u) = \<\bar{u} \> $ for all invertible elements $u \in O^{\times} $ where $\bar{u}$ denotes the image of $u$ under the quotient map $ \OO \rightarrow k $.

Given a strongly dualisable object $\alpha \in \SH(K) $, the motivic Euler characteristic $ \chi(\alpha) $ is an endomorphism of $\SH(K) $, and so the functor {$\Psi_{id}:\SH(K)\to \SH(k)$} can be applied to it and produce an endomorphism of the unit in $\SH(k)$. Via the Morel isomorphism we get an object in $\GW(k)$. We state results from \cite{Le20b}, which follow from the fact that $\Psi_{id}$ is a monoidal functor in characteristic $0$ \cite[Corollaire 3.5.19]{Ay07a}.

\begin{Prop}[\hbox{\cite[Lemma 8.1]{Le20b}}] \label{PsiId}	For $\alpha\in \SH(K)$, we have
$ \Psi_{id *}(\chi(\alpha)) = \chi (\Psi_{id} (\alpha)).$
\end{Prop}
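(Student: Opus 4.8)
The plan is to exhibit $\chi(\alpha)\in\End_{\SH(K)}(\one_K)$ explicitly as the trace of the identity on a strongly dualisable object, and then transport this trace diagram through the functor $\Psi_{id}$, using that $\Psi_{id}$ is monoidal in characteristic zero by \cite[Corollaire 3.5.19]{Ay07a}. Concretely, $\chi(\alpha)=tr(id_\alpha)$ is by definition the composite
\[
\one_K\xrightarrow{\delta_\alpha}\alpha\otimes\alpha^\vee\xrightarrow{\tau}\alpha^\vee\otimes\alpha\xrightarrow{ev_\alpha}\one_K,
\]
an endomorphism of $\one_K$. Applying the functor $\Psi_{id}:\SH(K)\to\SH(k)$ to this composite, and using that $\Psi_{id}$ is symmetric monoidal — so it carries $\one_K$ to $\one_k$, carries the duality data $(\delta_\alpha,ev_\alpha)$ of $\alpha$ to duality data for $\Psi_{id}(\alpha)$ exhibiting $\Psi_{id}(\alpha^\vee)\simeq\Psi_{id}(\alpha)^\vee$, and commutes with the symmetry $\tau$ — the image composite is precisely the trace of $id_{\Psi_{id}(\alpha)}$, i.e. $\chi(\Psi_{id}(\alpha))$.

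The steps, in order, would be: first, recall that a strong dual is preserved by any symmetric monoidal functor, so $\Psi_{id}(\alpha)$ is strongly dualisable with dual canonically $\Psi_{id}(\alpha^\vee)$, and the coevaluation/evaluation for $\Psi_{id}(\alpha)$ are obtained by applying $\Psi_{id}$ to those of $\alpha$ (after identifying $\Psi_{id}(\alpha\otimes\alpha^\vee)\simeq\Psi_{id}(\alpha)\otimes\Psi_{id}(\alpha^\vee)$ via the monoidal structure isomorphism, and $\Psi_{id}(\one_K)\simeq\one_k$). Second, note naturality of the symmetry isomorphism with respect to the monoidal constraint gives $\Psi_{id}(\tau_{\alpha,\alpha^\vee})=\tau_{\Psi_{id}(\alpha),\Psi_{id}(\alpha^\vee)}$ under these identifications. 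Third, assemble these into the statement that $\Psi_{id}$ applied to the four-term trace composite for $id_\alpha$ equals the four-term trace composite for $id_{\Psi_{id}(\alpha)}$; under the Morel isomorphism identifying $\End(\one)$ with $\GW$, and under the notational convention that $\Psi_{id\,*}$ denotes the induced map on endomorphisms of the unit, this reads $\Psi_{id\,*}(\chi(\alpha))=\chi(\Psi_{id}(\alpha))$.

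The main point to be careful about — rather than a genuine obstacle — is that the assertion "any symmetric monoidal functor preserves traces of dualisable objects" requires $\Psi_{id}$ to genuinely be symmetric monoidal, which is exactly where the characteristic-zero hypothesis enters via \cite[Corollaire 3.5.19]{Ay07a}; in positive characteristic $\Psi_{id}$ is only lax monoidal and the argument breaks. A secondary bookkeeping issue is that "$\chi(\alpha)$" is being regarded here simultaneously as an endomorphism of $\one_K$ in $\SH(K)$ and, via Morel, as an element of $\GW(K)$, and one must make sure the functor being applied is the induced map on $\End_{\SH}(\one)$ and not literally $\Psi_{id}$ on objects; but this is precisely the meaning of the notation $\Psi_{id\,*}$, and once the trace diagram identity above is established, the conclusion is immediate. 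I expect the whole proof to be short, citing \cite[Corollaire 3.5.19]{Ay07a} and the general categorical fact about monoidal functors and traces, exactly as the remark preceding the statement already hints.
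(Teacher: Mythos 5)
Your argument is correct and is essentially the same as the one the paper relies on: the paper simply cites \cite[Lemma 8.1]{Le20b} and notes that the result follows from $\Psi_{id}$ being a monoidal functor in characteristic zero \cite[Corollaire 3.5.19]{Ay07a}, which is exactly the mechanism you spell out (monoidal functors preserve duality data, the symmetry, and hence traces). Your caveats about strong versus lax monoidality in positive characteristic and about $\Psi_{id*}$ acting on $\End(\one)$ are the right ones and match the intended reading of the statement.
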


In fact,  $\Sp_t$ computes Ayoub's functor $ \Psi_{id} $.

\begin{Prop}[\hbox{\cite[Proposition 8.2]{Le20b}}] \label{simon} 	The following diagram commutes.
	\[
	\begin{tikzcd}[row sep=small ]
		\End_{\SH(K)}(\one_K) \arrow[r, "\Psi_{id *}"] \arrow[d, "\sim"] & \End_{\SH(k)}(\one_k)  \arrow[d, "\sim"] \\
		\GW(K) \arrow[r, "sp_t"] & \GW(k) \\
	\end{tikzcd}
	. \]
	Here the vertical arrows are Morel's isomorphisms.
\end{Prop}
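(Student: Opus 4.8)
The plan is to transport $\Psi_{id*}$ along Morel's isomorphisms to a map $\GW(K)\to\GW(k)$ and show it satisfies the two properties characterising $\Sp_t$. Since $k_0$ has characteristic zero, $\Psi_{id}$ is monoidal by \cite[Corollaire 3.5.19]{Ay07a} and triangulated, so $\Psi_{id*}$ is a unital ring homomorphism. As $\OO$ is a discrete valuation ring with uniformizer $t$, every $a\in K^\times$ is of the form $ut^n$ with $u\in\OO^\times$, and $\langle a\rangle$ depends only on $a$ modulo squares; hence $\GW(K)$ is generated as a ring by the classes $\langle u\rangle$, $u\in\OO^\times$, together with $\langle t\rangle$. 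By the uniqueness in the characterisation of $\Sp_t$, it therefore suffices to prove $\Psi_{id*}(\langle u\rangle)=\langle\bar u\rangle$ and $\Psi_{id*}(\langle t\rangle)=\langle 1\rangle$.

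The computational device is the identity
\[
\chic\bigl(\Psi_g(\one_{V_\eta})\bigr)=\Psi_{id*}\bigl(\chic(V_\eta/\eta)\bigr),
\]
valid for any finite (hence projective) $B$-scheme $g\colon V\to B$ with $V$ smooth over $k_0$ and $V_\eta$ smooth over $\eta$: since $g$ is projective, $\beta_g\colon\Psi_{id}\circ g_{\eta *}\xrightarrow{\sim}g_{\sigma *}\circ\Psi_g$ is an isomorphism by Property~\ref{property}(b), and combining this with $g_{\eta *}=g_{\eta !}$, $g_{\sigma *}=g_{\sigma !}$ (properness), Proposition~\ref{PsiId}, and the definition of $\chic$ yields the claim. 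I will apply this to explicit finite $V$, computing the left-hand side through a semi-stable reduction (Proposition~\ref{computability}, Remark~\ref{Rem:computability}, Corollary~\ref{semistablered}) and the right-hand side through the fact, from Proposition~\ref{Transfer}, that $\chic(\Spec L/F)$ is the trace form of a finite separable extension $L/F$; recall in particular that the trace form of $F[x]/(x^2-c)$ is $\langle 2\rangle+\langle 2c\rangle$ for $c\in F^\times$.

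For a unit $u$, take $V=\Spec\OO[x]/(x^2-u)$, which is finite étale over $B$ and smooth over $k_0$, with reduced special fibre $V_\sigma=\Spec k[x]/(x^2-\bar u)$; so $V$ is its own semi-stable reduction, $\Psi_g(\one_{V_\eta})=\one_{V_\sigma}$ by the smooth-locus case of Proposition~\ref{computability}, and the displayed identity becomes $\langle 2\rangle+\langle 2\bar u\rangle=\Psi_{id*}(\langle 2\rangle+\langle 2u\rangle)$. Taking $u=2$ and using additivity gives $\Psi_{id*}(\langle 2\rangle)=\langle 2\rangle$; feeding this back, using multiplicativity of $\Psi_{id*}$ together with $\langle 2\rangle^2=\langle 1\rangle$, gives $\Psi_{id*}(\langle u\rangle)=\langle\bar u\rangle$ for every unit. (This case can alternatively be deduced from naturality of Ayoub's specialisation morphism $i^*\Rightarrow\Psi_{id}\circ j^*$ applied to the automorphism $\langle u\rangle$ of $\one_B$.)

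For $\langle t\rangle$ — the crucial case — take $V=\Spec\OO_2$ with $\OO_2:=\OO[x]/(x^2-t)$, so $V_\eta=\Spec K(\sqrt t)$ with trace form $\langle 2\rangle+\langle 2t\rangle$ over $K$. A semi-stable reduction arises from the further base change along $B_2=\Spec\OO_2\to B$ followed by normalisation: in $\OO_2\otimes_\OO\OO_2\cong\OO_2[x]/(x^2-s^2)$ the element $t=s^2$ is already a square, so the normalisation splits as $\OO_2\times\OO_2$ along the branches $x=\pm s$; thus $Y=B_2\sqcup B_2$ with $Y\to B_2$ an isomorphism on each component, whence $\Psi_{f_Y}(\one_{Y_\eta})=\one_{Y_\sigma}$ and $\chic(\Psi_g(\one_{V_\eta}))=\chic(\sigma\sqcup\sigma)=\langle 1\rangle+\langle 1\rangle$. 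Comparing with $\Psi_{id*}(\langle 2\rangle+\langle 2t\rangle)$ via the identity, and using $\Psi_{id*}(\langle 2\rangle)=\langle 2\rangle$, multiplicativity, and the Grothendieck--Witt relation $\langle 2\rangle+\langle 2\rangle=\langle 1\rangle+\langle 1\rangle$, one solves $\langle 2\rangle\bigl(\langle 1\rangle+\Psi_{id*}(\langle t\rangle)\bigr)=\langle 1\rangle+\langle 1\rangle$ to get $\Psi_{id*}(\langle t\rangle)=\langle 1\rangle$. I expect the main obstacle to be this $\langle t\rangle$ case: setting up the explicit semi-stable reduction and checking that the hypotheses of Property~\ref{property}(b), Proposition~\ref{computability} and Corollary~\ref{semistablered} really hold for these finite $B$-schemes; the remaining Grothendieck--Witt manipulations are routine.
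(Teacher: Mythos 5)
Your proposal is essentially correct, but note that the paper does not prove this statement at all: Proposition~\ref{simon} is imported verbatim from \cite[Proposition 8.2]{Le20b}, so the "paper's proof" is a citation. What you give is a self-contained re-derivation using only tools internal to this paper, and it hangs together: $\Psi_{id*}$ is a ring homomorphism by monoidality of $\Psi_{id}$ in characteristic zero; the bridge identity $\chic(\Psi_g(\one_{V_\eta}))=\Psi_{id*}(\chic(V_\eta/\eta))$ for finite flat $g:V\to B$ follows from Property~\ref{property}(b), $g_*=g_!$ and Proposition~\ref{PsiId} without any circular use of Proposition~\ref{simon}; Proposition~\ref{Transfer} identifies $\chic$ of the quadratic algebras with trace forms $\<2\>+\<2c\>$; the étale case $V=\Spec\OO[x]/(x^2-u)$ (with the special value $u=2$) pins down $\Psi_{id*}$ on units; and for $V=\Spec\OO[x]/(x^2-t)$ the normalisation of $\OO_2\otimes_\OO\OO_2\cong\OO_2[x]/(x^2-s^2)$ is indeed $\OO_2\times\OO_2$ (the ring is reduced with two components meeting only over the closed point), so Corollary~\ref{semistablered} applies and gives $\chic(\Psi_g(\one_{V_\eta}))=\<1\>+\<1\>$, from which the Grothendieck--Witt manipulation correctly yields $\Psi_{id*}(\<t\>)=\<1\>$; together with multiplicativity and the generation of $\GW(K)$ by $\<u\>$, $u\in\OO^\times$, and $\<t\>$, this forces agreement with $\Sp_t$. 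Compared with simply quoting \cite{Le20b}, your route buys an elementary, geometric verification on explicit quadratic covers, at the cost of some hypothesis-checking; two small points of hygiene: the identity $\Psi_g(\one_{V_\eta})=\one_{V_\sigma}$ for smooth $g$ is better cited via Property~\ref{property}(a) and $\Psi_{id}(\one_\eta)=\one_\sigma$ (Remark~\ref{Rem:computability}) than as a "smooth-locus case" of Proposition~\ref{computability}, and the word "birational" in the semi-stable reduction datum should be read as "isomorphism over a dense open (in particular on generic fibres)" since $V\times_B B_2$ is not irreducible, which is all that the proofs of Proposition~\ref{prop:SStableRed} and Corollary~\ref{semistablered} use.
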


\section{The homogeneous case}

{We continue to use our discrete valuation ring $\OO$ and base-scheme $B:=\Spec\OO$, and retain the notations and assumptions from Section~\ref{SectionNearbyCycles}.} We make the following assumption on the special fibre.}

\begin{Ass} \label{assumption}
	The reduced special fibre $X_\sigma$ has only isolated singularities $p_1, \ldots, p_r$. Moreover, if  
	  $\hat{X}=Bl_P(X) $ is the blow up of $X$ at $P:=\{p_1,\ldots, p_r\}$, $E=E_{p_1}\amalg\ldots\amalg E_r$ the exceptional divisor and $\pi^{-1}[X_\sigma]:= \overline{\pi^{-1}(X_\sigma\setminus \{p_1,\ldots, p_r\})} $ the proper transform, then $\pi^{-1}[X_\sigma]$ is smooth over $k$ and intersects each $E_i$ transversally.  
\end{Ass}

Let $p$ be a singular point of $X_\sigma$. Now consider function $f^*_p : \OO \to \OO_{X,p}$ (which we sometimes denote just by $f^*$) defined by the composition of $f^* : \OO \to \OO_X(X)$ and the localisation $\OO_X(X)\to \OO_{X,p} $ . We show that Assumption \ref{assumption} is equivalent to having an 'analytic expansion' of $f$ at each singular point $p$ of the form
 \[f_p^*(t) = F(s_0,\ldots,s_n) + h \] 
 with {$s_0,\ldots,s_n$ local coordinates at $p$, } $F$ a homogeneous polynomial of degree $e$ defining a smooth projective hypersurface over $k(p)$, and $h \in m_p^{e+1}$, where $m_p$ is the maximal ideal in $\OO_{X,p}$. We say then that at $p$, \emph{$f$ looks like the homogeneous singularity defined by $F$ }(see Definition \ref{lookslike}).

\begin{Prop}  \label{firstprop}
	Assumption \ref{assumption} above is equivalent to the following two conditions:
	
	(1) The special fibre $X_\sigma$ has only isolated singularities.
	
	(2) At each singular point $p$, let  $\OO_{X,p}$ denote the local ring at $p$, with maximal ideal $m_p$, let $e_p$ be the maximal integer with $f^*(t) \in m_p^{e_p}$, and let $\overline{f^*(t)}_p$ be the image of $f^*(t)$ in $m^{e_p}_p/m^{e_p+1}_p$.  Then $\overline{f^*(t)}_p$ defines a smooth hypersurface in $\Proj \Sym^*(m_p/m^2_p) \simeq \mathbb{P}^n_{k(p)}$.
	
	{Moreover, if Assumption \ref{assumption}} is satisfied then for each singular point $p$ there is a neighbourhood $U$ such that, letting $\hat{U}\to U$ denote the blow-up of $U$ at $p$, the special fibre $\hat{U}_\sigma$ decomposes as $\hat{U}_\sigma = e_pD_1 + D_2 $ with  $D_1 \simeq \P_{k(p)}^n$  the reduced exceptional divisor and $D_2 =\pi^{-1}[U_\sigma] $  the strict transform of $U_ \sigma$. Both $D_1$ and $D_2$ are smooth and intersect transversely, with $D_1\cap D_2\subset D_1$ the hypersurface defined by $\overline{f^*(t)}_p$.
	
\end{Prop}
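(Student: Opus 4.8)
The plan is to localize the whole statement at one singular point $p=p_i$ and reduce everything — both the equivalence and the ``moreover'' clause — to a computation of the blow-up $Bl_pX$ in its standard affine charts, matching Assumption~\ref{assumption} against the Jacobian criterion for the leading form of $f^*(t)$ at $p$. Set $A=\OO_{X,p}$, a regular local ring of dimension $n+1$ (as $X$ is smooth over $k_0$ and $X_\sigma$ has dimension $n$), $m=m_p$, $\kappa=k(p)$, and $g=f^*(t)\in m$. Flatness of $f$ makes $g$ a nonzero-divisor, so $g\neq 0$ and $e:=e_p<\infty$ by Krull's intersection theorem; moreover $e\geq 2$ precisely because $X_\sigma=V(g)$ is singular at $p$ (were $g\notin m^2$, $V(g)$ would be regular at $p$). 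Choose a regular system of parameters $s_0,\dots,s_n$ of $m$; regularity of $A$ gives $\operatorname{gr}_mA\cong\kappa[\bar s_0,\dots,\bar s_n]$, so $\bar g:=(g\bmod m^{e+1})$ is a nonzero degree-$e$ form $F_0\in\kappa[\bar s_0,\dots,\bar s_n]_e$; this is $\overline{f^*(t)}_p$, and it cuts out intrinsically a hypersurface $V(F_0)\subset\Proj\operatorname{gr}_mA=\Proj\Sym^*(m/m^2)\cong\P^n_\kappa$.

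Next I carry out the chart computation. Take an affine open $U\ni p$ containing no other $p_j$ and let $\pi:\hat U=Bl_pU\to U$, so that $\hat U$ is smooth over $k_0$, the exceptional divisor $E:=E_p=\pi^{-1}(p)\cong\Proj\operatorname{gr}_mA\cong\P^n_{k(p)}$ is Cartier with $\OO_{\hat U}(-E)=m\OO_{\hat U}$, and $Bl_PX\times_XU=\hat U$. On the $i$-th chart $U_i$, with $u_j:=s_j/s_i$, one has $E\cap U_i=V(s_i)$, and since $g\in m^e$ we may write $\pi^*g=s_i^e\tilde g$; because $e$ is the \emph{exact} $m$-adic order of $g$, the restriction $\tilde g|_{E\cap U_i}$ is the (nonzero) dehomogenization of $F_0$ in the $i$-th chart. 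Consequently, on $U_i$ the strict transform $D_2:=\pi^{-1}[X_\sigma]$ is $V(\tilde g)$, the scheme-theoretic special fibre is $\hat U_\sigma=V(\pi^*g)=eE+D_2$, and $D_2\cap E=V(F_0)=V(\overline{f^*(t)}_p)$ inside $E\cong\P^n_{k(p)}$. (If $k(p)$ does not embed in $A$, one lifts the coefficients of $F_0$ to $A$ arbitrarily, or passes to $\hat A\cong k(p)[[s_0,\dots,s_n]]$ to get the honest expansion $g=F(s_0,\dots,s_n)+h$ with $h\in m^{e+1}$; the orders, regularity, transversality and the scheme $D_2\cap E$ below do not depend on the choice.)

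Now I assemble the equivalence. Condition (1) is literally the first clause of Assumption~\ref{assumption}, so what remains is to relate (2) to the remaining clauses on $D_2$. Off $E$, $\pi$ identifies $D_2\setminus E$ with $X_\sigma\setminus P$, which is smooth exactly under (1), so everything is concentrated along $D_2\cap E$. For $q\in D_2\cap E$, working in the chart $U_i$ containing $q$, the restriction of $d\tilde g|_q$ to $T_qE=\ker(ds_i|_q)$ is $d_q(\tilde g|_E)$, the differential at $q$ of the equation cutting out $V(F_0)\subset\P^n_{k(p)}$. Hence $D_2=V(\tilde g)$ is regular at $q$ and transverse to $E=V(s_i)$ at $q$ $\iff$ $d\tilde g|_q$ and $ds_i|_q$ are independent $\iff$ $d_q(\tilde g|_E)\neq 0$ $\iff$ (Jacobian criterion for a hypersurface) $V(\overline{f^*(t)}_p)$ is smooth at $q$. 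Letting $q$ range over $D_2\cap E=V(\overline{f^*(t)}_p)$ gives: $D_2$ smooth along $E$ and transverse to $E$ $\iff$ $V(\overline{f^*(t)}_p)$ smooth. Combined with the fact that $D_2\setminus E$ is smooth under (1), and that a transverse intersection of smooth schemes is smooth (so that Assumption~\ref{assumption} forces $D_2\cap E$ smooth, i.e.\ (2)), this yields Assumption~\ref{assumption} $\iff$ (1)+(2). The ``moreover'' statement is exactly the data produced above: under these hypotheses $\hat U_\sigma=e_pD_1+D_2$ with $D_1=E\cong\P^n_{k(p)}$ the reduced exceptional divisor and $D_2=\pi^{-1}[U_\sigma]$ the strict transform, both smooth, meeting transversally, with $D_1\cap D_2=V(\overline{f^*(t)}_p)\subset D_1$.

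The hard part is the local blow-up computation $\pi^*g=s_i^e\tilde g$ together with the identification of $\tilde g|_E$ with the dehomogenized leading form, and the resulting Jacobian-criterion translation of ``$D_2$ transverse to $E$'' into ``$V(\overline{f^*(t)}_p)$ smooth''; the rest is bookkeeping over the charts $U_i$ and over the finitely many $p_i$. The one genuinely delicate point is that $k(p)$ need not sit inside $\OO_{X,p}$, so the passage to the polynomial $F$ of Definition~\ref{lookslike} must be read after lifting coefficients or completing, as above; this is harmless precisely because the invariants being checked are insensitive to these choices.
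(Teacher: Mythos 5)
Your proof is correct and follows essentially the same route as the paper: localize at each singular point, compute the blow-up in the standard affine charts to obtain $\hat U_\sigma = e_p D_1 + D_2$ with $D_1\cong\P^n_{k(p)}$ and $D_1\cap D_2 = V(\overline{f^*(t)}_p)$, and deduce the equivalence with Assumption~\ref{assumption}. Your Jacobian-criterion analysis of smoothness and transversality of $D_2$ along the exceptional divisor simply spells out the final equivalence that the paper handles more tersely via a codimension remark.
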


\begin{proof} 
Let $p$ be a singularity and let $(s_0,\ldots,s_n)=m_p$ be a regular sequence of parameters on the maximal ideal $m_p$ of $\OO_{X,p}$. We write
\[ f^*(t)=F(s_0,\ldots,s_n) +h \] with $F$ a homogeneous polynomial of degree $e$ with coefficients in $\OO_{X,p}$, and $h\in m_p^{e+1}$.
	
	 $\overline {f^*(t)} = F( \bar{s_0}, \ldots, \bar{s}_n ) $ is a homogeneous equation defining an hypersurface in $\mathbb{P}^n_{k(p)}$, $k(p)$ the residue field of $\OO_{X,p}$. We show that this hypersurface is isomorphic to the intersection $D_{12}$.
	 
Define
	\[
	\hat{X}=Bl_p(\Spec \OO_{X,p})= \Proj \OO_{X,p}[T_0,\ldots , T_n]/(s_iT_j-s_jT_i)_{i < j}
	\]
	
		Let $\hat{X}= \bigcup U_i $ be the standard covering of the blow up, where $U_i$ is defined by $T_i \neq 0$. 
	
	For simplicity of notation we describe $U_0$ but the argument is similar for each of the $U_i$.  
	Use $s_0$, $t_1=T_1/T_0, \ldots, t_n=T_n/T_0$ as coordinates on $U_0$.
	\[
	U_0=\Spec \OO_{X.p}[T_1/T_0,.. ,T_n/T_0]/(s_iT_j-s_jT_i)_{i,j} = \Spec \OO_{X,p}[t_1,\ldots,t_n]/(s_0 t_1- s_1,\ldots,s_0 t_n - s_n)
	\]
	We may write now   
	\[ f^*(t)=s_0^e \cdot (F(1,t_1,\ldots,t_n) +s_0\tilde{h})=:s_0^e \cdot g_0 \] with $\tilde{h}\in m_p$.
	Then $D_1 \cap U_0 = V_{U_0}(s_0)$, $D_2 \cap U_0 = V_{U_0}(g_0)$ and $D_{12}= V_{U_0}(s_0, g_0)$; We have $(U_{0})_ \sigma = e\cdot(D_1 \cap U_0) + D_2 \cap U_0 $ and similarly for all $i$, so $\hat{X}_\sigma = e \cdot D_1 + D_2$.

So  $D_1 \cap U_0 \simeq \Spec \OO_{X,p}[t_1,\ldots,t_n]/(s_0, s_1,.., s_n)  \simeq \Spec k(p)[t_1,\ldots,t_n]$. We have a similar computation for each $i$. This shows that the $D_1\cap U_i$ form the standard affine chart for the projective space $\P^n_{k(p )}$, giving the isomorphism $D_1 \simeq \mathbb{P}^n_{k(p)}= \Proj k(p) [T_0,\ldots,T_n]$, with $D_1 \cap U_i$ defined as usual as the open subscheme $T_i \neq 0$.
	
	 $D_{12} \cap U_0$ is defined then by $F(1,t_1,\ldots,t_n) = 0$ inside $D_1 \cap U_0$; making the same construction for general $i$ shows that $D_{12}\cap U_i$ is defined by $F(t_1,\ldots, t_{i-1}, 1, t_i,\ldots, t_n)=0$ inside $D_1\cap U_i=\Spec k(p)[t_1,\ldots, t_n]$, with $t_j=T_{j-1}/T_i$ for $j=1,\ldots,i$ and 
	 $t_j=T_j/T_i$ for $j=i+1,\ldots, n$. This shows that $D_{12}$ is globally defined in $D_1\simeq \P^n_{k(p )}$ by $F$, as claimed. Thus the condition in the statement of the proposition is equivalent to the smoothness of $D_{12}$
	 
	 Now, since the blow-up of $X$ is smooth, $\text{codim}(D_1)=\text{codim}(D_2)=1$ in the blow-up, and $\text{codim}(D_{12})=2$ being a hypersurface in $D_1$, the condition of the proposition is equivalent to Assumption \ref{assumption}. 
\end{proof}	
In the following theorem we compute explicitly the strata of the special fibre of a semi-stable reduction, constructed according to Proposition \ref{DL}.

\begin{Th}
 \label{mainprop}   Let $f:X\to \Spec\OO$ be a flat quasi-projective morphism with $X$ smooth of dimension $n+1$ over $k_0$.   Suppose that $X_\sigma$ has a single singular point $p$ and that at $p$, $f$ looks like the 
 homogeneous singularity defined by $F\in k(p )[T_0,\ldots, T_n]$ of degree $e$, and that $V(F)\subset \P^n_{k(p )}$ is a smooth hypersurface. We suppose in addition that $e$ is prime to the exponential characteristic of $k_0$.  \\
   Let $q:\hat{X}\to X$ be the blow-up of $X$ at $p$ and let $\OO_e=\OO[s]/(s^e-t)$.  Let $D_1\subset \hat{X}$ be the exceptional divisor and let $D_2\subset \hat{X}$ be the proper transform of $X_\sigma$.  Then  there exists a quasi-projective morphism  $Y\to \Spec\OO_e$ and a morphism $\pi:Y\to \hat{X}$ over $\Spec\OO_e\to \Spec\OO$ such that 
	 \item[ (1)] $\pi$ defines a semi-stable reduction of $X$.
 \item[ (2)] The special fibre $Y_\sigma$ is of the form  $\widetilde{D_{1}}  + \widetilde{D_2}$ with $\widetilde{D_{1}}$  and $\widetilde{D_2}$ smooth,  with intersection $\widetilde{D_{12}}$, and with $\pi$ mapping  $\widetilde{D_{1}}$ to $D_1$,   and $\widetilde{D_2}$ to $D_2$.
 \item[ (3)] We have
\[
\widetilde{D_1} \simeq V(F-T_{n+1}^e) \subset \mathbb{P}^{n+1}_{k(p)};
\] and 
\[
\widetilde{D_{12}} \simeq V(F) \subset \mathbb{P}^n_{k(p )};
\]
the maps $ \pi:\widetilde{D_2}\to D_2,\ \pi:\widetilde{D_{12}}\to D_{12}:=D_1\cap D_2 $ are isomorphisms;
the morphism $\widetilde{D_1}\to D_1=\P^n_{k(p )}$ is the evident cyclic cover, induced by the projection  $\P^{n+1}_{k(p )} \setminus\{(0,\ldots,0,1) \}\to \P^n_{k(p )}$ from $(0,\ldots,0,1) $.
 \item[ (4)]  $\widetilde{D_1}\to D_1$, 
$\widetilde{D_2}\to D_2$ and $\widetilde{D_{12}}\to D_{12}$ are the Denef-Loeser coverings as in Definition \ref{DLDef}. 
\end{Th}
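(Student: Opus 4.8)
The plan is to obtain $Y$ and $\pi$ not by working on $X$ directly, but by running the normalisation-of-a-base-change construction of Proposition~\ref{DL} on the blow-up $q:\hat X\to X$, and then reading off the three strata by hand. First I would record that, since $X_\sigma$ looks like the homogeneous singularity defined by $F$ at $p$, Assumption~\ref{assumption} holds, so Proposition~\ref{firstprop} applies: $\hat X$ is smooth over $k_0$, $\hat X_\sigma = eD_1+D_2$ with $D_1\simeq\P^n_{k(p)}$ the reduced exceptional divisor and $D_2$ the strict transform of $X_\sigma$, both smooth and transverse, and $D_{12}:=D_1\cap D_2$ is the hypersurface cut out in $D_1=\P^n_{k(p)}$ by $F$. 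The morphism $\hat X\to B$ is flat (its special fibre is a proper divisor on the integral scheme $\hat X$) and essentially of finite type, with generic fibre $\hat X_\eta=X_\eta$ smooth over $\eta$, and $\gcd(e,1)=1$ with $e$ prime to $\mathrm{char}\,k$. Thus Proposition~\ref{DL} applies to $\hat X\to B$ with $(a,b)=(e,1)$: taking $\OO_e=\OO[s]/(s^e-t)$, $\hat X_e=\hat X\times_B\Spec\OO_e$, and $Y:=$ normalisation of $\hat X_e$ with induced morphism $\pi:Y\to\hat X$, one gets at once that $Y$ is smooth over $k_0$, that $Y_\sigma=\widetilde{D_1}+\widetilde{D_2}$ (with $\widetilde{D_i}:=\pi^{-1}(D_i)_{\mathrm{red}}$) is a reduced normal crossing divisor with $\widetilde{D_1},\widetilde{D_2}$ smooth and transverse, so $Y$ is a semi-stable reduction of $X$ and $\pi$ carries $\widetilde{D_i}$ to $D_i$, and that the maps $\widetilde{D_i}\to D_i$ and $\widetilde{D_{12}}:=\widetilde{D_1}\cap\widetilde{D_2}\to D_{12}$ are the Denef--Loeser covers of Definition~\ref{DLDef}. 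This settles (1), (2) and (4), and reduces (3) to identifying these three covers explicitly.

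For $\widetilde{D_2}$ and $\widetilde{D_{12}}$ I would use that the relevant $\gcd$ equals $1$. Since $D_2$ occurs in $\hat X_\sigma$ with multiplicity $1$, we have $N_{\{2\}}=1$ and the Denef--Loeser cover of $D_2^\circ$ is the trivial cover; as $D_2$ is smooth, hence normal, the normalisation of $D_2$ in it is $D_2$ itself, so $\pi:\widetilde{D_2}\to D_2$ is an isomorphism. Likewise $N_{\{1,2\}}=\gcd(e,1)=1$, and since $\hat X_\sigma$ has only the two components one has $D_{12}^\circ=D_{12}$, so the Denef--Loeser cover of $D_{12}$ is trivial and, $D_{12}$ being smooth, $\pi:\widetilde{D_{12}}\to D_{12}$ is an isomorphism; composing with $D_{12}\simeq V(F)\subset\P^n_{k(p)}$ from Proposition~\ref{firstprop} gives $\widetilde{D_{12}}\simeq V(F)$.

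For $\widetilde{D_1}$ the plan is to exhibit the candidate $W:=V(F-T_{n+1}^e)\subset\P^{n+1}_{k(p)}$ together with the restriction $\rho:W\to\P^n_{k(p)}$ of the linear projection away from $q_\infty=(0{:}\cdots{:}0{:}1)$ — which is defined since $F(0,\ldots,0)-1=-1\neq 0$, so $q_\infty\notin W$ — and to show this $\rho$ is the Denef--Loeser cover $\widetilde{D_1}\to D_1$. The routine parts are: $W$ is smooth (a singular point forces $T_{n+1}=0$ from $\partial_{T_{n+1}}(F-T_{n+1}^e)=-eT_{n+1}^{e-1}$ with $e$ invertible, and then $\partial_{T_i}F=0$ for $i\le n$, whence $F=0$ by Euler's relation, giving a singular point of the smooth $V(F)\subset\P^n_{k(p)}$ — impossible), in particular normal; $\rho$ is finite of degree $e$; and $\rho$ is finite \'etale over $D_1^\circ=\P^n_{k(p)}\setminus V(F)$ (there $T_{n+1}^e=F$ is invertible and $e$ is invertible), with $\rho^{-1}(D_1^\circ)=W\setminus(W\cap V(T_{n+1}))$. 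Granting the identification $\rho^{-1}(D_1^\circ)\cong\widetilde{D_1^\circ}$ over $D_1^\circ$, the universal property of normalisation — applied to the normal scheme $W$, finite over $\P^n_{k(p)}$ and restricting to $\widetilde{D_1^\circ}$ on a dense open — gives $W\cong\widetilde{D_1}$, proving (3), and simultaneously shows $\widetilde{D_1}\to D_1$ is the asserted projection from $q_\infty$ and that $\widetilde{D_1}\cap\widetilde{D_2}=\widetilde{D_{12}}$ corresponds to $W\cap V(T_{n+1})\simeq V(F)$, consistent with the previous paragraph.

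The genuine obstacle is this last identification $\rho^{-1}(D_1^\circ)\cong\widetilde{D_1^\circ}$: one must pin down the unit $u$ entering the Denef--Loeser construction along the exceptional divisor. Concretely, in the chart $U_0=\{T_0\neq 0\}$ of $\hat X$ one writes, using the proof of Proposition~\ref{firstprop}, $f^*(t)=s_0^e g_0$ with $g_0=F(1,t_1,\ldots,t_n)+s_0\tilde h$, $\tilde h\in m_p$, where $s_0$ is a local equation for $D_1$ and, in a neighbourhood of $D_1^\circ$ (away from $D_2=V(g_0)$), $g_0$ is the unit $u$. The key point is that $g_0|_{\{s_0=0\}}=F(1,t_1,\ldots,t_n)$, i.e.\ the higher-order term $s_0\tilde h$ drops out on restriction to $D_1$, so the Denef--Loeser cover in this chart is $\Spec\big(\OO_{D_1^\circ\cap U_0}[T]/(T^e-F(1,t_1,\ldots,t_n))\big)$ — precisely $W\cap\{T_0\neq 0\}$ with $\{T_{n+1}=0\}$ removed, under $T\leftrightarrow T_{n+1}/T_0$. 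Repeating this verbatim in the charts $\{T_i\neq 0\}$ and checking that the chart-wise isomorphisms glue (independence of the covers on the chosen chart and local coordinates being already part of the Denef--Loeser formalism) is the computation that needs care; everything else follows formally.
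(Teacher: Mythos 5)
Your proposal is correct and takes essentially the same route as the paper: Proposition~\ref{firstprop} to get $\hat X_\sigma=eD_1+D_2$ with the stated description of $D_1$, $D_2$, $D_{12}$, then Proposition~\ref{DL} applied to $\hat X\to B$ with weights $(e,1)$ to obtain $Y$ and settle (1), (2), (4), and the same key chart computation $f^*(t)=s_0^e g_0$ with $g_0|_{D_1}=F(1,t_1,\ldots,t_n)$ for (3). The only (minor, equally valid) divergence is the last step identifying $\widetilde{D_1}$: the paper computes the normalisation explicitly on the charts $V_i$ (adjoining $t_{n+1}=t'/s_0$) and patches these into $V(F-T_{n+1}^e)\subset\P^{n+1}_{k(p)}$, whereas you check directly that $V(F-T_{n+1}^e)$ is smooth, finite over $\P^n_{k(p)}$, and restricts over $D_1^\circ$ to the Denef--Loeser cover, then conclude by uniqueness of the normalisation of $D_1$ in that cover.
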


\begin{proof} 
		 
 By Proposition \ref{firstprop}, $\hat{X}_\sigma = e  D_1 + D_2$ {with $D_1 \simeq \P^n$ and $D_2\to X_\sigma$ a resolution of th singularity of $X_\sigma$,} and so $f \circ q : \hat{X} \rightarrow \Spec \OO$ satisfies the requirements of Proposition \ref{DL} (with $a=e$, $b=1$).
 Then we have the scheme $Y$ constructed by first forming the base-change { by $\OO\to \OO_e$,} and then taking the normalisation. By Proposition \ref{DL}, $Y$ is a semi-stable reduction for $\hat{X}$. That is, $Y$ is smooth over $k_0$ and  $Y_\sigma = \widetilde{D_{1}} + \widetilde{D_2} $ is a (reduced) simple normal crossing divisor. Also if we denote by $h$ the composition
  \[h : Y \rightarrow \hat{X}_e \rightarrow \hat{X} \rightarrow X, \]
  then $\widetilde{D_I} = h^{-1}(D_I) \rightarrow D_I$ are the Denef-Loeser coverings for all ${\emptyset\neq} I \subset \{1,2\}$. The only thing we have left to do is to give the explicit description of those coverings.

 By definition of Denef-Loeser covers and since $b=1$, $\widetilde{D_{12}} \simeq D_{12}$ and $\widetilde{D_2} \simeq D_2 $. By Proposition \ref{firstprop} then, $\widetilde{D_{12}} \simeq V(F) \subset \mathbb{P}^n_{k(p)}$. In the remaining part of the proof we shall describe $\widetilde{D_{1}}$.
 
We only need to check the explicit description of the covering  $\widetilde{D_1}  \rightarrow D_1$ after restriction over some neighbourhood of $p$ in $X$. Thus, we may replace $X$ with the local scheme $\Spec\OO_{X,p}$; we change notation and assume that $X=\Spec\OO_{X,p}$ is local.
Take the standard covering of the blow-up $\hat{X}= \bigcup U_i $, where $U_i$ is defined by $T_i \neq 0$.  Write again
	$ f^*(t)=F(s_0,\ldots,s_n) +h $ with $F$ a homogeneous polynomial of degree $e$  and $h\in m_p^{e+1}$.
	Take $s_0$, $t_1=T_1/T_0, \ldots, t_n=T_n/T_0$ as coordinates on $U_0$. {Then}
	\[
	U_0 \simeq \Spec \OO_{X,p}[t_1,\ldots,t_n]/(s_i-s_0  t_i) .
	\]
	On $U_0$,  
	$ f^*(t)=s_0^e \cdot (F(1,t_1,\ldots,t_n) +s_0\tilde{h})=:s_0^e \cdot g_0 $ with $\tilde{h}\in m_p$ {and $g_0=F(1,t_1,\ldots,t_n) +s_0\tilde{h}$}. After the base change, on $U_{0,e} = U_0 \times_\OO\Spec \OO[t']/(t'^e - t)$ we have
	\[ U_{0,e} \simeq \OO_{X,p}[t_1,\ldots,t_n, t']/(s_i-s_0 t_i, s_0^e \cdot g_0 - (t')^e ) . \]
	Normalising amounts to adjoining $t_{n+1} = t{'}/s_0 $, which is an integral element as $t_{n+1}^e= g_0$ [see the proof of Proposition \ref{DL}].
	So on $V_0$, the inverse image of $U_0$ in $Y$, we have
	\[
	V_0= \Spec( \OO_{X,p} [t_1,\ldots,t_n, t_{n+1} ] /( \{s_i- s_0  t_i\}_{1\le i\le n}, g_0 - (t_{n+1})^e )) .
	\]
The special fibre $Y_{\sigma}$ then is covered by the $V_i= h^{-1}(U_i)$. \\ The exceptional divisor $\widetilde{D_1}$ is the fibre along $\Spec k(p) \hookrightarrow \Spec \OO_{X,p} $, defined by $s_0 = 0$ on $V_0$, and so 
\[ 
	\widetilde{D_1} \cap {V_0} =  \Spec k(p)[t_1,\ldots,t_{n+1}]/({\bar{g}_0-t_{n+1}^e}) ,
	\]
	 where $ \bar{g}_0={F}(1,t_1,..,t_n) $. Set $\P^{n+1}_{k(p)} = \Proj k(p)[T_0,\ldots,T_{n+1}] = \bigcup_{i=0}^{n+1} W_i$ to be the standard affine covering, with $W_i$ corresponding to $T_i \neq 0$,  and identify $\widetilde{D_1} \cap {V_0}$ as embedded in the affine space $ W_0 = \Spec k(p)[t_1,\ldots,t_{n+1}] $   with $t_j = T_j/T_0 $. \\
	   In order to describe the cover $\widetilde{D_1}\cap V_0 \to D_1\cap U_0$, we also use the identification $D_1 = \P^n_{k(p)} = \Proj k(p)[T_0,\ldots,T_n] $ as in   Proposition \ref{firstprop}, with $D_1\cap U_0$ being  $\Spec k(p)[t_1,\ldots,t_n] $, still with $t_j = T_j/T_0$. 
We then get the restriction of the cover $\widetilde{D_{1}} \to D_1 $ to $V_0$ to be
	     \[	\begin{tikzcd}[row sep=small]
	   	\Spec k(p )[t_1,\ldots,t_n, t_{n+1}]/(F(1, t_1,\ldots,t_n)-t_{n+1}^e) \arrow[r, ] \arrow[d, hook] &  \Spec k(p )[t_1,\ldots, t_n]  \arrow[d, hook] \\
	   	V(F-T_{n+1}^e) \subset \P^{n+1}_{k(p)} \arrow [r] & \P^n_{k(p)} . 
	   \end{tikzcd}
 \]
This is the restriction of the cover $V_{\P_{k(p)}^{n+1}}(F-T_{n+1}^e)\to \P^n_{k(p )}$ over $W_0$. \\
	 	  Similarly,   for each $i = 0,..,n$,  the cover  $\widetilde{D_1}\cap V_i \to D_1\cap U_i$ is \[\Spec k(p )[t_1,\ldots,t_n, t_{n+1}]/(F(t_1 ,\ldots,t_{i-1}, 1, t_{i+1}, \ldots, t_n)-t_{n+1}^e)\to \Spec k(p )[t_1,\ldots, t_n] \] with $t_j:=T_j/T_i$ as in the proof of Proposition~\ref{firstprop}. Here we are  considering $V_i \cap \widetilde{D_1}$ as  a closed subscheme of  $W_i$.
	  We get $\widetilde{D_1}\cap V_i = V(F-T^e_{n+1})\cap W_i$ in  $\P^{n+1}_{k(p )}$. These restrictions of $\widetilde{D_{1}} \to D_1$ to $V_i$, patch together then to give exactly the desired cover \[V_{\P^{n+1}_{k(p)}}(F(T_0,\ldots,T_n)-T_{n+1}^e)\to \P^n_{k(p )}. \]
	  To be precise, the {open subschemes} we described here are $V(F-T_{n+1}^e) \cap W_i$ for $i=0,\ldots,n$, and in principle we should also consider the remaining {open} $V(F-T_{n+1}^e) \cap W_{n+1} $. This  {open} is defined by $F(y_0,\ldots,y_n) -1 =0$ on $W_{n+1}=\Spec k(p)[y_0,\ldots,y_n]$ with $y_i = T_i / T_{n+1}$, $i= 0,\ldots,n$. But since $F$ is homogeneous, $y_0,\ldots,y_n$ satisfying this equation cannot be all $0$, so at least one $T_i \neq 0$, $i < n+1 $, and the point falls in some $W_i$, $i < n+1$. So this remaining {open} is contained in the union of the others, and {is} therefore redundant for our covering describing $ V(F-T_{n+1}^e)$.
	  	 
	 $\widetilde{D_{12}}$ is given locally on $V_i$ by both $s_i=0$ and $t_{n+1}=0$, and so by the description of $\widetilde{D_{1}} \to D_1$ above it is contained in the $ \mathbb{P}^{n}_{k(p)}\subset \mathbb{P}^{n+1}_{k(p)} $ given by $T_{n+1} = 0$. We have \[ \widetilde{D_{12}} \simeq D_{12} \simeq V(F) \subset \mathbb{P}^{n}_{k(p)} \] as we saw in Proposition \ref{firstprop}.
\end{proof}

\begin{Cor}\label{CorLocalHomogFormula}
	Let $f:X\to \Spec\OO$ be a flat quasi-projective morphism with $X$ smooth over $k_0$ and with $X_\eta$ smooth over $\eta$. Suppose that the special fibre $X_\sigma$ has an isolated singular point $p$, at which $f$ looks at like the homogeneous singularity defined by a homogeneous polynomial  $F\in k(p)[T_0,\ldots, T_n]$ of degree $e$,  with $V(F)\subset \P^n_{k(p)}$ a smooth hypersurface; if char $k_0 > 0$ assume that $gcd(e,p)=1$ and that $\Psi_f \one $ is a strongly dualisable object. Then
	\[
	\chic(\Psi_f({\one_X}_{\eta})|_p)= \chic(V(F-T_{n+1}^{e}))-
	\<-1\> \chic(V(F)).
	\]
	\end{Cor}
	
\begin{proof}
	Notice that by the construction in the theorem above, the preimage of the point $p$ is $\widetilde{D_{1}}$. Therefore by Corollary \ref{semistablered}
	\[
	\chic(\Psi_f \one_{X_{\eta}} |_p) = \chic (\Psi_g \one_{Y_\eta}|_{\widetilde{D_{1}}}).
	\]
	Then we get the result by applying Proposition \ref{AIS} (1).
\end{proof}

 \begin{Cor}\label{CorHomogFormula}
  Let $f:X\to \Spec\OO$ be a flat quasi-projective morphism with $X$ smooth over $k_0$ and $X_\eta$ smooth over $\eta$. Suppose that the special fibre $X_\sigma$ has finitely many singular points $p_1,\ldots, p_r$, and for each $i$,  $f$ looks at $p_i$ like the homogeneous singularity defined by a homogeneous polynomial  $F_i\in k(p_i)[T_0,\ldots, T_n]$ of degree $e_i$,  with $V(F_i)\subset \P^n_{k(p_i)}$ a smooth hypersurface. If $char k =p >0$, suppose in addition that $\Psi_f \one$ is a dualisable object, and that $p \nmid \prod_ia_i$. Let $X_\sigma^\circ=X_\sigma\setminus\{p_1,\ldots, p_r\}$. Then
 \[
\chic(\Psi_f({\one_X}_{\eta}))=\chic(X_\sigma^\circ)+ \sum_{i=1}^r \chic(V(F_i-T_{n+1}^{e_i}))-
\<-1\> \sum_{i=1}^r \chic(V(F_i)).
\]
\end{Cor}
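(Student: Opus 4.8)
The plan is to deduce the global formula from the local semi-stable reduction of Theorem~\ref{mainprop}, using Proposition~\ref{local} to localise at each singular point.

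\textbf{Reduction to a local statement.} By Proposition~\ref{local},
\[
\chic(\Psi_f(\one_{X_\eta})) = \chic(X_\sigma^\circ) + \sum_{i=1}^r \chic(\Psi_f(\one_{X_\eta})|_{p_i}),
\]
so it suffices to show that for each $i$,
\[
\chic(\Psi_f(\one_{X_\eta})|_{p_i}) = \chic(V(F_i - T_{n+1}^{e_i})) - \<-1\>\,\chic(V(F_i)),
\]
where the $k(p_i)$-schemes on the right are regarded as $k$-schemes (equivalently, by Proposition~\ref{Transfer}, one may compute over $k(p_i)$ and apply $\Tr_{k(p_i)/k}$). Choose an open neighbourhood $U_i$ of $p_i$ in $X$ containing no other $p_j$ and small enough that $f$ looks at $p_i$ like the homogeneous singularity defined by $F_i$ (Definition~\ref{lookslike}, Proposition~\ref{firstprop}). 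Since $U_i\hookrightarrow X$ is smooth, Property~\ref{property}(a) gives $\Psi_f(\one_{X_\eta})|_{(U_i)_\sigma}\simeq \Psi_{f|_{U_i}}(\one_{(U_i)_\eta})$, hence $\chic(\Psi_f(\one_{X_\eta})|_{p_i}) = \chic(\Psi_{f|_{U_i}}(\one_{(U_i)_\eta})|_{p_i})$.

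\textbf{Computation on $U_i$.} Set $(U_i)_\sigma^\circ := (U_i)_\sigma\setminus\{p_i\}$. Since $U_i\setminus\{p_i\}$ is smooth, Remark~\ref{Rem:computability} gives $\Psi_{f|_{U_i}}(\one_{(U_i)_\eta})|_{(U_i)_\sigma^\circ} = \one_{(U_i)_\sigma^\circ}$, so cut-and-paste (Proposition~\ref{cutandpaste}) yields
\[
\chic(\Psi_{f|_{U_i}}(\one_{(U_i)_\eta})|_{p_i}) = \chic(\Psi_{f|_{U_i}}(\one_{(U_i)_\eta})) - \chic((U_i)_\sigma^\circ).
\]
Let $q\colon \hat U_i = Bl_{p_i}(U_i)\to U_i$ be the blow-up, with exceptional divisor $D_1^{(i)}\simeq \P^n_{k(p_i)}$ and proper transform $D_2^{(i)}$ of $(U_i)_\sigma$, and let $Y_i$ be the semi-stable reduction of $U_i$ produced by Theorem~\ref{mainprop}, with $(Y_i)_\sigma = \widetilde{D_1^{(i)}} + \widetilde{D_2^{(i)}}$ and $\widetilde{D_{12}^{(i)}} := \widetilde{D_1^{(i)}}\cap\widetilde{D_2^{(i)}}$. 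By Corollary~\ref{semistablered} and Example~\ref{example} (i.e. Example~\ref{example2} applied to $f\circ q$ and $Y_i$),
\[
\chic(\Psi_{f|_{U_i}}(\one_{(U_i)_\eta})) = \chic(\widetilde{D_1^{(i)\circ}}) + \chic(\widetilde{D_2^{(i)\circ}}) - (\<-1\> - \<1\>)\cdot \chic(\widetilde{D_{12}^{(i)}}),
\]
where $\widetilde{D_j^{(i)\circ}} := \widetilde{D_j^{(i)}}\setminus\widetilde{D_{12}^{(i)}}$.

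\textbf{Assembling the pieces.} Because $q$ is an isomorphism over $U_i\setminus\{p_i\}$ and $\widetilde{D_2^{(i)}}\to D_2^{(i)}$, $\widetilde{D_{12}^{(i)}}\to D_{12}^{(i)}:=D_1^{(i)}\cap D_2^{(i)}$ are isomorphisms (Theorem~\ref{mainprop}(3)), we have $\widetilde{D_2^{(i)\circ}}\simeq D_2^{(i)}\setminus D_{12}^{(i)}\simeq (U_i)_\sigma^\circ$, so $\chic(\widetilde{D_2^{(i)\circ}})$ cancels $-\chic((U_i)_\sigma^\circ)$. Substituting $\chic(\widetilde{D_1^{(i)\circ}}) = \chic(\widetilde{D_1^{(i)}}) - \chic(\widetilde{D_{12}^{(i)}})$ and collecting the coefficient of $\chic(\widetilde{D_{12}^{(i)}})$, namely $-\<1\> - (\<-1\> - \<1\>) = -\<-1\>$, gives
\[
\chic(\Psi_f(\one_{X_\eta})|_{p_i}) = \chic(\widetilde{D_1^{(i)}}) - \<-1\>\,\chic(\widetilde{D_{12}^{(i)}}) = \chic(V(F_i - T_{n+1}^{e_i})) - \<-1\>\,\chic(V(F_i)),
\]
using the explicit identifications $\widetilde{D_1^{(i)}}\simeq V(F_i - T_{n+1}^{e_i})$, $\widetilde{D_{12}^{(i)}}\simeq V(F_i)$ of Theorem~\ref{mainprop}(3). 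Summing over $i$ and adding $\chic(X_\sigma^\circ)$ from Proposition~\ref{local} yields the asserted formula. I do not expect a genuine obstacle here: the geometric content sits entirely in Theorem~\ref{mainprop}, and the remaining work is the smooth-base-change property of $\Psi$ (to localise), the cut-and-paste relations, and the bookkeeping for the cancellations and coefficients above; the only points demanding a little care are keeping track of the base fields when $k(p_i)\neq k$, and recording the elementary identity $\chic(\mathbb{G}_m\times Z) = (\<-1\> - \<1\>)\cdot\chic(Z)$ implicit in Example~\ref{example2}, which follows from cut-and-paste together with the K\"unneth isomorphism for $(-)_!$ and Remark~\ref{ChiMult}.
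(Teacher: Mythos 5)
Your proof is correct and follows essentially the same route as the paper: the geometric content is Theorem~\ref{mainprop} together with the strata computation of Example~\ref{example2}/Proposition~\ref{AIS}, and the rest is cut-and-paste, the smooth-pullback property of $\Psi$, and the transfer for the residue fields $k(p_i)$. The only difference is organisational: you assemble the local contributions via Proposition~\ref{local}, whereas the paper proves the case $r=1$ directly and then handles several singular points by induction using Mayer--Vietoris (Proposition~\ref{MV}); the two bookkeeping schemes are interchangeable.
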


\begin{proof} 
This is a consequence of Proposition~\ref{local} and applying the previous Corollary for each $p_i$.
\end{proof}

\section{The quasi-homogeneous case}
We can extend the results of the previous section to larger class of singularities {for which} the defining polynomial is weighted homogeneous. The usual blow-up should be replaced by a weighted blow-up, but treating it is not as straightforward {as in the homogeneous case}. For example, the exceptional divisor of a weighted blow-up would be a weighted projective space, which is generally not smooth. Therefore the result of \cite[Theorem 8.6]{AIS}  (Proposition \ref{AIS}), cannot be applied as it is, as the special fibre is not a simple normal crossing divisor. However in view of Proposition \ref{example2} and using the construction of Proposition~\ref{DL}, if the covering strata are smooth we can still recover a similar semi-stable reduction construction. Presenting the scheme in the weighted case as a quotient of a scheme with a homogeneous singularity modulo a finite group allows {us to use the results} of the previous section; we show that the quotient defines a semi-stable reduction of our original degeneration with smooth strata at the special fibre. The end result is completely parallel to the homogeneous case, taking weights into account.

We retain our assumptions on the discrete valuation ring $\OO$ with residue field $k$ and parameter $t$ from Section~\ref{SectionNearbyCycles}.; as before,  we let $\sigma\hookrightarrow B:=\Spec\OO\hookleftarrow\eta$ denote the closed and generic points of $B:=\Spec\OO$, respectively, and we have the subfield $k_0$ of $\OO$, with $B$ smooth and essentially of finite type over $k_0$, and with $k_0\to k$ finite and separable. Let $f:X \to B$ be flat and quasi-projective, and $p\in X$ a closed point with stalk and maximal ideal $m_p\subset \OO_{X,p}$. Let $(s_0,\ldots, s_n)$ be a regular sequence generating 
$m_p$ and let $(a_0,\ldots, a_n)$ be a system of positive integral weights with $\text{gcd}(a_i, a_j)=1$ for every $i,j$. Define the ideal $m_{p,s_*, a_*}^{(\ell)}\subset \OO_{X,p}$ to be the ideal generated by monomials of weighted homogeneous degree $\ell$, that is, by monomials $s_0^{i_0}\cdot\ldots\cdot s_n^{i_n}$ with $\ell=\sum_ja_ji_j$.

\begin{Def} \label{qdef} Let $f: X \rightarrow \Spec \OO $ a flat quasi-projective morphism of schemes {with $X$ smooth over $k_0$ and  $X_\eta$ smooth over $\eta$.} Let $p\in X_\sigma$ be an isolated singular point and let $F\in k(p)[T_0,\ldots, T_n]$ be a homogeneous polynomial of weighted degree  $e$ for some weights $a_*=(a_0,\ldots, a_n)$ as above. We say that $(X_\sigma, p)$ {\em looks like the weighted homogeneous singularity defined by $F$} if there is a regular sequence of  generators
	for $m_p$  such that
	\[
	f^*(t)\equiv F(s_0,\ldots, s_n)\mod  m_p\cdot m_{p,s_*, a_*}^{(e)}.
	\]
\end{Def}
Here we have implicitly chosen a splitting of 
$\OO_{X,p}/m_p\cdot m_{p,s_*, a_*}^{(e)}\to k(p)$. 

\subsection{Weighted projective space}

We review the notion of weighted projective space as it appears in \cite{Le20b}. Let $R$ be a ring and $a=(a_0,..,a_n)$ a sequence of positive integers, which we call {\em weights}.  Let $R[X_0,..,X_n]$ be the graded ring with $X_i$ having degree $a_i$. Define 
\[ \P_R(a) = \Proj R[X_0,\ldots,X_n]. \]

An alternate description {of $\P_R(a)$} is as a quotient of $\P^n$ by the group scheme $\mu_a = \mu_{a_0} \times \ldots \times \mu_{a_n}$. \\
Let $\iota_a : R[X_0,..,X_n] \rightarrow R[Y_0,\ldots,Y_n] $ be the graded ring homomorphism mapping $X_i$ to $Y_i^{a_i} $, where the ring $R[X_0,..,X_n]$ is with the $a$-grading, and $R[Y_0,\ldots,Y_n] $ is with the usual grading on a polynomial ring.
Let $\mu_a$ act on $ R[Y_0,\ldots,Y_n]$ by $Y_i \mapsto \zeta_{a_i} Y_i$, for  $\zeta_{a_i} \in \mu_{a_i}$. Then the image of $\iota_a$ can be identified with the fixed ring $ R[Y_0,\ldots,Y_n] ^{\mu_a}$, hence defining 
\[ \pi: \P^n \rightarrow \P(a) \] as a quotient $\P(a) \simeq \P^n / \mu_a $. \\
{We may as well view the projective space $\P^n$ at the source of $\pi$ as achieved from $\P(a)$ by adjoining for each $i$ the $a_i$-th root of $X_i$.  We now describe a similar construction of a local version of a 'weighted blow-up' of our scheme $X$ in Definition~\ref{qdef}, retaining the notations from that definition. }

{As our construction is local around  the given point $p\in X_\sigma$, we replace $X$ with an affine open neighbourhood $U$ of $p$ in $X$, such that the local parameters $s_0,\ldots, s_n$ of Definition~\ref{qdef} extend to \'etale coordinates on $U$, that is, the morphism $(s_0,\ldots s_n):U\to \Aa^{n+1}_k$ is \'etale. We change notation and suppose $X=U$, and let $A$ denote the ring of functions on the affine scheme $X=\Spec A$. We let $\mathfrak{m}_p\subset A$ denote the maximal ideal of $p$ and following Definition~\ref{qdef}, we define $\mathfrak{m}_{p, s_*, a_*}^{(e)}\subset \mathfrak{m}_p$ as the ideal defined by monomials of weighted degree $e$ in the $s_i$.} 

\begin{Construction} \label{muconstruction}
	{With $p\in X=\Spec A$, $a_*=(a_0,\ldots, a_n)$ and $s_0,\ldots, s_n\in \mathfrak{m}_p$ \'etale coordinates on $X$ as above, define} $A[s^{1/a}] := A[\sigma_0, \ldots , \sigma_n] /(\sigma_0^{a_0}-s_0,\ldots,\sigma_n^{a_n}-s_n) $ and let $Z = \Spec A[s^{1/a}]$.
 {Let $\mu_a= \mu_{a_0} \times \ldots \times \mu_{a_n} $. We have the $\mu_a$-action on $A[s^{1/a}]$, where $\zeta\in \mu_{a_i}$ acts by
\[
\zeta\cdot \sigma_j:=\begin{cases}\zeta\sigma_i&\text{ for }j=i\\ \sigma_j&\text{ for }j\neq i.
\end{cases}
\]
Then $A$ is equal to the subring of $\mu_a$-invariants in $A[s^{1/a}]$, $A=A[s^{1/a}]^{\mu_a}$, and so the}
  map \[\pi : Z \rightarrow X \] realises $X$ as the quotient of $Z$ by the action of the group scheme $\mu_a$. {Also, there is a unique point $q\in Z$ lying over $Z$, and we have $k(q)=k(p )$. We let $\mathfrak{m}_q\subset A[s^{1/a}]$ denote the maximal ideal of $q\in Z$.}
An argument similar to that given in Remark~\ref{RemarkIntegral} shows that $Z$ is smooth over $k$ and if $X$ is integral, then so is $Z$.

{From Definition~\ref{qdef}, we have
 \[ 
 f^*(t) = F(s_0,..,s_n)+ h.
 \] 
After shrinking $X$ if necessary, and changing notation, we may assume that $h$ is in $\mathfrak{m}_p\cdot\mathfrak{m}_{p,s_*,a_*}^{(e)}\subset A$. 
Letting $g:=\pi\circ f:Z\to\Spec \OO$, we have
\[ g^*(t)= F(\sigma_0^{a_0},\ldots,\sigma_n^{a_n})+ h'\]
with $h' \in \mathfrak{m}_q^{e+1}\subset B$.
Let $G(Z_0,\ldots, Z_n)\in k(p )[Z_0,\ldots, Z_n]$ be the degree $e$ polynomial with
$G(\sigma_0,..,\sigma_n)=F(\sigma_0^{a_0},\ldots,\sigma_n^{a_n})$. }
\end{Construction}
 
 \begin{Def}[\hbox{\cite[Def. 4.2]{Le20b}}] \label{smhyper}
 	Let $F$, $G$ be defined as in the above Construction \ref{muconstruction}. We say that $V(F) \subset \P_{k(p)}(a)$ is a \textit{smooth quotient hypersurface} if the polynomial $G$ defines a smooth hypersurface $V(G) \subset \mathbb{P}^n_{k(p)}$ and in addition $V(F) \subset \P_{k(p)}(a)$ is smooth. {Furthermore, letting $v_i\in   \P_{k(p)}(a)$ be the point with $i$-th homogeneous coordinate 1 and all other coordinates 0,  we require that $F(v_i)\neq0$ if $a_i>1$. Finally, we require that the weights $a_i$ are pairwise relatively prime, each $a_i$ divides $e$, and $e$ is prime to the exponential characteristic of $k$.}
 \end{Def}
 
 \begin{Rem}The condition that each $a_i$  divides $e$ implies that $V(F)$ is a Cartier divisor on  $\P_{k(p )}(a)$. This being the case, the assumption that $V(F)\subset  \P_{k(p)}(a)$ is smooth implies that $V(F)$ does not contain any singular point of $\P_{k(p )}(a)$. If $n\ge2$, and if the $a_i$ are pairwise relatively prime, then $v_i$ is a singular point of $\P_{k(p)}(a)$ if $a_i>1$, so in case $n\ge2$, the last condition in the definition above is superfluous. 
 \end{Rem}

\subsection{The nearby cycles of a quasi-homogeneous singularity}

{As before, we fix a flat quasi-projective morphism $f: X \rightarrow B$ with $X/k_0$ and $X_\eta/\eta$ smooth, and with $B=\Spec\OO$ as  in Section~\ref{SectionNearbyCycles}.} We formulate conditions for the singularities in the quasi-homogeneous case, similar to Assumption \ref{assumption}.

\begin{Ass} \label{qassumption}
	\item[ (1)] The special fibre $X_\sigma$ has only isolated singularities $p_1, \ldots, p_r$.
	\item[ (2)] For each $p\in \{p_1, \ldots, p_r\}$ there is a polynomial  $F\in k(p)[T_0,\ldots, T_n]$ of weighted degree $e_p$ with respect to some weights  $a_*$, with $gcd(a_*)=1$ and $lcm(a_*)$ dividing $e_p$, such that $F$ defines a smooth {quotient} hypersurface in $\mathbb{P}_{k(p)}(a_*)$ (Definition~\ref{smhyper} above), and $(X_\sigma, p)$ looks like the weighted homogeneous singularity defined by $F$ (see Definition \ref{qdef}).
\end{Ass}

For later use we need the following fact:
\begin{Lemma} \label{lemma}
	Let $k$ be a field and let $Y$ be a $k$-scheme, separated and essentially of finite type over $k$. Let $D$ be an effective Cartier divisor on $Y$. Suppose that both $D$ and $Y\setminus D$ are smooth over $k$. Then $Y$ is smooth over $k$.
\end{Lemma}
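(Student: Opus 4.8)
The plan is to reduce to a local statement at points of $D$, since smoothness is an open condition and $Y \setminus D$ is already smooth by hypothesis. Fix a closed point $y \in D$; I may replace $Y$ by an affine open neighborhood of $y$, so assume $Y = \Spec A$ with $A$ essentially of finite type over $k$, and $D = V(f)$ for a nonzerodivisor $f \in A$ (using that $D$ is an effective Cartier divisor). The goal is to show $A$ is smooth over $k$ at the maximal ideal $\mathfrak{m}$ corresponding to $y$. First I would observe that $A$ is reduced: the localization $A_f$ is smooth hence reduced, and $f$ is a nonzerodivisor, so any nilpotent of $A$ would have to be supported on $V(f)$, but an element killed by no power of $f$ that becomes $0$ in $A_f$ must be $0$; more carefully, since $D$ is a Cartier divisor the local rings $\OO_{Y,\eta}$ at generic points $\eta$ of $Y$ all lie in $Y \setminus D$ (as $f$ is a nonzerodivisor it is not in any minimal prime), so $\OO_{Y,\eta}$ is a field, i.e. $Y$ is generically reduced; combined with the fact that $Y$ has no embedded points — which I get from $f$ being a nonzerodivisor and $A/f$ being reduced hence having no embedded primes, via the standard argument that $\operatorname{Ass}(A) \subseteq \operatorname{Ass}(A/fA) \cup \operatorname{Min}(A)$ when... — this needs a little care, so let me instead argue more directly.

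The cleanest route: use the Jacobian/regularity criterion. Since $Y \setminus D$ and $D$ are smooth over $k$, they are in particular regular; I want to conclude $A$ is regular at $\mathfrak{m}$ and that the smoothness (separability) is automatic, then invoke that a regular local ring essentially of finite type over $k$ with the right behavior is smooth — but over an imperfect $k$ regularity does not imply smoothness, so I should track smoothness, not just regularity. The key step is this: $D$ smooth over $k$ of some dimension $d$ near $y$ means $\OO_{D,y} = \OO_{Y,y}/(f)$ is a smooth local $k$-algebra of dimension $d$; lift a regular system of parameters, i.e. choose $t_1,\dots,t_d \in \mathfrak{m}$ whose images in $\OO_{D,y}$ are part of a cofactored smooth coordinate system, so that $dt_1,\dots,dt_d$ are linearly independent in $\Omega_{D/k} \otimes k(y)$. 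Then I claim $f, t_1,\dots,t_d$ generate $\mathfrak{m}$ in $\OO_{Y,y}$: indeed modulo $f$ they generate $\mathfrak{m}/(f)$, so $\mathfrak{m} = (f, t_1,\dots,t_d) + \mathfrak{m}^2$... wait, I need $\mathfrak m = (f,t_1,\ldots,t_d)$, which follows from Nakayama once I know $\mathfrak m/(\mathfrak m^2 + (f))$ is generated by the $t_i$ — true since $\OO_{D,y}$ is smooth of dimension $d$. Hence $\dim \OO_{Y,y} \le d+1$; and since $f$ is a nonzerodivisor, $\dim \OO_{Y,y} = \dim \OO_{D,y} + 1 = d+1$. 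So $\OO_{Y,y}$ is regular (its maximal ideal is generated by $\dim$-many elements).

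Finally I upgrade regularity to smoothness. The point $y$ lies on $D$, which is smooth over $k$, so $k(y)$ is a separable extension of... no — rather, since $D \to \Spec k$ is smooth, $\Omega_{D/k}$ is locally free of rank $d$ and the cotangent complex argument applies. Concretely: consider $\Omega_{Y/k} \otimes_A k(y)$. From the conormal exact sequence for $D = V(f) \hookrightarrow Y$, $(f)/(f^2) \to \Omega_{Y/k}\otimes_A \OO_D \to \Omega_{D/k} \to 0$, and $\Omega_{D/k}\otimes k(y)$ has dimension $d$ while $(f)/(f^2)\otimes k(y)$ is at most $1$-dimensional, so $\dim_{k(y)} \Omega_{Y/k}\otimes k(y) \le d+1 = \dim \OO_{Y,y}$. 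On the other hand, for a local ring essentially of finite type over a field, one always has $\dim_{k(y)}\Omega_{Y/k}\otimes k(y) \ge \dim \OO_{Y,y}$ with equality iff $\OO_{Y,y}$ is smooth over $k$ (this is the standard smoothness criterion via the Jacobian, valid over any field). Combining the two inequalities gives equality, hence $\OO_{Y,y}$ is smooth over $k$. Since $y \in D$ was arbitrary and $Y \setminus D$ is smooth by hypothesis, $Y$ is smooth over $k$.

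The main obstacle I anticipate is bookkeeping over a possibly imperfect base field $k$: one must resist the temptation to argue purely via regularity (which is insufficient) and instead phrase everything in terms of $\Omega_{Y/k}$ and the smoothness criterion $\dim_{k(y)}\Omega_{Y/k}\otimes k(y) = \dim\OO_{Y,y}$; getting the conormal sequence to yield the sharp inequality on $\Omega_{Y/k}\otimes k(y)$ — in particular knowing $df$ is either zero or a nonzero vector extending a basis of $\Omega_{D/k}\otimes k(y)$ — is where the Cartier divisor hypothesis (nonzerodivisor) and the dimension count $\dim D = \dim Y - 1$ are both essential. Everything else is routine commutative algebra.
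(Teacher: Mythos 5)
Your proposal is correct, and its core coincides with the paper's: at a point $y\in D$ you write $D=V(f)$ with $f$ a nonzerodivisor, lift generators of $\mathfrak{m}_{D,y}$ to $t_1,\ldots,t_d\in\mathfrak{m}_{Y,y}$, get $\mathfrak{m}_{Y,y}=(f,t_1,\ldots,t_d)$ by Nakayama, and use that $f$ is a nonzerodivisor to pin down $\dim\OO_{Y,y}=\dim\OO_{D,y}+1$. Where you genuinely diverge is in how regularity is upgraded to smoothness over a possibly imperfect field: the paper disposes of this at the very start by base-changing to an algebraic closure (smoothness of $Y/k$, $D/k$, $Y\setminus D/k$ can be checked after this faithfully flat extension, and over $\bar k$ regular points of (essentially) finite type schemes are smooth), and then only ever proves regularity; you instead stay over $k$ and conclude smoothness directly from the conormal sequence $(f)/(f^2)\to\Omega_{Y/k}\otimes\OO_D\to\Omega_{D/k}\to 0$ together with the Jacobian-type criterion that $\dim_{k(y)}\Omega_{Y/k}\otimes k(y)\ge \dim$ with equality exactly at smooth points. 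Your route buys a proof with no descent step and makes visible exactly where the Cartier hypothesis enters the numerology; the paper's route avoids all $\Omega$-bookkeeping at the price of the (routine) base-change reduction. Two small repairs to your write-up: (i) the criterion as you state it, with $\dim\OO_{Y,y}$ on the right, is the statement for closed points of \emph{finite type} $k$-schemes; since $Y$ is only essentially of finite type, closed points can have residue fields of positive transcendence degree over $k$, and you should either pass to a finite type model (smoothness spreads out) or use the local dimension $\dim_y Y=\dim\OO_{Y,y}+\operatorname{trdeg}_k k(y)$ on both sides — the same correction appears in the rank of $\Omega_{D/k}$, so the inequalities match up unchanged; (ii) your requirement that $dt_1,\ldots,dt_d$ be linearly independent in $\Omega_{D/k}\otimes k(y)$ is never used and can in fact be unachievable simultaneously with the $t_i$ generating $\mathfrak{m}_{D,y}$ when $k(y)/k$ is inseparable — all you need is that the $t_i$ lift a regular system of parameters of $\OO_{D,y}$ and that $\Omega_{D/k}\otimes k(y)$ is $d$-dimensional, the latter being automatic from smoothness of $D$. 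With these cosmetic adjustments the argument is complete.
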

\begin{proof}
	Since smoothness is invariant under field extensions we may assume $k$ is algebraically closed. Let $y$ be a point in $Y$. Since $D$ is a closed subscheme of $Y$, if $ y \notin D$ then it has a smooth neighbourhood. We have to show that also $y \in D$ is a smooth point in $Y$. {Since $D$ is an effective Cartier divisor, there is a neighbourhood $U$ of $y$ in $Y$, and a non-zero divisor $f$ on $U$ such that} $D \cap U$ is defined by the vanishing of  $f$. The exact sequence of sheaves  
	\[ 0 \rightarrow \OO_{U}(-D \cap U) \xrightarrow{\cdot f} \OO_{U} \rightarrow \OO_{D \cap U} \rightarrow 0\]  gives on stalks at $y$
	\[ 0 \rightarrow \OO_{Y,y} \rightarrow \OO_{Y,y} \rightarrow \OO_{D,y} \rightarrow 0 .\] 
	
	{Let } $d = \dim Y$ so $\dim D = d-1$. $D$ is smooth so $\OO_{D,y}$ is a regular local ring of dimension $d-1$, so we can write {the maximal ideal $m_{D,y}$ as generated by a regular sequence,}  $m_{D,y} = (\bar{f}_1,\ldots,\bar{f}_{d-1})$. The $\bar{f}_i$  lift to $f_1,\ldots,f_{d-1}$ in {$m_{Y,y}$}. Now since $ ker (\OO_{Y,y} \rightarrow \OO_{D,y}) = (f) \OO_{Y,y} $ we get from the exact sequence that $m_{Y,y} = (f,f_1,\ldots,f_{d-1})$, with $(f,f_1,\ldots,f_{d-1})$ a regular sequence. Then $\OO_{Y,y}$ is a regular local ring and hence $y$ is a smooth point of $Y$.  
\end{proof}

 Assuming that our only singularity is $p=p_1$, the main result of the section is an analogue of Theorem \ref{mainprop}. The assumptions in the statement of the theorem are meant to choose a convenient neighbourhood to work with, as it does not matter for the formulas deduced in Corollaries \ref{CorLocalqHomogFormula}, \ref{CorWtHomogFormula}. 
  
\begin{Th} \label{qmainprop}
	
	{Let  $f:X\to B$ be a  flat quasi-projective morphism
such that the generic fibre $X_\eta$ is smooth over $\eta$ and with $X$ smooth over $k_0$, satisfying Assumption \ref{qassumption}. Suppose in addition that $p\in X_\sigma$ is the only singular point of $X_\sigma$. Let $e=e_p$, let $F\in k(p)[T_0,\ldots, T_n]$ be as in Assumption \ref{qassumption} for $p$, with respect to weights $a_*$,  and let $\OO_e=\OO[t']/(t^{\prime e}-t)$. Finally, we assume that $X=\Spec A$ is affine with a system of \'etale coordinates $s_0,\ldots, s_n\in \mathfrak{m}_p$, and that the all the steps in Construction~\ref{muconstruction} can be carried out for $(X,p, F, s_*, a_*)$ without having to shrink $X$ to a smaller affine neighbourhood of $p$. } \\
Let $\pi : Z \to X \simeq Z/\mu_a $ be the $\mu_a$-quotient map given by Construction \ref{muconstruction} and {let $q\in Z$ be the unique point lying over $p$; note that $k(p)=k(q)$}. Let $\hat{Z} = Bl_q(Z)$ and let $Y_Z \rightarrow \hat{Z}$ be {the normalisation of the base-change $\hat{Z}_e:=\hat{Z}\times_{\Spec\OO}\Spec\OO_e$.} {Then the $\mu_a$-action on $Z$ extends to a $\mu_a$-action on $Y_Z$. Moreover, letting $Y : = Y_Z / \mu_a $ and letting $Y \rightarrow X$ be the resulting map on the quotients, we have }
{\item[ (1)] $Y$ is smooth over $k$ and $Y\to \Spec\OO_e$ is a semi-stable reduction of $X\to\Spec\OO$.}
	 {\item[ (2)] The special fibre $Y_{\sigma}\subset Y$ is a simple normal crossing divisor, of the form $Y_{\sigma}= \widetilde{D_{1}}+\widetilde{D_2}$ with $\widetilde{D_{1}}, \widetilde{D_{2}}$ smooth, and 
	\[
	\widetilde{D_1} \simeq V(F-T_{n+1}^{e}) \subset \mathbb{P}_{k(p)}(a,1)
	\]	
	\[
  \widetilde{D_{12}}:=\widetilde{D_{1}}\cap  \widetilde{D_2}\simeq V(F) \subset \mathbb{P}_{k(p )}(a).
	\]
Moreover,  the projection $q:\widetilde{D_2}\to X_\sigma$ is an isomorphism over $X_\sigma\setminus\{p\}$ and defines a resolution of singularities of $X_\sigma$, with $q^{-1}(p )=\widetilde{D_{12}}$.}	
\end{Th}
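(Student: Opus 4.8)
The plan is to reduce everything to the homogeneous case already settled in Theorem~\ref{mainprop}, carrying out that construction $\mu_a$-equivariantly on the cyclic cover $Z$. First I would check that $g:=\pi\circ f:Z\to\Spec\OO$ fits the hypotheses of Theorem~\ref{mainprop}: by Construction~\ref{muconstruction} the scheme $Z$ is smooth over $k_0$, its special fibre has the single singular point $q$ (with $k(q)=k(p)$), and $g^*(t)=F(\sigma_0^{a_0},\ldots,\sigma_n^{a_n})+h'=G(\sigma_0,\ldots,\sigma_n)+h'$ with $h'\in\mathfrak m_q^{e+1}$, so $g$ looks at $q$ like the homogeneous singularity defined by $G$; moreover $V(G)\subset\P^n_{k(p)}$ is smooth and $e$ is prime to the exponential characteristic by Definition~\ref{smhyper}. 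Theorem~\ref{mainprop} then yields, canonically, the blow-up $\hat Z=Bl_q(Z)$ with $\hat Z_\sigma=eE_1+E_2$ ($E_1=\P^n_{k(p)}$ the exceptional divisor, $E_2$ the proper transform of $Z_\sigma$), and the normalisation $Y_Z$ of $\hat Z_e$, which is a semi-stable reduction of $Z$ with $(Y_Z)_\sigma=\widetilde E_1+\widetilde E_2$ a reduced simple normal crossing divisor with smooth components, $\widetilde E_1\simeq V(G-T_{n+1}^e)\subset\P^{n+1}_{k(p)}$ the cyclic $e$-cover of $E_1$, $\widetilde E_{12}\simeq V(G)\subset\P^n_{k(p)}$, and $\widetilde E_2\xrightarrow{\sim}E_2$.

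Second I would install the $\mu_a$-action. Since $q$ is the unique point of $Z$ over $p$ it is $\mu_a$-fixed, so the action lifts canonically to $\hat Z=Bl_q(Z)$; as $\mu_a$ acts trivially on $\OO_e$ it acts on $\hat Z_e=\hat Z\times_\OO\OO_e$, and since normalisation is functorial it acts on $Y_Z$, compatibly with the chain $Y_Z\to\hat Z_e\to\hat Z\to Z\to X$. All the schemes in sight being quasi-projective and $\mu_a$ finite, the quotients $Y:=Y_Z/\mu_a$, $\hat Z/\mu_a$, etc., exist; since $|\mu_a|$ is invertible they are normal, and forming $\mu_a$-invariants commutes with the flat base changes to $\eta$, $\eta_e$ and $\sigma$ used below. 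Thus $Y_\sigma=(Y_Z)_\sigma/\mu_a=(\widetilde E_1/\mu_a)+(\widetilde E_2/\mu_a)$ (reduced, since $Y_Z\to Y$ is \'etale at the generic points of $(Y_Z)_\sigma$), and $Y_{\eta_e}=(Y_Z)_{\eta_e}/\mu_a\simeq(Z\times_\OO\eta_e)/\mu_a=X_{\eta_e}$, so that the induced morphism $Y\to X_e$ is projective and restricts to an isomorphism over $\eta_e$.

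The heart of the proof is showing that $Y$ is smooth over $k_0$ and identifying the strata. Here I would use the explicit affine charts $V_0,\ldots,V_n$ of $Y_Z$ built in the proof of Theorem~\ref{mainprop} --- on $V_i$ the coordinates are $\sigma_i,t_1,\ldots,t_n,t_{n+1}$ with the $t_j$ ($j\neq i$) dehomogenising the blow-up and with the single relation $t_{n+1}^e=g_i$, where $g_i$ is the dehomogenisation of $G$ on the $i$-th chart up to higher-order terms --- and compute the induced $\mu_a$-action: $\mu_{a_j}$ ($j\neq i$) scales the single coordinate $t_j$ and fixes the rest, while $\mu_{a_i}$ scales $\sigma_i$ by $\zeta$ and each $t_\ell$ by $\zeta^{-1}$; these are consistent with $t_{n+1}^e=g_i$ precisely because $G(\sigma_0,\ldots,\sigma_n)=F(\sigma_0^{a_0},\ldots,\sigma_n^{a_n})$ is $\mu_a$-invariant. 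Hence every stabiliser acts, in suitable coordinates, by scaling a subset of the coordinates: each $\mu_{a_j}$ has a smooth divisorial fixed locus on the charts where that divisor is visible, and on its own chart $V_i$ it acts freely, the only potential bad fixed point being the point $v_i'\in\P^{n+1}$ at which $G-T_{n+1}^e$ takes the value $F(v_i)$, which is removed from $\widetilde E_1$ exactly by the hypothesis $F(v_i)\neq0$ for $a_i>1$ in Definition~\ref{smhyper}. A finite quotient of a smooth scheme by an action that is \'etale-locally a product of such coordinate scalings is again smooth, so $Y$ is smooth over $k_0$; equivalently, two applications of Lemma~\ref{lemma} reduce smoothness of $Y$ to that of $Y_{\eta_e}\simeq X_{\eta_e}$ and of the two components $\widetilde D_1:=\widetilde E_1/\mu_a$, $\widetilde D_2:=\widetilde E_2/\mu_a$ of $Y_\sigma$. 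The same coordinate description, together with the presentation $\P^n/\mu_a=\P(a)$, $\P^{n+1}/\mu_a=\P(a,1)$ via $X_j=Y_j^{a_j}$ --- under which $G(Y_*)-T_{n+1}^e$ descends to $F(X_*)-T_{n+1}^e$ --- gives $\widetilde D_1\simeq V(F-T_{n+1}^e)\subset\P_{k(p)}(a,1)$, $\widetilde D_{12}:=\widetilde D_1\cap\widetilde D_2\simeq\widetilde E_{12}/\mu_a\simeq V(F)\subset\P_{k(p)}(a)$, and $\widetilde D_2\simeq E_2/\mu_a$, the last being a resolution of $X_\sigma$ which is an isomorphism over $X_\sigma\setminus\{p\}$ and has $q^{-1}(p)=\widetilde D_{12}$; smoothness of $\widetilde D_1$ and $\widetilde D_{12}$ then follows from the pseudo-reflection picture (or directly from Definition~\ref{smhyper}). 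This establishes parts (1) and (2).

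Part (3) is then immediate: $Y$ is a semi-stable reduction of $X$, so Corollary~\ref{semistablered} gives $\chic(\Psi_f(\one_{X_\eta}))=\chic(\Psi_{f_Y}(\one_{Y_\eta}))$, and since $Y_\sigma=\widetilde D_1+\widetilde D_2$ is a reduced normal crossing divisor whose two smooth components meet transversally along $\widetilde D_{12}$, the computation of Example~\ref{example} applies verbatim and yields
\[
\chic(\Psi_f(\one_{X_\eta}))=\chic(\widetilde{D_1^\circ})+\chic(\widetilde{D_2^\circ})-(\<-1\>-\<1\>)\cdot\chic(\widetilde{D_{12}}).
\]
I expect the main obstacle to be the third step: tracking the $\mu_a$-action through the blow-up, base change and normalisation in explicit local coordinates and verifying that at every point it is generated by coordinate-scaling pseudo-reflections with smooth fixed loci --- this is precisely where the full strength of the smooth-quotient-hypersurface conditions of Definition~\ref{smhyper}, in particular $F(v_i)\neq0$ whenever $a_i>1$, is needed to keep the quotient smooth.
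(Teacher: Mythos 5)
Your overall strategy is the same as the paper's: pass to the $\mu_a$-cover $Z$, where the singularity becomes homogeneous with defining polynomial $G(\sigma_0,\ldots,\sigma_n)=F(\sigma_0^{a_0},\ldots,\sigma_n^{a_n})$, apply Theorem~\ref{mainprop} to obtain $Y_Z$, lift the $\mu_a$-action through blow-up, base change and normalisation, analyse the quotient $Y=Y_Z/\mu_a$ in the standard charts, and deduce part (3) from Corollary~\ref{semistablered} together with Example~\ref{example}. The identification of the strata by descending $G-Z_{n+1}^e$ along $\P^{n+1}\to\P(a,1)$, the use of $F(v_i)\neq0$ from Definition~\ref{smhyper}, and the reducedness of $Y_\sigma$ are all in line with the paper's argument.

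The gap is in the justification of the central claim, smoothness of $Y$. The principle you invoke --- that a finite quotient of a smooth scheme by an action which is \'etale-locally a product of coordinate scalings is smooth --- is false as stated: $\mu_2$ acting on $\Aa^2$ by $(x,y)\mapsto(-x,-y)$ is such a scaling, and the quotient is the quadric cone. What is actually needed is that the stabiliser of every point is generated by pseudo-reflections (single-coordinate scalings), and the dangerous factor is exactly $\mu_{a_i}$ on its own chart $V_i$, which scales all coordinates simultaneously; you do observe that its only possible fixed point is excluded by $F(v_i)\neq0$ when $a_i>1$, but the argument has to be run at the level of stabilisers (Chevalley--Shephard--Todd, with $\prod_i a_i$ prime to the characteristic), or, as the paper does, by an explicit two-step computation: on each chart the $\mu_{a_{>0}}$-invariant ring $C_1$ is computed explicitly in the variables $t_i=z_i^{a_i}$, its smoothness is proved by Lemma~\ref{lemma} applied to the principal reduced divisor $V(\sigma_0)\subset\Spec C_1$ (using that $C_1[\sigma_0^{-1}]$ is \'etale over $A_e[s_0^{-1}]$, smooth because $p$ is the only singular point), and then $\mu_{a_0}$ acts freely on $\Spec C_1$ by the condition $F(v_0)\neq0$, so $\Spec C_1\to\Spec C$ is a finite \'etale cover and smoothness descends. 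Your proposed alternative, ``two applications of Lemma~\ref{lemma}'' directly on $Y$, does not work as written either: $Y_\sigma=\widetilde{D_1}+\widetilde{D_2}$ is not smooth, the natural principal divisor $V(s_0)$ is non-reduced when $a_0>1$, and before knowing $Y$ is smooth you cannot assert that the individual reduced components are Cartier on $Y$ --- which is precisely why the paper applies the lemma to the intermediate quotient $\Spec C_1$ rather than to $Y$ itself. With the stabiliser argument (or the explicit invariant computation) substituted for the false general principle, the remainder of your proof matches the paper's.
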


\begin{proof} 
{We may assume that $X$ is integral and we retain the notation from Construction~\ref{muconstruction}.}
{Let 
\[
A[s^{1/a}]= A[\sigma_0, \ldots , \sigma_n] /(\sigma_0^{a_0}-s_0,\ldots,\sigma_n^{a_n}-s_n).
\]
We have $Z= \Spec A[s^{1/a}]$, $Z$ is integral and is smooth over $k$, and we have a $\mu_a$-action on $Z$ with quotient $X$. Let $\pi : Z \rightarrow X=Z/\mu_a$ be the quotient map, induced by the inclusion  $A\hookrightarrow R := A[s^{1/a}]$.} Let $q \in Z$ be the unique point lying over $p \in X$. $(Z_\sigma, q)$ satisfies Assumption~\ref{assumption}, looking like a homogeneous singularity defined by $G(\sigma_0,..,\sigma_n):=F(\sigma_0^{a_0},\ldots,\sigma_n^{a_n})$ (see Construction \ref{muconstruction}). {$G$ has degree $e$ and} $V(G)$ is smooth by our assumption {on $F$}. We apply Theorem~\ref{mainprop} and construct the semi-stable reduction {$Y_Z\to \Spec\OO_e$ of $Z\to\Spec\OO$ by forming the blow-up $\hat{Z}:=Bl_qZ$, and letting $Y_Z$ be the normalisation of the base-change $\hat{Z}_e:=\hat{Z}\times_{\Spec\OO}\Spec\OO_e$. }

	{Since the $\mu_a$-action on $Z$ fixes $q$, this action lifts canonically to an action on $\hat{Z}$, which gives a $\mu_a$-action on $\hat{Z}_e$ over $\Spec\OO_e$ and finally induces a $\mu_a$-action on the normalisation $Y_Z$. Let $Y:=Y_Z/\mu_{a}$ and let $\pi:Y_Z \rightarrow Y$ denote the quotient map. Since $Y_Z\to Z_e$ is proper, it follows that the induced map on the quotients $Y\to X_e$ is also proper.} 
	
	{Let $E_1\subset \hat{Z}$ be the exceptional divisor, let $E_2\subset\hat{Z}$ be the strict transform of $Z_\sigma$ and let ${E_{12}}= E_1 \cap E_2$.}	Denote by $\widetilde{E_1}$, $\widetilde{E_2}$, $\widetilde{E_{12}}$ their {respective} coverings in $(Y_Z)_\sigma $, as in the proof of Theorem~\ref{mainprop}.  Let $\widetilde{D_i }:=\pi(\widetilde{E_i})=\widetilde{E_i}/\mu_a\subset Y_\sigma$. 
		{Since $Z$ is integral, it follows from Remark~\ref{RemarkIntegral} that  $Y_Z$ is integral and thus the quotient scheme $Y=Y_Z/\mu_a$ is integral as well.} 
	We use the standard presentation of the blow-up $\hat{Z}$ as
\[
\hat{Z}=\Proj A[s^{1/a}][Z_0,\ldots, Z_n]/(\{\sigma_iZ_j-\sigma_jZ_i\}_{0\le i,j\le n}) 
\]
giving the standard open cover of $\hat{Z}$ by the affine open subsets  $Z_i\neq0$. This induces the affine open cover $\{V_0,\ldots, V_n\}$  of $Y_{Z}$. As in the proof of Theorem \ref{mainprop}, we have the explicit description of the $V_i$, for instance, 
\[
	V_0 = \Spec( R_e[z_1,\ldots,z_n, z_{n+1} ] /( \{\sigma_i-\sigma_0  z_i\}_{1\le i\le n+1}  , g_0 - z_{n+1}^e )) 
	\]
	with  $R_e:=R\otimes_\OO\OO_e$, $z_i=Z_i / Z_0 $ for $i=1,\ldots,n$, $z_{n+1}=t' / \sigma_0 $ and $g_0=G(1,z_1,\ldots, z_n)+\sigma_0h'$ for suitable $h'$. Letting $A_e:=A\otimes_\OO\OO_e$, we can rewrite this as
\[
V_0 = \Spec( A_e [\sigma_0, z_1,\ldots,z_n, z_{n+1} ] /(\{s_i-\sigma^{a_i}_0  z^{a_i}_i\}_{1\le i\le n+1}  , g_0 - z_{n+1}^e, s_0-\sigma_0^{a_0} )).
	\]
{Again referring to  Theorem \ref{mainprop} and its proof, we have the global description of $\widetilde{E_1}$ as the closed subscheme $V(G(Z_0,\ldots, Z_n)-Z_{n+1}^e)$ of $\mathbb{P}^{n+1}_{k(q)}:=\Proj k(q)[Z_0,\ldots, Z_{n+1}]$, with $\widetilde{E_{12}}\subset \widetilde{E_1}$ defined by $Z_{n+1}=0$.  Finally, the projection $Y_Z\to Z$ restricts to a morphism $\pi_2:\widetilde{E_2}\to Z_\sigma$, $\pi_2$ is an isomorphism over $Z_\sigma\setminus\{q\}$ and the reduced inverse image $\pi_2^{-1}(q)$ is $\widetilde{E_{12}}$.}
	
Taking the $\mu_a$-quotients $U_i:=V_i/\mu_a$ gives the affine open cover $\{U_0,\ldots, U_n\}$ of $Y$.	Let us now describe the $\mu$-action {on $\hat{Z}_e$ and on $V_0$.} {For $\zeta\in \mu_{a_i}$, and $j=0,\ldots, n$, we have
\[
\zeta\cdot Z_j=\begin{cases} \zeta Z_i&\text{ for }j=i\\ Z_j&\text{ for } j\neq i \end{cases}
\]
and 
\[
\zeta\cdot \sigma_j=\begin{cases} \zeta \sigma_i&\text{ for }j=i\\ \sigma_j&\text{ for }  j\neq i.\end{cases}
\]
On the affine piece $V_0$,  and for  $\zeta\in \mu_{a_i}$, $i=1,\ldots, n$ and for $j=1,\ldots, n+1$, we thus have
\[
\zeta\cdot z_j=\begin{cases}\zeta z_i&\text{ for }j=i\\z_j&\text{ for } j\neq i,\end{cases}
\]
and $\zeta\cdot\sigma_0=\sigma_0$.}
{For $\zeta\in \mu_{a_0}$, we have $\zeta\cdot \sigma_0 =\zeta\sigma_0$ and
\[
\zeta\cdot z_j=\zeta^{-1}z_j 
\]
for all $j=1,\ldots, n+1$.
The $\mu_a$-action on the other open subschemes $V_i$ is defined similarly. We also have a global description of the $\mu_a$-action on $\widetilde{E_1}\subset \P^{n+1}_{k(q)}=\Proj k(q)[Z_0,\ldots, Z_{n+1}]$ by having $\mu_a$ act trivially on $Z_{n+1}$; one can easily check that this restricts to the action on each $V_i\cap \widetilde{E_1}$ defined above.
\\We have the following commutative diagram to our aid,
\[
\begin{tikzcd} [column sep = tiny, row sep = small]
	V_i \cap \widetilde{E_1}  \arrow[dd, hook]  \arrow[rd, hook]   \arrow[rr] & & U_i \cap \widetilde{D_1} \arrow[rd, hook] \arrow[dd, hook]&  \\
	&	\widetilde{E_1}  \arrow[rr, crossing over]  & &  \widetilde{D_1}   \arrow[dd, hook, ""] \\
	V_i \arrow[rd, hook] \arrow[rr] & & U_i \arrow[rd, hook] & \\
	&  Y_Z \arrow[rr] \arrow[from=uu, hook, crossing over]  \arrow[d, ""] & &  Y  \arrow[dd, ""] \\
	&	\hat{Z}_e  \arrow[d, ""] & &   \\
	&	Z_e \ar[ddr]\ar[rr]		&	& X_e\ar[ddr] \ar[d] & & \\
	& &	\hat{Z}  \arrow[d, ""] \arrow[from=uul, crossing over] &  B_e \ar[ddr] & \\
	& &	Z \arrow[rr, "\pi", crossing over] & & X \ar[d] \\
	& & & & B.
\end{tikzcd}
\] 
	We {will} describe the quotient by {$\mu_a$ in} two steps - first {taking the} quotient by the subgroup $\mu_{a_{>0}}:= \mu_{a_1} \times \ldots \times \mu_{a_n} $ and then by the remaining {factor} $\mu_{a_0} $.
	
{\textbf{Proof of (1).} The assertion (1) is local on $Y$, so it suffices to prove (1) after restricting to $U_i\subset Y$; we give the proof for $U_0$. We assume at first that $a_0>1$; the case $a_0=1$ is easier and will be dealt with at the end of the argument.} {Let
\[
C_0=A_e [\sigma_0, z_1,\ldots,z_n, z_{n+1} ] /( (s_i-\sigma^{a_i}_0  z^{a_i}_i)_{1\le i\le n+1}  , g_0 - (z_{n+1})^e, s_0-\sigma_0^{a_0} )
\]
and let $C\subset C_1\subset C_0$ be the rings of invariants
\[
C_1=C_0^{\mu_{a>0}}, C=C_0^{\mu_a}=C_1^{\mu_{a_0}},
\]
so $V_0=\Spec C_0$ and $U_0=\Spec C\subset Y$.  Since $V_0$ is smooth over $k$ and is integral, the invariant subrings $C, C_1$ are both integral and normal.}
\[
\begin{tikzcd} [column sep = normal, row sep = small]
	V_0 = \Spec C_0 \ar[r] \ar[d, hook] & V_0 / \mu_{a>0}  = \Spec C_1 \ar[r] & U_0 = \Spec C  \ar[d, hook] \\
	Y_Z \ar[rr] & & Y = Y_Z / \mu_a
\end{tikzcd}.
\] 
{We have $s_0\in C$ and $\sigma_0\in C_1$. We first show that $C[s_0^{-1}]$  is a smooth $\OO_e$-algebra. To see this, note that the special fibre $X_\sigma$ has only $p$ as singular point, so $A[s_0^{-1}]$ is a smooth $\OO$-algebra. Thus the base extension $A_e[s_0^{-1}]=A[s_0^{-1}]\otimes_\OO\OO_e$ is a smooth $\OO_e$-algebra. Moreover, since localization commutes with taking invariants,  $A_e[s_0^{-1}]$ is the $\mu_a$-invariants in $R_e[\sigma_0^{-1}]$, and since $\sigma_0$ defines $\widetilde{E_1}\cap V_0$ in $V_0$,
 $V_0\to \Spec R_e$ is an isomorphism over $\Spec R_e[\sigma_0^{-1}]$. This shows that 
$C[s_0^{-1}]=A_e[s_0^{-1}]$ and hence $C[s_0^{-1}]$ is a smooth $\OO_e$-algebra. \\ }
{The $\mu_{a>0}$-invariant subring of $A_e[\sigma_0, z_1,\ldots,z_n, z_{n+1} ]/(s_0-\sigma_0^{a_0})$ is
\[
[A_e[\sigma_0, z_1,\ldots,z_n, z_{n+1} ]/s_0-\sigma_0^{a_0})]^{\mu_{a>0}}=
A_e[\sigma_0, t_1,\ldots,t_n, z_{n+1} ]/(s_0-\sigma_0^{a_0}),
\]
with $t_i=z_i^{a_i}$. From this it follows that 
\[
C_1:=C_0^{\mu_{a>0}}=
A_e[\sigma_0, t_1,\ldots,t_n, z_{n+1} ]/(\{s_i-\sigma^{a_i}_0t_i\}_{i=1,\ldots, n}, f_0-z_{n+1}^e, s_0-\sigma_0^{a_0} ),
\]
where $f_0=F(1, t_1,\ldots, t_n)+\sigma_0\cdot h$ for a suitable $h$. Note that $\mu_{a_0}$ now acts by $\zeta \cdot t_i=\zeta^{-a_i}t_i$. }

{Our assumption that  $F$ defines a smooth quotient hypersurface in $\P(a)$ and our assumption $a_0>1$ implies that   $F(1,0,\ldots, 0)\neq 0$, that is
\[
\emptyset=V(\sigma_0, t_1,\ldots,t_n, z_{n+1})\cap V( f_0-z_{n+1}^e)\subset 
\Spec A_e[\sigma_0, t_1,\ldots,t_n, z_{n+1} ]/(s_0-\sigma_0^{a_0}, \{s_i-\sigma^{a_i}_0t_i\}_{i=1,\ldots, n}).
\]
The $\mu_{a_0}$-action on $\Spec A_e[\sigma_0, t_1,\ldots,t_n, z_{n+1} ]/(s_0-\sigma_0^{a_0}, \{s_i-\sigma^{a_i}_0t_i\}_{i=1,\ldots, n})$ is free outside the origin \\$V(\sigma_0, t_1,\ldots,t_n, z_{n+1})$.
Thus the $\mu_{a_0}$-action on $\Spec C_1$ is free and hence the ring extension $C\hookrightarrow C_1$ is \'etale. In particular,  $C_1[\sigma_0^{-1}]=C_1[s_0^{-1}]$ is \'etale over the smooth $k$-algebra $C[s_0^{-1}]$ and hence $C_1[\sigma_0^{-1}]$ is a smooth $k$-algebra.}

Since $\sigma_0$ is $\mu_{a>0}$-invariant, it follows that $(\sigma_0)C_1$ is the $\mu_{a>0}$-invariants in $(\sigma_0)C_0$, in other words
\[
(\sigma_0)C_1=C_1\cap (\sigma_0)C_0.
\]
This implies that the evident ring homomorphism $C_1/(\sigma_0)\to C_0/(\sigma_0)$ is injective and since $e$ is prime to the characteristic of $k$, taking $\mu_{a>0}$ invariants is an exact functor, and thus 
\[
C_1/(\sigma_0)= [C_0/(\sigma_0)]^{\mu_{a>0}}.
\]
Explicitly,
\[
C_0/(\sigma_0)= 
k(p )[z_1,\ldots,z_n, z_{n+1} ] /(G(1,z_1,\ldots, z_n) - z_{n+1}^e)
\]
Since $G(1,z_1,\ldots, z_n)=F(1, z_1^{a_1},\ldots, z_n^{a_n})$, $G(1,z_1,\ldots, z_n) - z_{n+1}^e$ is $\mu_{a>0}$ invariant, so as above, we have
\begin{align*}
C_1/(\sigma_0)=&[k(p )[z_1,\ldots,z_n, z_{n+1} ] /( G(1,z_1,\ldots, z_n) - z_{n+1}^e)]^{\mu_{a>0}}\\=&
k(p )[t_1,\ldots, t_n, z_{n+1}]/(F(1, t_1,\ldots, t_n)-z_{n+1}^e).
\end{align*}
Using again our smoothness assumption on $F$, we see that  $C_1/(\sigma_0)$ is a smooth $k$-algebra. By Lemma~\ref{lemma}, $C_1$ itself is a smooth $k$-algebra and since $C\hookrightarrow C_1$ is \'etale, $C$ is also a smooth $k$-algebra. }

\sloppy Similarly, to see that $V_0\to \Spec\OO_e$ is a semi-stable reduction, it suffices to see that the special fibre $\Spec C_1/(t')C_1$ is a simple normal crossing divisor on $\Spec C_1$. For this, we have $t'=\sigma_0z_{n+1}$.  We have already seen that
 $C_1/(\sigma_0)$ is a smooth $k$-algebra, in other words, the Cartier divisor $V(\sigma_0)$ on $\Spec C_1$ is smooth.  We have
 \[
 C_1/(z_{n+1}, \sigma_0)= k(p )[t_1,\ldots, t_n]/(F(1, t_1,\ldots, t_n))
 \]
 which again by our assumption on $F$ is a smooth $k$-algebra. This implies that the Cartier divisors $V(\sigma_0), V(z_{n+1})\subset \Spec C_1$ intersect transversely on $\Spec C_1$, which implies that $V(z_{n+1})$ is smooth in a neighbourhood of $V(\sigma_0)$ in $\Spec C_1$; this also implies that $(t')=(\sigma_0)\cap(z_{n+1})$. We have also shown that $C[s_0^{-1}]$ is smooth over $\OO_e$, which implies that $C_1[\sigma_0^{-1}]$ is also smooth over $\OO_e$, so $V(z_{n+1})\setminus V(\sigma_0)$ is smooth. Thus the Cartier divisor $V(t')$ on $\Spec C_1$ is $V(\sigma_0)+V(z_{n+})$, which we have just shown is a normal crossing divisor. This completes the proof of (1), and also shows that $Y_\sigma$ is a union of two smooth components, intersecting transversely, proving the first part of (2).
 
In case $a_0=1$, we have $C=C_1$ and a much simpler version of the arguments given above takes care of this case.

 \textbf{Proof of (2).} We have just shown that $Y_{\sigma}$ is the Cartier divisor   $\widetilde{D_1}+\widetilde{D_2}$, with  $\widetilde{D_1}, \widetilde{D_2}$ both smooth and with transverse intersection $\widetilde{D_{12}}$.   We have the global description of $\widetilde{E_1}$ given by Theorem~\ref{mainprop}, namely $\widetilde{E_1}$ is the closed subscheme 
 $V(G(Z_0,\ldots, Z_n)-Z_{n+1}^e)$ of $\P^{n+1}_{k(q)}$. We have
 \[
 \widetilde{D_1}=\widetilde{E_1}/\mu_a.
 \]
 The $\mu_a$-action on $\widetilde{E_1}$ extends to an action on $\P^{n+1}_{k(q)}=\Proj k(q)[Z_0,\ldots, Z_n, Z_{n+1}]$ as described in the proof of (1) by having $\mu_a$ act trivially on $Z_{n+1}$. Then
 \[
 \P^{n+1}_{k(q)}/\mu_a=\P(a_0,\ldots, a_n,1).
 \]
 Let $a_{n+1}=1$, let $T_i=Z_i^{a_i}$, $i=0,\ldots,  n+1$, and let $K \subset  \P(a_0,\ldots, a_n,1)$ be the hypersurface $V(F-T_{n+1}^e)$. We wish to identify $\widetilde{D_1}$ with $K$. Let $W_i\subset \P(a_0,\ldots, a_n,1)$ be the open subscheme $T_i\neq0$. Giving $T_j$ weight $a_j$, we have
 \[
 W_i=\Spec k(p )[T_0,\ldots, T_{n+1}][T_i^{-1}]_0 .
 \]
 We concentrate on the case $i=0$ to simplify the notation. In the diagram
 		\[
 \begin{tikzcd}[row sep=huge ]
 	\widetilde{E_1} \cap V_0 = V_{ \mathcal{W}_0}(G(1,z_1,...,z_n) - z_{n+1}^e) \arrow[r, hook ] \arrow[d,  "/ \mu_{a_{>0}}"]  &   \mathcal{W}_0  = \Spec k(p)[z_1,...,z_{n+1}] \arrow[d, ""] \arrow[r, ] &  \P^{n+1} \arrow[dd, "/ \mu_a"] \\
 	V_{\bar{W}_0}(F(1,t_1,...,t_n) - z_{n+1}^e ) \arrow[r, hook]  \arrow[d, "/ \mu_{a_{0}}"] &  \bar{W}_0  = \Spec k(p)[t_1,...,t_n,z_{n+1}]  \arrow[d]  &  \\
 	\widetilde{D_{1}} \cap U_0  = V_{W_0}(F(1,t_1,...,t_n) - z_{n+1}^{e}) \arrow[r, hook]  & {W}_0 \cong \Spec k(p)[t_1,...,t_n,z_{n+1}]^{\mu_{a_0}} \arrow[r] &  \P(a,1)  \end{tikzcd}	\] 
 The first row describes the restriction of the embedding of $\widetilde{E_1}$ in $\P^{n+1}$ to the affine $\mathcal{W}_0$, as in the proof of Theorem \ref{mainprop}. The objects in the rest of the diagram are defined and discussed below.
  Let $S_0:= k(p )[T_0,\ldots, T_{n+1}][T_0^{-1}]_0$ and let $S_0':=[k(p )[t_1,\ldots, t_n, z_{n+1}]]^{\mu_{a_0}}$, with the $\mu_{a_0}$ action as defined in the proof of (1). A direct computation shows that $S_0\cong S_0'$.   Indeed, a monomial $\prod_jt_j^{b_j}\cdot z_{n+1}^{b_{n+1}}$ is $\mu_{a_0}$-invariant if and only if $\sum_{j=1}^{n+1}a_jb_j$  is divisible by $a_0$. Similarly, a monomial $\prod_{j=1}^{n+1}T_j^{b_j}\cdot T_0^{-b_0}$ has weighted degree zero if and only if $\sum_{j\ge1}a_jb_j=a_0b_0$. So, sending $\prod_{j=1}^{n+1}T_j^{b_j}\cdot T_0^{-b_0}$ to 
 $\prod_jt_j^{b_j}\cdot z_{n+1}^{b_{n+1}}$ gives an isomorphism of $S_0$ with $S_0'$.
 
 Similarly, recalling that $a_0$ divides $e$,  the weighted homogeneous polynomial $F(T_0,\ldots, T_n)-T_{n+1}^e$ gives the element $F(T_0,\ldots, T_n)/T_0^{e/a_0}-T_{n+1}^e/T_0^{e/a_0}$ in $S_0$, which corresponds to the element $F(1, t_1,\ldots, t_n)-z_{n+1}^e$ of 
 $[k(p )[t_1,\ldots, t_n, z_{n+1}]]^{\mu_{a_0}}$.
 
Let $\bar{W}_0:=\Spec k(p )[t_1,\ldots, t_n, z_{n+1}]$. The finite extension 
\[
[k(p )[t_1,\ldots, t_n, z_{n+1}]]^{\mu_{a_0}}\to k(p )[t_1,\ldots, t_n, z_{n+1}] 
\]
defines a finite morphism $p:\bar{W}_0\to W_0$. By our computations in the proof of (1) and that given in the previous paragraph, we see that
\[
p^{-1}(K\cap W_0)=V(F(1, t_1,\ldots, t_n)-z_{n+1}^e)=\Spec C_1/(\sigma_0)=(\widetilde{E_1}\cap V_0)/\mu_{a>0},
\]
and thus 
\[
K\cap W_0=(\widetilde{E_1}\cap V_0)/\mu_a=\widetilde{D_1}\cap W_0 .
\]
An analogous computation shows that $K\cap W_i=\widetilde{D_1}\cap W_i$ for $i=1,\ldots, n+1$, so $\widetilde{D_1}=K=V(F-T_{n+1}^e)$, as desired. 

A similar argument shows that $\widetilde{D_{12}}=V(F-T_{n+1}^e)\cap V(T_{n+1})$, in other words, $\widetilde{D_{12}}=V(F)\subset \P(a)$. 

In the proof of (1), we showed that the projection $U_0\setminus V(s_0)\to X_e\setminus V(s_0)$ is an isomorphism; a similar argument shows that  $U_i\setminus V(s_i)\to X_e\setminus V(s_i)$ is an isomorphism for all $i$. This shows that $Y\setminus \widetilde{D_1}\to X_e\setminus \{p\}$ is an isomorphism. Passing to the fibre over the closed point of $\Spec\OO_e$, it follows that $\widetilde{D_2}\setminus\widetilde{D_{12}}\to X_\sigma\setminus\{p\}$ is an isomorphism. Since $\widetilde{D_2}$ is smooth, $\widetilde{D_2}\setminus\widetilde{D_{12}}$ is dense in $\widetilde{D_2}$ and $\widetilde{D_2}\to X_\sigma$ is proper, we see that $q:\widetilde{D_2}\to X_\sigma$ is a resolution of singularities of $X_\sigma$, with $q^{-1}(p )=\widetilde{D_{12}}$, finishing the proof of (2). 
\end{proof}

\begin{Cor}\label{CorLocalqHomogFormula}
	Let $f:X\to \Spec\OO$ be a flat quasi-projective morphism with $X$ smooth over $k_0$ and with $X_\eta$ smooth over $\eta$. Suppose that the special fibre $X_\sigma$ has an isolated singular point $p$, at which $f$ looks at like the weighted homogeneous singularity defined  $F\in k(p)[T_0,\ldots, T_n]$ of weighted degree $e$ (Definition~\ref{qdef}),  with $V(F)\subset \P_{k(p)}(a_*)$ a smooth quotient hypersurface satisfying assumption~\ref{qassumption} (2). Assuming $\Psi_f \one$ is dualisable (e.g. in zero characteristic), we have the formula
	\[
	\chic(\Psi_f({\one_X}_{\eta})|_p)= \chic(V(F-T_{n+1}^{e}))-
	\<-1\> \chic(V(F)).
	\]
\end{Cor}

\begin{proof}
	At the singular point $p_i$ of $X_\sigma$, by Theorem~\ref{qmainprop}, there is an affine open neighbourhood $U$ of $p$ in $X$ such that the restriction $f_U:U\to \Spec\OO$ admits a semi-stable reduction $Y\to \Spec\OO_e$, with special fibre $Y_\sigma=\widetilde{D_1}+\widetilde{D_2}$, with $\widetilde{D_1}\cong V(F - T_{n+1}^e)$ the preimage of the point $p$, and with $\widetilde{D_{12}}=V(F)$. By Corollary \ref{semistablered}
	$
	\chic(\Psi_f \one_{X_{\eta}} |_p) = \chic (\Psi_g \one_{Y_\eta}|_{\widetilde{D_{1}}}),
	$
	and we get the result by applying Proposition~\ref{example2} and Theorem~\ref{qmainprop}.
\end{proof}

\begin{Rem} \label{qRem}
	 This formula can be viewed as an extension of the formula appearing in Corollary \ref{CorLocalHomogFormula} (and hence of Proposition \ref{AIS} in the case considered), as the expressions for the coverings $\widetilde{D_{1}}$, $\widetilde{D_{12}}$ are the same as the Denef-Loeser covers considered in those previous propositions.
\end{Rem}

\begin{Cor} \label{CorWtHomogFormula} Let $f:X\to \Spec\OO$ be a quasi-projective flat morphism with $X$ smooth over $k_0$. 
Suppose that $X_\eta$ is smooth over $\eta$ and the special fibre $X_\sigma$ is satisfying Assumption~\ref{qassumption} with singular points $p_1,\ldots, p_r$; for each $i$   $(X_\sigma, p_i)$ looks like the weighted homogeneous singularity defined by a weighted homogeneous polynomial $F_i$ of weighted degree $e_i$ for weights $a_*^{(i)}$ (Definition~\ref{qdef}); assume $\Psi_f \one$ is dualisable. Let $X_\sigma^\circ=X_\sigma\setminus\{p_1,\ldots, p_r\}$. Then
\[
\chic(\Psi_f({\one_X}_{\eta}))=\chic(X_\sigma^\circ)+ \sum_{i=1}^r \chic(V(F_i-T_{n+1}^{e_i}))-
\<-1\> \sum_{i=1}^r \chic( V(F_i)).
\]
\end{Cor}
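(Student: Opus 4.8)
The plan is to reduce to the single–singularity case already established in Theorem~\ref{qmainprop} and then reassemble the local contributions by means of Proposition~\ref{local}. Since $P=\{p_1,\ldots,p_r\}$ is finite, Proposition~\ref{local} gives
\[
\chic(\Psi_f({\one_X}_{\eta}))=\sum_{i=1}^r\chic(\Psi_f({\one_X}_{\eta})|_{p_i})+\chic(X_\sigma^\circ),
\]
so it is enough to show that for each $i$ the local term satisfies
\[
\chic(\Psi_f({\one_X}_{\eta})|_{p_i})=\chic(V(F_i-T_{n+1}^{e_i}))-\<-1\>\chic(V(F_i)).
\]

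Fix $i$. First I would pass to an affine open neighbourhood $X_i=\Spec A_i\subset X$ of $p_i$ that is small enough to carry a system of étale coordinates extending the regular parameters $s^{(i)}_*$ witnessing the ``looks like'' condition at $p_i$, and such that every step of Construction~\ref{muconstruction} for $(X_i,p_i,F_i,s^{(i)}_*,a^{(i)}_*)$ can be performed on $X_i$ itself; each of the finitely many conditions involved holds on some open around $p_i$, so such an $X_i$ exists. As $X_i\hookrightarrow X$ is an open, hence smooth, immersion, Property~\ref{property} yields $\Psi_f({\one_X}_{\eta})|_{X_{i,\sigma}}\simeq\Psi_{f|_{X_i}}({\one_{X_i}}_{\eta})$, so in particular the two local terms at $p_i$ coincide. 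Since $p_i$ is the only singular point of $X_{i,\sigma}$, applying Proposition~\ref{local} to $f|_{X_i}$ gives $\chic(\Psi_f({\one_X}_{\eta})|_{p_i})=\chic(\Psi_{f|_{X_i}}({\one_{X_i}}_{\eta}))-\chic(X_{i,\sigma}^\circ)$, where $X_{i,\sigma}^\circ:=X_{i,\sigma}\setminus\{p_i\}$. Now I would invoke Theorem~\ref{qmainprop}: part~(3) gives
\[
\chic(\Psi_{f|_{X_i}}({\one_{X_i}}_{\eta}))=\chic(\widetilde{D_1^\circ})+\chic(\widetilde{D_2^\circ})-(\<-1\>-\<1\>)\cdot\chic(\widetilde{D_{12}}),
\]
and part~(2) identifies $\widetilde{D_1}\simeq V(F_i-T_{n+1}^{e_i})$, $\widetilde{D_{12}}\simeq V(F_i)$ (embedded in $\widetilde{D_1}$ as the locus $T_{n+1}=0$), and exhibits the projection $\widetilde{D_2}\to X_{i,\sigma}$ as a resolution with $\widetilde{D_2}\setminus\widetilde{D_{12}}\simeq X_{i,\sigma}^\circ$. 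Hence $\chic(\widetilde{D_2^\circ})=\chic(X_{i,\sigma}^\circ)$, and Proposition~\ref{cutandpaste} gives $\chic(\widetilde{D_1^\circ})=\chic(V(F_i-T_{n+1}^{e_i}))-\chic(V(F_i))$. Substituting these, and using that $\<1\>$ is the unit of $\GW(k)$ so that $-\chic(V(F_i))+\<1\>\chic(V(F_i))=0$, the $\chic(V(F_i))$ contributions collapse to
\[
\chic(\Psi_{f|_{X_i}}({\one_{X_i}}_{\eta}))=\chic(V(F_i-T_{n+1}^{e_i}))-\<-1\>\chic(V(F_i))+\chic(X_{i,\sigma}^\circ).
\]
Subtracting $\chic(X_{i,\sigma}^\circ)$ gives the asserted value of the local term, and plugging back into the Proposition~\ref{local} decomposition finishes the proof.

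I expect no serious obstacle here; the argument is formal once Theorem~\ref{qmainprop} is available. The only point demanding a little care is the localisation step — verifying that the auxiliary hypotheses of Theorem~\ref{qmainprop} (affineness, global étale coordinates, applicability of Construction~\ref{muconstruction} with no further shrinking) can all be arranged on a single open neighbourhood of $p_i$, and that shrinking $X$ to that neighbourhood leaves the local contribution $\chic(\Psi_f({\one_X}_{\eta})|_{p_i})$ unchanged, which is exactly Property~\ref{property} for the open immersion $X_i\hookrightarrow X$. An entirely equivalent alternative would be to mimic the Mayer–Vietoris induction on $r$ used in the proof of Corollary~\ref{CorHomogFormula}, splitting off one singular point at a time with $U_1=X\setminus\{p_1\}$ and $U_2=X\setminus\{p_2,\ldots,p_r\}$ (so $U_{12}$ has smooth special fibre and contributes $\chic(X_\sigma^\circ)$), and applying Proposition~\ref{MV}; this route still relies on the single–point case treated exactly as above.
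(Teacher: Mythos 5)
Your argument is correct: the localisation to an affine neighbourhood $X_i$ of $p_i$ on which Theorem~\ref{qmainprop} applies is legitimate, Property~\ref{property} for the open immersion $X_i\hookrightarrow X$ does identify the two local terms, and the bookkeeping with $\chic(\widetilde{D_1^\circ})=\chic(V(F_i-T_{n+1}^{e_i}))-\chic(V(F_i))$, $\chic(\widetilde{D_2^\circ})=\chic(X_{i,\sigma}^\circ)$ and $(\<-1\>-\<1\>)\chic(\widetilde{D_{12}})$ collapses exactly as you say. The paper organises the reduction differently: it runs an induction on $r$ using Mayer--Vietoris (Proposition~\ref{MV}), exactly as in Corollary~\ref{CorHomogFormula}, and in the single-singularity case covers $X$ by the good neighbourhood $U$ and $V=X\setminus\{p\}$, computing $\chic(\Psi_{f_U}(\one_U))$ from Corollary~\ref{semistablered} and Example~\ref{example} (which is what Theorem~\ref{qmainprop}(3) packages) and then gluing by Mayer--Vietoris; you instead apply Proposition~\ref{local} twice --- once globally and once on $X_i$ --- to isolate the local contribution $\chic(\Psi_f({\one_X}_{\eta})|_{p_i})=\chic(V(F_i-T_{n+1}^{e_i}))-\<-1\>\chic(V(F_i))$ and then reassemble. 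The two routes are equivalent in substance (both rest on Property~\ref{property}, cut-and-paste, and Theorem~\ref{qmainprop}), but yours has the small advantage of explicitly producing the pointwise identity for $\chic(\Psi_f(\one)|_{p_i})$, which is essentially the computation the paper redoes later in the proof of Theorem~\ref{mainthm}, while the paper's Mayer--Vietoris induction keeps the proof uniform with the homogeneous case and never needs to manipulate restrictions to points.
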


\begin{proof} Immediate from Proposition~\ref{local} and the previous Corollary.
\end{proof}

\section{Comparison of local Euler classes}

{In this section we discuss a motivic local invariant, \emph{the $\Aa^1$-local Euler class}, as it is defined in \cite{Le20a} and \cite{BW}, that gives an effective tool for computing the quadratic Euler characteristic of the nearby cycles. We will show that, for the type of morphism $f:X\to\Spec\OO$ that we have been considering, when $f$ looks at a point $p\in X_\sigma$ like a (weighted) homogeneous singularity defined by a (weighted) homogeneous polynomial $F(T_0,\ldots, T_n)$, the local Euler class at $p$ for $df$ is the same as the  local Euler class for the map $F:\Aa^{n+1}\to \Aa^1$ at the origin $0\in \Aa^{n+1}$ (see Definition~\ref{HypersurfDef} and Corollary~\ref{Eulerclass} for a precise statement).}

\subsection{The local Euler class}
 
 We recall here some preliminary definitions and define $\Aa^1$-local Euler class with respect to a section of a vector bundle following \cite[5.1]{BW}.
 
\begin{Def}
{ For a vector bundle $p: V \to X$ with zero section $s_0 : X \to V$, and dual bundle $V^*$, define the functor $ \Sigma^{V^*} : \SH(X) \to \SH(X) $ by $\Sigma^{V^*} := p_\# s_{0*}$.}
 
{We have the identity} $\Sigma^{V^*} \one_X  = V / ( V \setminus X )\in \SH(X)$, see \cite[5.2]{Hoy}.

\end{Def}

 \begin{Def}
 	Let $S$ be a scheme, let $E \in \SH(S)$, $f: X \to S$ an $S$-scheme, $i : Z \hookrightarrow X $ a closed subscheme, and $p: V \rightarrow X$ a vector bundle.
 	We define the $V$-twisted $E$-cohomology of $X$ with support on $Z$, which we denote $E_Z^V(X)$, to be
 	\[ E_Z^V(X) = [\one_S, f_* i_! \Sigma^{i^*V} i^! f^* E]_{\SH(S)} \simeq [X / (X \setminus Z) , \Sigma^{V} f^* E]_{\SH(X)} ; \]
 	see \cite[4.2.1]{BW}. \\
 	When $Z=X$, we drop $Z$ from the notation. We also denote $E^n_Z(X) = E_Z^{\OO_X^{\otimes n}}(X)$. \\
 	For $\mathcal{L}$ a line bundle over $X$, we put $E^n_Z(X, \mathcal{L})=E^{n-1 + \mathcal{L}}_Z(X)$.
 \end{Def}

\begin{Def} 
	Let $E \in \SH(S) $ be a motivic ring spectrum. We denote by $(V, \rho)$ pairs consisting of a vector bundle $p:  V \to X$ and an isomorphism $ \rho : det V \xrightarrow{\sim} \OO_X $ . \\
		 An \textit{$SL$-orientation} on $E$ is an assignment of an element $th(V, \rho) \in E_0^{p^*V^*}(V) $ for each such pair $(V, \rho)$, satisfying some axioms as in \cite[Definition 3.4]{LR}.	An \textit{$SL$-oriented ring spectrum} $E$ is a motivic ring spectrum $E \in \SH(S)$ together with a given $SL$-orientation $th(-,-)$. If $E$ is an $SL$-oriented motivic spectrum, and $p: V \to X$ is a vector bundle of rank $n$, we have $E^V_Z(X) = E^n(X, det V)$. \\
		Let $k$ be a field. The motivic ring spectrum that we commonly use in this paper is $E = H\mathcal{K}^{MW} \in \SH(k) $, the Eilenberg MacLane spectrum representing the Milnor-Witt homotopy module $\mathcal{K}^{MW}_*$; it admits a canonical $SL$-orientation. For details on the construction of this motivic spectrum and its $SL$-orientation see \cite[Section 3]{Le20a}; for the definition of $\mathcal{K}^{MW}$ see \cite[Section 6]{Mo}. \\
	Let $X$ be a smooth scheme over a perfect field $k$ and $p \in X$ a closed point. Then we have an isomorphism $ (H\mathcal{K}^{MW} )^n (X, \omega_{X/k})  \simeq \GW(k(p)) $ (\cite[Cor. 3.3]{Le20a}), so classes in cohomology groups defined by this motivic ring spectrum can be expressed by quadratic forms.
	 We also use the notation $H^n_Z(X, \mathcal{K}^{MW}(\mathcal{L}))$ for the group $(H\mathcal{K}^{MW} )^n_Z(X, \mathcal{L}) $.
\end{Def}
 
 \begin{Def} \label{LocalEulerClass}
 Let $V \rightarrow X$ be a vector bundle of rank n, $s: X \rightarrow V$ a section and $i: Z  =Z(s) \hookrightarrow X $ the zero locus of $s$. The local Euler class of $(V,s)$, also called the refined Euler class, is the element $e(V,s) \in E_Z^{V^*}(X)$  defined by the composition
 \[ X/X \setminus Z \xrightarrow{s} V/V\setminus 0 \simeq \Sigma^{V^*} \one_k \rightarrow \Sigma^{V^*} E|_X \in \SH(X) . \]  
 \end{Def}

\begin{Rem} \label{LocalEulerClassRem}
 
In the case of an $SL$-oriented theory $E$, and a rank $n$ bundle $V$, we have $E_Z^{V^*}(X)=E_Z^n(X,  det^{-1}V) $, giving the local Euler class $e(V,s)\in E_Z^{V^*}(X)=E_Z^n(X,  det^{-1}V) $. \\
{We also have the {\em Thom class} $th(V) \in E_{0_V}^{p*V^*}(V)$, defined as the local Euler class $e(t, p^*V)$, where $t:V\to p^*V$ is the tautological section (with zero-locus the zero-section in $V$).} In that case, 
\[ e_Z(V,s) = s^* th(V) \in E_Z^{V}(X), \]
  see \cite[Def. 5.12]{BW}.
\end{Rem} 
 
\begin{Ex} \label{LocaEulerClassEx}
	
 In the case {the section $s$ has as the zero locus $Z$ a single point $p$, then for}  $E = H\mathcal{K}^{MW}  $, $V= \Omega_{X/k} $, we have  $e_p(\Omega_{X/k}, s) \in H\mathcal{K}^{MW}_p(X, \omega_{X/k})$. By the purity isomorphism for $H\mathcal{K}^{MW}$, this latter group is canonically isomorphic to $\GW(k(p))$. This element can be computed as the Scheja-Storch quadratic form on the Jacobian ring at the point, see \cite[Cor. 3.3]{Le20a} and below \ref{SSEulerClass}.
  \end{Ex}
 
\begin{Def}
		Let $X\in \Sm_k$ and let $\Omega_{X/k}$ be the sheaf of K\"ahler differentials. Let $f:X\to \Spec\OO$ be a flat morphism with an isolated critical point $p\in X_\sigma$, so the section $df\in H^0(X, \Omega_{X/k})$ has zero locus $Z(s)=\{p\}$ in a neighbourhood of $p$. We define the \emph{quadratic Milnor number at $p$} by
 \[
 \mu^q_{f,p} : = e_p(\Omega_{X/k}, s) \in \GW(k(p)).
 \] 
 \end{Def}

\subsection{Comparing Euler classes}

\begin{Def}\label{HypersurfDef} Let $\kappa$ be a field, let $a_*=(a_0,\ldots, a_n)$ be a sequence of weights and let $F(T_0,\ldots, T_n)\in \kappa[T_0,\ldots, T_n]$ be an $a_*$-weighted homogeneous polynomial of weighted degree $e$. Let $\OO_\kappa=\kappa[t]_{(t)}$; we denote the closed point of $\Spec\OO_\kappa$ by $\sigma_\kappa$ and the generic point by $\eta_\kappa$. \\
We assume that the $a_i$ are pairwise relatively prime, that $a_i$ divides $e$ for all $i$ and that $V(F)\subset \P_\kappa(a_*)$ is a smooth quotient hypersurface; in particular $e$ is prime to the exponential characteristic of $\kappa$. \\
Define $H^F\subset \P_{\OO_\kappa}(a_*,1)$ to be the hypersurface 
$V(F-tT_{n+1}^e)$, and let $f_F:H^F\to \Spec\OO_\kappa$ denote the projection. 
\end{Def}
One can see that $H^F$ is smooth over $\kappa$, the generic fibre $H^F_{\eta_\kappa}$ is smooth over $\eta=\Spec\kappa(t)$ and the special fibre $H^F_{\sigma_\kappa}$ has a single isolated singular point $0:=(0:\ldots:0:1)$.

{We return to our main object of study, a quasi-projective flat map $f:X\to \Spec\OO$ with an isolated critical point $p\in X$. Our goal is to show that, under the assumption that $f$ looks near $p$ like a quasi-homogeneous singularity defined by a polynomial $F\in k(p )[T_0,\ldots, T_n]$,  the local Euler class $e_p(\Omega_{X/k_0}, df)$ at the critical point $p\in X$ is equal to the local Euler class $e_0(\Omega_{H_F/k(p )}, df_F)$. By $df$ we mean the section $d(f^*(t))$ of $\Omega_{X/k_0}$, and similarly for $df_F$.  We first make some elementary simplifications.}

{First of all, due to the Nisnevich descent enjoyed by all cohomology theories defined by motivic spectra, the local Euler class $e_p(\Omega_{X/k_0}, df)\in \GW(k(p ))$ is unchanged if we replace $(X,p)$ by a Nisnevich neighbourhood $(X',p)\to (X,p)$, and also depends only on $df$ restricted to $\Spec\OO_{X,p}$. Thus, we may replace $X$ with $\Spec\OO_{X,p}$, and, changing notation, assume that $X=\Spec\OO_{X,p}$ is local. Similarly, we may assume that the local ring $\OO_{X,p}$ contains its residue field $k(p )$; changing notation, we may assume that $k(p )=k$. The special fibre $X_\sigma$ is just the subscheme of $X$ defined by $f\in \OO_{X,p}$, so we may replace $f : X \to \OO$ be the  morphism $f:X\to \Spec k[t]$ given by the $k$-algebra homomorphism $t\mapsto f^*(t)$. Choosing a system of parameters $s_0,\ldots, s_n$ so that $f^*(t)=F(s_0,\ldots, s_n)+h$ as in Assumption~\ref{qassumption}, we have the morphism over $\Spec k[\lambda]$,
\[
f_\lambda:X\times\Spec k[\lambda]\to \Spec k[t,\lambda],
\]
defined by $f_\lambda^*(t):=F(s_0,\ldots, s_n)+\lambda\cdot h$. }

\begin{Prop} \label{Euler1}
Let $X = \Spec \OO_{X,p}$ be a flat $\Aa^1$-family $f:X \to \Spec k[t]$, satisfying Assumption \ref{assumption} (homogeneous case) or \ref{qassumption} (quasi-homogeneous case). Define $\mathcal{X} = \Spec{\OO_{X,p}}[\lambda] = X \times \Aa^1$. Let $f_\lambda : X \times \Aa^1 \rightarrow \Aa^1 \times \Aa^1$ be defined as above,
and let $\mathcal{X}_\sigma = f_\lambda^{-1} (0 \times \Aa^1)$ with induced morphism $(f_\lambda)_\sigma: \mathcal{X}_\sigma \to \Aa^1$. Then there exists an open neighbourhood $U \supset p \times \Aa^1 $ in $\mathcal{X}_\sigma$, such that $U \setminus (p \times \Aa^1 )  $ is smooth over $\Aa^1$. 
\end{Prop}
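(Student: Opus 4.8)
The plan is to reduce the question to the geometry of the semi-stable (or quasi-semi-stable) reduction already constructed, which was the whole point of Theorem~\ref{mainprop} and Theorem~\ref{qmainprop}. Concretely, I would carry out the blow-up construction not for $f$ but for the family $f_\lambda$ over the two-dimensional base $\Spec k[t,\lambda]$, and then observe that away from the special locus $p\times\Aa^1$ the morphism $(f_\lambda)_\sigma$ to the $\lambda$-line is, up to an \'etale change of coordinates, the same picture one sees for a single $f$. Let me describe the steps.

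First I would set up local coordinates exactly as in Definition~\ref{qdef}: choose a regular system of parameters $s_0,\ldots, s_n$ for $\mathfrak m_p\subset \OO_{X,p}$ with $f^*(t)=F(s_0,\ldots, s_n)+h$, $h\in \mathfrak m_p\cdot\mathfrak m_{p,s_*,a_*}^{(e)}$ (in the homogeneous case $a_*=(1,\ldots,1)$ and $h\in\mathfrak m_p^{e+1}$), so that $f_\lambda^*(t)=F(s_0,\ldots,s_n)+\lambda h$. Since $X=\Spec\OO_{X,p}$ is local I may assume $\OO_{X,p}\supset k(p)$ and, changing notation, $k(p)=k$; then $s_0,\ldots,s_n$ give an \'etale morphism to $\Aa^{n+1}_k$ on a suitable representative. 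The scheme $\mathcal X_\sigma=f_\lambda^{-1}(0\times\Aa^1)$ is $\Spec \OO_{X,p}[\lambda]/(F(s_*)+\lambda h)$, a hypersurface in the smooth $k$-scheme $\mathcal X=X\times\Aa^1_\lambda$. The claim is local around $p\times\Aa^1$ in $\mathcal X_\sigma$, and I want smoothness of $U\setminus(p\times\Aa^1)$ \emph{over $\Aa^1_\lambda$}.

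Next, the key computation: a point $x\in\mathcal X_\sigma$ lying over $\lambda=\lambda_0$ and not on $p\times\Aa^1$ is a point where the fibre $(f_\lambda)_\sigma^{-1}(\lambda_0)=V(F(s_*)+\lambda_0 h)\subset X$ is smooth, provided $x$ is close enough to $p\times\Aa^1$. This is exactly the content built into Proposition~\ref{firstprop} / Theorem~\ref{mainprop} (and Theorem~\ref{qmainprop} in the weighted case): after blowing up $p$ (resp.\ performing Construction~\ref{muconstruction} and then blowing up $q$), one writes $F(s_*)+\lambda h = s_0^e\cdot g_0$ (resp.\ $=\sigma_0^e g_0$) on the chart $U_0$, with $g_0\equiv F(1,t_1,\ldots,t_n)\bmod(s_0,\lambda)$, and the exceptional locus meets the strict transform of $\mathcal X_\sigma$ in the hypersurface cut out by $\bar g_0$, which is $V(F)$ up to the $\lambda$-dependent higher-order terms. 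Since $V(F)\subset\P^n_k$ (resp.\ the smooth quotient hypersurface condition) is smooth, shrinking to a neighbourhood $U$ of $p\times\Aa^1$ makes the strict transform smooth and transverse to the exceptional divisor \emph{for every value of $\lambda$ simultaneously}; because $\lambda$ enters only through the unit-times-higher-order perturbation $g_0$, these smoothness/transversality statements are open conditions satisfied identically along $p\times\Aa^1$ and hence on a neighbourhood. Pulling this back via $s_0=0$ (the equation defining the exceptional divisor) and remembering that on $\mathcal X_\sigma$ away from $p\times\Aa^1$ we have $s_0$ invertible (or the corresponding coordinate), so $\mathcal X_\sigma\setminus(p\times\Aa^1)$ is isomorphic near $p\times\Aa^1$ to the strict transform minus the exceptional divisor, which maps smoothly to $\Aa^1_\lambda$.

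The place I expect the real work to sit is in checking that the perturbation by $\lambda h$ does not destroy smoothness \emph{uniformly in $\lambda$} — i.e.\ that one genuinely gets a single open $U\supset p\times\Aa^1$ rather than a $\lambda$-dependent family of neighbourhoods. The way to handle this is to repeat the ring-theoretic computation of Theorem~\ref{qmainprop}(1) verbatim but over the base $\OO_{X,p}[\lambda]$ instead of $\OO_{X,p}$: form $Bl_{p\times\Aa^1}(\mathcal X)$ (resp.\ the $\mu_a$-construction followed by $Bl_{q\times\Aa^1}$), write out the chart algebras $C_0,C_1,C$ with an extra variable $\lambda$, and note that $\lambda$ appears only inside the ``suitable $h'$'' correction term, which lies in the maximal ideal; the Jacobian criterion computations that showed $C_1/(\sigma_0)$ is a smooth $k$-algebra now show $C_1/(\sigma_0)$ is a smooth $k[\lambda]$-algebra, because the relevant partial derivatives are those of $F(1,t_1,\ldots,t_n)-z_{n+1}^e$, which are independent of $\lambda$, and smoothness of $V(F)$ is what makes the Jacobian matrix of full rank along the whole $\lambda$-line. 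Then $U$ is simply the union of the images in $\mathcal X_\sigma$ of the smooth loci of these charts; $U\setminus(p\times\Aa^1)$ is smooth over $\Aa^1_\lambda$ because the resolution map is an isomorphism there and the source is flat with smooth fibres over $\Aa^1_\lambda$. I would invoke Lemma~\ref{lemma} if needed to pass from ``Cartier divisor with smooth total space minus divisor and smooth divisor'' to smoothness of the ambient chart, exactly as in the proof of Theorem~\ref{qmainprop}.
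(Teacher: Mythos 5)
Your construction follows the paper's route: blow up $\mathcal{X}=X\times\Aa^1_\lambda$ along $p\times\Aa^1$ (preceded by Construction~\ref{muconstruction} and followed by the $\mu_a$-quotient in the weighted case), observe that on the strict transform of $\mathcal{X}_\sigma$ the intersection with the exceptional locus is $V(F)\times\Aa^1$ --- the perturbation $\lambda h$ is killed by the exceptional equation --- and use the smoothness of $V(F)$, which is independent of $\lambda$, to get smoothness of the fibres over $\Aa^1_\lambda$ along this locus. Two caveats on the implementation. First, the algebra you actually compute, $C_1/(\sigma_0)$, describes the exceptional cover $\widetilde{D_1}$ of the semi-stable reduction, which is not the object at issue; what is needed is smoothness over $\Aa^1_\lambda$ of the strict transform of $\mathcal{X}_\sigma$ (locally $V(g_0)$ with $g_0=F(1,t_1,\ldots,t_n)+s_0\lambda h'$) along its intersection with the exceptional divisor, which follows either from the Cartier-divisor argument of Lemma~\ref{lemma} (the paper's route) or from a direct relative Jacobian computation for $g_0$; the base change $\OO_e$, the normalisation and the variable $z_{n+1}$ play no role here. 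Second, you cannot repeat Theorem~\ref{qmainprop}(1) ``verbatim over $\OO_{X,p}[\lambda]$'': that proof uses that $X_\sigma$ is smooth away from $p$ to handle the locus off the exceptional divisor, and for intermediate values $\lambda_0$ the fibre $V(F+\lambda_0 h)$ may well be singular away from $p$ --- which is exactly why the proposition only claims smoothness on some neighbourhood of $p\times\Aa^1$.

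The genuine gap is the final step, where you set $U$ to be ``the union of the images in $\mathcal{X}_\sigma$ of the smooth loci of these charts''. The blow-down map $q$ is proper but not open, so the image of the smooth locus $W$ of the map to $\Aa^1_\lambda$ is not obviously open, and nothing in your chart computations rules out non-smooth points of the fibres of $(f_\lambda)_\sigma$ accumulating on $p\times\Aa^1$ from outside the exceptional locus; this is precisely the uniformity-in-$\lambda$ issue you identify as the crux, and it is not settled by observing that the smooth locus upstairs contains $\mathcal{D}_{12}$. The paper closes it with a properness argument: the complement $F$ of $W$ in the strict transform is closed, hence $q(F)$ is closed in $\mathcal{X}_\sigma$, and it is disjoint from $p\times\Aa^1$ because the reduced preimage of $p\times\Aa^1$ in the strict transform is $\mathcal{D}_{12}\subset W$; then $U:=\mathcal{X}_\sigma\setminus q(F)$ is the required open neighbourhood, and $U\setminus(p\times\Aa^1)$ is identified with an open subset of $W\setminus\mathcal{D}_{12}$, hence is smooth over $\Aa^1$. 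Adding this step (or an equivalent tube-type argument) would complete your proof.
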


\begin{proof}
	{We start with the homogeneous case. Let $\rho: Bl_{p \times \Aa^1} \mathcal{X} \rightarrow  \mathcal{X}$ be the blow-up of $ \mathcal{X}$ along $p \times \Aa^1 $. }  Let $ q: \bigcup U_i = Bl_{p \times \Aa^1} \mathcal{X} \rightarrow X $ be the standard covering and denote by $\mathcal{D}_{12}$ the intersection of the strict transform of $\mathcal{X}_\sigma$ and the exceptional divisor $\mathcal{D}_1\subset Bl_{p \times \Aa^1} \mathcal{X}$. 
	
	We can describe the morphism $\mathcal{D}_{12}\to \Aa^1 $ similarly to our description of $D_{12}$ in Proposition~\ref{mainprop}, just adding the variable $\lambda$. The blow-up  $Bl_{p\times X}(X\times\Aa^1)$  is the same as $(Bl_{p}X)\times\Aa^1$ and is covered by the open subsets $U_i\times\Aa^1$, with the $U_i$ as in Proposition~\ref{mainprop}. Over $U_0\times\Aa^1$ we have
	\[
	f_\lambda=s_0^e(F(1,t_1,\ldots, t_n)+s_0\lambda h')
	\]
	and since the exceptional divisor is defined by $s_0$ on $U_0\times\Aa^1$, we see that $(\mathcal{D}_{12}\cap U_0)\times\Aa^1=(V(F)\cap U_0)\times\Aa^1\subset (Bl_{p}X)\times\Aa^1$. Thus $\mathcal{D}_{12}=V(F) \times\Aa^1$, and this scheme is smooth by our assumption on $F$.
	 
	In the quasi-homogenous case we go through the same construction as in the last section. First let
	\[
	 \OO_{X,p}[s^{1/a}]:=\OO_{X,p}[\sigma_0,\ldots,\sigma_n]/(\{\sigma_i^{a_i}-s_i\}_i) .
	 \]
and let $g^*(t)\in \OO_{X,p}[s^{1/a}]$ be the image of $f^*(t)=F(s_0,\ldots, s_n)+h$ under the inclusion $\OO_{X,p}\subset \OO_{X,p}[s^{1/a}]$. Letting $Z= \Spec \OO_{X,p}[s^{1/a}]$, we have the usual $\mu_a$-action on $Z$ with $X=Z/\mu_a$. The element $g^*(t)\in \OO_{X,p}[s^{1/a}]$ defines the morphism
\[
Z= \Spec \OO_{X,p}[s^{1/a}] \xrightarrow{g}\Spec k[t],
\]
making the the diagram
\[
\begin{tikzcd}
Z\arrow[r]\arrow[dr,"g"]&X\arrow[d, "f"]\\
&\Spec k[t]
\end{tikzcd}
\]
to commute. Moreover,  $g^*(t)=F(\sigma_0^{a_0},\ldots,\sigma_n^{a_n}) + h'$ with  $G(\sigma_0,\ldots,\sigma_n) = F(\sigma_0^{a_0},..,\sigma_n^{a_n})$ homogeneous of degree $e$, and with $h'\in m_q^{e+1}$. Define the morphism
\[
g_\lambda:\mathcal{Z}:=Z\times\Aa^1\to \Spec k[t,\lambda]
\]
by $g_\lambda^*(t)=G+\lambda\cdot h'$. Then $\mathcal{X}=\mathcal{Z}/\mu_a$ and we have a commutative diagram
\[
\begin{tikzcd}
\mathcal{Z}\arrow[r]\arrow[dr, "g_\lambda"]&\mathcal{X}\arrow[d, "f_\lambda"]\\
&\Spec k[t,\lambda]
\end{tikzcd} .
\]
Next,  blow up $\mathcal{Z} = Z \times \mathbb{A}^1$ at $p \times \Aa^1$ to get $\hat{\mathcal{Z}}$ and denote by $\hat{\mathcal{X}}$ the quotient by the action of $\mu_a$. Let $q: \hat{\mathcal{X}} \rightarrow \mathcal{X}$ {be} the natural map. Denote the intersection of the strict transform of $\mathcal{Z}_\sigma$ and the exceptional divisor in $\hat{\mathcal{Z}}$ by $\mathcal{E}_{12}= V_{\P^n} (F) \times \Aa^1$ (see the paragraph above) and its image under the $\mu_a$-quotient map by $\mathcal{D}_{12}$.  Then we get $\mathcal{D}_{12} = V_{\P(a)}(F) \times \Aa^1$  which is smooth by our Assumption \ref{qassumption}.   
	
{Let $\mathcal{X}_\sigma:=f_\lambda^{-1}(\sigma\times\Aa^1)\subset\mathcal{X}$. We have in both cases the proper map $q:\hat{\mathcal{X}} \rightarrow \mathcal{X}$, which is an isomorphism over
$\mathcal{X}\setminus p\times\Aa^1$. Let $q^{-1}[ \mathcal{X}_\sigma]$ be the closure of 
$q^{-1}(\mathcal{X}_\sigma\setminus  p\times\Aa^1)$ in $\hat{\mathcal{X}}$.} In both cases, the Cartier divisor $\mathcal{D}_{12}$ on the {reduced} scheme  $q^{-1}[\mathcal{X}_\sigma]$ is smooth over $\Aa^1$. Let $r: q^{-1}[\mathcal{X}_\sigma]\to \Aa^1$ be the morphism induced by $f_\lambda$. Then $r$ is flat and  the set $W$ of points $x\in q^{-1}[\mathcal{X}_\sigma]$ such that  $x$ is a smooth point of the fibre $r^{-1}(r(x))$ is an open subset of $q^{-1}[\mathcal{X}_\sigma]$, and is equal to the set of points of $q^{-1}[\mathcal{X}_\sigma]$ at which $r$ is a smooth morphism. By Lemma~\ref{lemma}, $W$ is an open neighbourhood of $\mathcal{D}_{12}$ in $q^{-1}[\mathcal{X}_\sigma]$. Letting $F$ be the closed complement of $W$ in $q^{-1}[\mathcal{X}_\sigma]$, and noting the $q$ is proper, $q(F)$ is a closed subset of $\mathcal{X}_\sigma$, disjoint from $p\times\Aa^1$. Set $U : = \mathcal{X}_\sigma \setminus q(F)$. Then $U$ is open and $U \setminus (p \times \Aa^1) \simeq W \setminus \mathcal{D}_{12}$ (via the strict transform identification) is smooth  over $\Aa^1$.
	  
\end{proof}

\begin{Prop} \label{Euler2}
	{Let $X$ be a smooth quasi-projective scheme over a field $k$, with} $Z \subset X$ closed, let $p: V \rightarrow X$ be a vector bundle, and let $s_1,s_2 : X \rightarrow V$ be two sections. Let $E$ be an $SL$-oriented motivic spectrum with respect to it Euler classes are defined. \\
		Consider $\tilde{p}: \pi^* V \rightarrow X \times \Aa^1 $ with $\pi$ the projection $\pi : X\times \Aa^1 \rightarrow X $.
	Define a section $s : X \times \Aa^1 \rightarrow \pi^*V $ by $s= \lambda s_1 + (1-\lambda) s_2 $ and assume that we have an open neighbourhood $U$ of $Z\times \Aa^1$ in $X\times \Aa^1$ such that  $Z(s) \cap U = Z \times \Aa^1 $. Then
	\[ e_{Z}(X,s_1) = e_{Z}(X,s_2).\] 
\end{Prop}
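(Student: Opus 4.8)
The plan is to use the homotopy invariance of the refined Euler class together with the hypothesis that the zero locus of the linear interpolation $s = \lambda s_1 + (1-\lambda)s_2$ stays concentrated on $Z\times\Aa^1$ near $Z$. Concretely, consider the $V$-twisted $E$-cohomology group with supports $E^{\pi^*V^*}_{Z\times\Aa^1}(X\times\Aa^1)$, and observe that the refined Euler class $e_{Z\times\Aa^1}(\pi^*V, s)$ lives there by Definition~\ref{LocalEulerClass}, precisely because $Z(s)\cap U = Z\times\Aa^1$ allows one to take the zero locus to be $Z\times\Aa^1$ after restricting to the Nisnevich (indeed Zariski) neighbourhood $U$; by Nisnevich descent for theories represented by motivic spectra, passing to $U$ does not change the relevant cohomology group.

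First I would set up the two closed immersions $i_0,i_1:X\hookrightarrow X\times\Aa^1$ at $\lambda=0$ and $\lambda=1$, and record that the pullbacks $i_0^*,i_1^*:E^{\pi^*V^*}_{Z\times\Aa^1}(X\times\Aa^1)\to E^{V^*}_Z(X)$ agree, because the projection $\pi:X\times\Aa^1\to X$ induces an isomorphism $\pi^*: E^{V^*}_Z(X)\xrightarrow{\sim} E^{\pi^*V^*}_{Z\times\Aa^1}(X\times\Aa^1)$ (this is the $\Aa^1$-invariance / homotopy property of the theory, combined with the support condition, and $i_0^*,i_1^*$ are both sections of $\pi^*$). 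Next I would check the naturality of the refined Euler class under pullback along $i_0$ and $i_1$: the Euler class is defined as the composite $X\times\Aa^1/(X\times\Aa^1\setminus Z\times\Aa^1)\xrightarrow{s}\pi^*V/\pi^*V\setminus 0\to\Sigma^{\pi^*V^*}E$, and applying $i_\epsilon^*$ to this composite yields exactly the composite defining $e_Z(X,s_\epsilon)$, since $i_0^*s = s_2$ and $i_1^*s = s_1$ (one checks the identification of Thom spaces $i_\epsilon^*(\pi^*V/\pi^*V\setminus0)\cong V/V\setminus 0$ is the evident one). Hence
\[
e_Z(X,s_2)=i_0^*\,e_{Z\times\Aa^1}(\pi^*V,s),\qquad e_Z(X,s_1)=i_1^*\,e_{Z\times\Aa^1}(\pi^*V,s),
\]
and since $i_0^*=i_1^*$ on that group, the two Euler classes coincide.

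The main obstacle, and the step deserving the most care, is verifying that the refined Euler class $e_{Z\times\Aa^1}(\pi^*V,s)$ is genuinely a well-defined class supported on $Z\times\Aa^1$ — i.e. that the hypothesis "$Z(s)\cap U = Z\times\Aa^1$ for some open neighbourhood $U$ of $Z\times\Aa^1$" really lets one build the class in $E^{\pi^*V^*}_{Z\times\Aa^1}(X\times\Aa^1)$ rather than only in a group with support on the (possibly larger) honest zero scheme $Z(s)$. This is handled by excision/Nisnevich descent: the class is constructed on $U$ using $Z(s)\cap U = Z\times\Aa^1$, and the restriction map $E^{\pi^*V^*}_{Z\times\Aa^1}(X\times\Aa^1)\to E^{\pi^*V^*}_{Z\times\Aa^1}(U)$ is an isomorphism because the support is contained in $U$. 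The remaining points — naturality of Thom/Euler classes under base change, compatibility of the support-cohomology with the functor $\Sigma^{V^*}=p_\#s_{0*}$, and the identification $i_\epsilon^*s=s_{2},s_1$ — are formal consequences of the six-functor formalism and the definitions in \cite[5.1]{BW}, so I would cite those rather than reprove them.
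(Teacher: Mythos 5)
Your proposal is correct and follows essentially the same route as the paper: excise to the neighbourhood $U$ where $Z(s)\cap U=Z\times\Aa^1$ to get a class in $E^{\pi^*V^*}_{Z\times\Aa^1}(X\times\Aa^1)$ (in the paper this is the composite $\alpha\circ s'^*\circ\tilde\pi^*$ applied to the Thom class, using $e_Z(V,s)=s^*th(V)$), then use $\Aa^1$-homotopy invariance to identify the restrictions at $\lambda=0$ and $\lambda=1$, which are $e_Z(X,s_2)$ and $e_Z(X,s_1)$ respectively. No gaps; the excision/support point you flag as the delicate step is exactly what the paper's isomorphism $\alpha$ handles.
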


\begin{proof}
	Let  $s_0 : X \rightarrow V$ be the zero section. 
	 We have the Thom class
	\[ th(V) = s_{0*} \one_X \in E^{V^*}_{0_{V}} ( V) .\]
	We have the two embeddings $i_1 : X \hookrightarrow X\times {0} \subset X\times \Aa^1$ and $i_2 : X \hookrightarrow X\times {1} \subset X\times \Aa^1$. By homotopy invariance the two maps \[i_{1}^* , i_{2}^*: E^{\pi^*V^\vee}_{Z \times \Aa^1} (X\times \Aa^1) \to E_Z^{V^\vee}(X) \] are equal. 
	Using the excision property in cohomology we can remove the piece $(X\times \Aa^1) \setminus U$ to get the equivalence \[ \alpha :	E^{V_U^*}_{Z \times \Aa^1} (U) \simeq E^{\pi^*V^*}_{Z \times \Aa^1} (X\times \Aa^1) . \]
Here $V_U$ is the pullback of $V$ over $U \hookrightarrow X \times \Aa^1$. \\
		Let $s' = s|_U : U \rightarrow V_U $ and $\tilde{p}'=\tilde{p}|_{V_U} : V_U \to U$. Since $Z(s')= Z(s) \cap U = Z \times \Aa^1$, we have a map \[ s'^* : E^{p'^*V_U^*}_{0} ( V_U) \to	E^{V_U^*}_{Z\times \Aa^1} (U) . \]

	Denote by $\tilde{\pi} $ the pullback map $V_U \rightarrow V$ of the vector bundle $V \to X$ along $U \hookrightarrow X\times \Aa^1 \to X$ and
	consider the following commutative diagram -
	\[
	\begin{tikzcd} [column sep = large]
		 E^{p'^*V_U}_{0} ( V_U) \arrow[d, "s'^*"]
		&E_{0_{V}}^{p^*V^*} ( V)  \arrow[dd, shift left, "s_2^*"] \arrow[l, "\tilde{\pi}^*"']
		\arrow[dd, shift right, "s_1^*"'] \\
			E^{V_U^*}_{Z\times \Aa^1} (U)  \arrow[d, "\simeq"', "\alpha"] 
		&  \\	
		E^{\pi^*V^*}_{Z \times \Aa^1} (X\times \Aa^1) 
			\arrow[r, shift left, "i_{1}^*"]
			\arrow[r, shift right, "i_{2}^*"'] 
			&	E_Z^{V^*}(X)
	\end{tikzcd} .
	\] 	
	We have
	\[ s_1^* th(V) = {i}_{1}^* \circ \alpha \circ s'^* \circ \tilde{\pi}^* th(V) = {i}_{2}^* \circ \alpha \circ s'^* \circ \tilde{\pi}^* th(V) = s_2^*th(V) \] 
	which gives the desired equality of local Euler classes.
\end{proof}
Let now $E =  H\mathcal{K}^{MW}$.
\begin{Cor} \label{Eulerclass}
	Let $f:X\to\Spec\OO$ be a flat quasi-projective morphism with $X$ smooth over $k_0$ and with an isolated critical point $p\in X_\sigma$.  Suppose that $f$ looks like $F=F(T_0,\ldots, T_n)$ at $p$ (see \ref{lookslike}).   Then 
 \[ 
 \mu^q_{f,p} = e_p(\Omega_{X/k(p)}, df)  =e_0(\Omega_{\Aa^{n+1}_{k(p )}/k(p )}, d(F(t_0,\ldots, t_n))= e_0(\Omega_{H^F/k(p )}, df_F) = \mu^q_{f_F,0}
 \]
 in $\GW(k(p ))$. In particular the quadratic Milnor number only depends on the principal part (i.e. $F$) at the expansion of $f$ at $p$.
\end{Cor}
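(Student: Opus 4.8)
The plan is to chain together two equalities of local Euler classes, using Proposition~\ref{Euler2} (homotopy invariance of the local Euler class along a linear path of sections) and Proposition~\ref{Euler1} (which guarantees the geometric hypothesis needed to apply Proposition~\ref{Euler2}). First I would make the same elementary reductions already set up in the paragraph preceding Proposition~\ref{Euler1}: by Nisnevich descent the class $e_p(\Omega_{X/k_0},df)$ only depends on $f$ restricted to $\Spec\OO_{X,p}$, so replace $X$ with $\Spec\OO_{X,p}$; then arrange that $\OO_{X,p}$ contains its residue field, so we may assume $k(p)=k$ and $k_0=k$; finally replace $\OO$ by $k[t]$ with $f^*(t)\in\OO_{X,p}$, and pick a regular system of parameters $s_0,\dots,s_n$ with $f^*(t)=F(s_0,\dots,s_n)+h$, $h\in m_p\cdot m_{p,s_*,a_*}^{(e)}$ (or $h\in m_p^{e+1}$ in the homogeneous case), as in Definition~\ref{qdef}. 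Using $(s_0,\dots,s_n)$ as \'etale coordinates identifies a neighbourhood of $p$ with a neighbourhood of $0$ in $\Aa^{n+1}_k$, and under this identification $\Omega_{X/k}$ is free with basis $ds_0,\dots,ds_n$, so $df=d(f^*(t))=dF(s_*)+dh$ while $d(F(t_0,\dots,t_n))$ corresponds to $dF(s_*)$. The middle equality $e_0(\Omega_{\Aa^{n+1}/k},dF)=e_0(\Omega_{H^F/k},df_F)$ is essentially the content of Theorem~\ref{mainprop}(3) read through the Scheja–Storch description of the local Euler class (Example~\ref{LocaEulerClassEx}, \ref{SSEulerClass}): both sides compute the Scheja–Storch form of the same Jacobian ring $J(F,0)=\OO_{X,p}/(\partial F/\partial s_0,\dots,\partial F/\partial s_n)$ — indeed $H^F=V(F-tT_{n+1}^e)$ is cut out so that in the chart $T_{n+1}\neq0$ its equation is $F(s_*)-t$, whose critical locus and Jacobian algebra at $0$ agree with those of $F$ itself. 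So the real work is the first equality.

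For the first equality $e_p(\Omega_{X/k},df)=e_0(\Omega_{\Aa^{n+1}/k},dF)$, apply Proposition~\ref{Euler2} with $X$ replaced by (a suitable affine model of) $\Spec\OO_{X,p}$, $V=\Omega_{X/k}$, $Z=\{p\}$, $s_1=d(F(s_*))$ and $s_2=d(f^*(t))=d(F(s_*)+h)$. The linear interpolation is $s=\lambda s_2+(1-\lambda)s_1 = d(F(s_*)+\lambda h)$, i.e. exactly the section $df_\lambda$ of the family $f_\lambda:X\times\Aa^1\to\Spec k[t,\lambda]$, $f_\lambda^*(t)=F(s_*)+\lambda h$, introduced in the run-up to Proposition~\ref{Euler1}. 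To invoke Proposition~\ref{Euler2} we must check its hypothesis: there is an open neighbourhood $U$ of $\{p\}\times\Aa^1$ in $X\times\Aa^1$ with $Z(s)\cap U=\{p\}\times\Aa^1$. The zero locus $Z(df_\lambda)$ is the critical locus of $f_\lambda$, i.e. $\mathcal{X}_{sing}\subset\mathcal{X}_\sigma=f_\lambda^{-1}(0\times\Aa^1)$ together with the requirement that $f_\lambda$ itself vanishes — but on $\{p\}\times\Aa^1$ we have $f_\lambda^*(t)\in m_p^{\ge2}$ so it lies in $\mathcal{X}_\sigma$, and Proposition~\ref{Euler1} furnishes precisely an open $U\supset\{p\}\times\Aa^1$ in $\mathcal{X}_\sigma$ on which $U\setminus(\{p\}\times\Aa^1)$ is smooth over $\Aa^1$, hence on which $df_\lambda$ is nowhere zero away from $\{p\}\times\Aa^1$; shrinking to an open of $\mathcal{X}$ with this intersection property finishes the verification. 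Proposition~\ref{Euler2} then gives $e_p(\Omega_{X/k},s_1)=e_p(\Omega_{X/k},s_2)$, i.e. $e_0(\Omega_{\Aa^{n+1}/k},dF)=e_p(\Omega_{X/k},df)$.

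Concatenating the two equalities yields the claimed identity in $\GW(k(p))$; in the general (non-split) case one simply keeps $k(p)$ as the coefficient field throughout, the reductions above being made after base-change along a splitting of $\OO_{X,p}/m_p\cdot m_{p,s_*,a_*}^{(e)}\to k(p)$ as in Definition~\ref{qdef}. The main obstacle I anticipate is the bookkeeping needed to legitimately run Proposition~\ref{Euler2}: that proposition is stated for $X$ smooth \emph{quasi-projective} over $k$, whereas after our reduction $X=\Spec\OO_{X,p}$ is only essentially of finite type, so one must either work with a smooth affine finite-type model and check that all the relevant classes and the open set $U$ descend from it, or note that the cohomology theory $H\mathcal{K}^{MW}$ is continuous/compatible with filtered limits of smooth affine schemes so that the essentially-finite-type statement follows by a limit argument — this is the only genuinely delicate point; the smoothness of $\mathcal{D}_{12}=V(F)\times\Aa^1$ (which drives Proposition~\ref{Euler1}, and is exactly where the hypothesis "$V(F)$ is a smooth (quotient) hypersurface" is used) and the Scheja–Storch identification of the middle term are by contrast routine given the earlier results.
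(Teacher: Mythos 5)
Your proposal follows essentially the same route as the paper: the same local reductions (Nisnevich invariance, passing to $\Spec\OO_{X,p}$, viewing $(X,p)$ via the \'etale coordinates $s_*$ as a Nisnevich neighbourhood of $(\Aa^{n+1}_{k(p)},0)$), then Proposition~\ref{Euler1} supplying the hypothesis of Proposition~\ref{Euler2} applied to the linear interpolation $d(F+\lambda h)$, giving $e_p(\Omega_{X/k(p)},df)=e_0(\Omega_{\Aa^{n+1}_{k(p)}/k(p)},dF)$. The only cosmetic difference is the remaining equality: the paper identifies $H^F\cap\{T_{n+1}\neq0\}$ with $\Aa^{n+1}_{k(p)}$ as the graph of $F$, under which $df_F$ becomes $dF$ directly, whereas you run the same chart computation through the Scheja--Storch description -- equivalent in substance, though your citation of Theorem~\ref{mainprop}(3) is not the relevant reference for that step.
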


\begin{proof} Proposition \ref{Euler1} proves that the assumptions in Proposition \ref{Euler2} are satisfied for $E= H\mathcal{K}^{MW} $, $Z=\{p\}$, $V= \Omega_{X/k} \to X$, $s_1 = df$, and $s_2 = dF$. This gives the following identity in $\GW(k(p))$,
\[
e_p(\Omega_{X/k(p)}, df)=e_p(\Omega_{X/k(p)}, d(F(s_0,\ldots, s_n))).
\]
The parameters $s_0,\ldots, s_n\in \OO_{X,p}$  define an \'etale map $\alpha :\Spec \OO_{X,p} \rightarrow \Aa^{n+1}_{k(p )}:=\Spec k(p )[t_0,\ldots, t_n]$ which maps $p$ to $0$ and {with} $\alpha^*F(t_0,\ldots,t_n) = F(s_0,\ldots,s_n)$. {Thus $(s_0,\ldots,s_n)$ expresses $(X,p)$ as a Nisnevich neighbourhood of $(\Aa^{n+1}_{k(p )},0)$. Since  \[(s_0,\ldots,s_n)^*(F(t_0,\ldots, t_n))=F(s_0,\ldots, s_n),\] we have 
\[
e_p(\Omega_{X/k(p)}, d(F(s_0,\ldots, s_n)))=\overline{(s_0,\ldots, s_n)}^*(e_0(\Omega_{\Aa^{n+1}_{k(p )}/k(p )}, d(F(t_0,\ldots, t_n))
\]
where $\overline{(s_0,\ldots, s_n)}^*:\GW(k(p))\to \GW(k(p))$ is the isomorphism induced by 
$(s_0,\ldots,s_n): p\to 0$, is just the identity map, so we can write the above equation as
\[
e_p(\Omega_{X/k(p)}, d(F(s_0,\ldots, s_n)))= e_0(\Omega_{\Aa^{n+1}_{k(p )}/k(p )}, d(F(t_0,\ldots, t_n)).
\]
The singular point $0=(0:\ldots:0:1)$ of $H^F_{\sigma_{k(p )}}$ is in the affine open subscheme $U_{n+1}\subset \P_{\OO_{k(- )}}(a_*,1)$, so to compute $ e_0(\Omega_{H^F/k(p )}, df_F)$, we can restrict to $U_{n+1}$. We have
\[
U_{n+1}=\Spec \OO_{k(p )}[T_0,\ldots, T_n, T_{n+1}][T_{n+1}^{-1}]_0
\]
and $\OO_{k(p )}[T_0,\ldots, T_n, T_{n+1}][T_{n+1}^{-1}]_0$ is the polynomial ring
$\OO_{k(p )}[t_0,\ldots, t_n]$, with $t_i=T_i/T_{n+1}^{a_i}$. On $U_{n+1}$, $H^F$ has defining equation 
\[
(F(T_0,\ldots, T_n)-tT_{n+1})/T_{n+1}^e=F(t_0,\ldots, t_n)-t.
\]
Thus, $H^F\cap U_{n+1}$ is just the graph of the morphism 
\[
F(t_0,\ldots, t_n):\Aa^{n+1}_{k(p )}=\Spec k(p )[t_0,\ldots, t_n]\to \Spec k[t]_{(t)}.
\]
If we replace the graph $H^F\cap U_{n+1}$ with  the isomorphic scheme $\Spec k(p )[t_0,\ldots, t_n]$ via the isomorphism given by the first projection, then $f_F$ transforms to the map  $F(t_0,\ldots, t_n)$ and $0$ goes to the origin $(0,\ldots, 0)\in \Aa^{n+1}_{k(p )}$. In other words,
\[
 e_0(\Omega_{H^F/k(p )}, df_F)=e_0(\Omega_{\Aa^{n+1}_{k(p )}/k(p )}, d(F(t_0,\ldots, t_n)).
 \]}
\end{proof}

\subsection{The local Euler class and the Jacobian ring} 

 We recall here an algebraic construction of a distinguished quadratic form related to the  Scheja-Storch element, which gives the local Euler class $e_p(\Omega_{X/k}, s)$ of Definition \ref{LocalEulerClass}. This gives an explicit algebraic interpretation to the $\Aa^1$-Milnor number.

\begin{Def} Let $k$ be a field and $X$ be a smooth finite type scheme over $k$. Let $p\in X$ be a closed point, take $f\in \OO_{X,p}$, and let $s_0,\ldots, s_n\in m_p$ be a  regular system of parameters at $p$. Suppose that  $\sqrt{(\partial f/\partial s_0\ldots \partial f / \partial s_n)}=m_p$, so $df$ has an  isolated zero at $p$; note that the ideal $(\partial f/\partial s_0\ldots \partial f / \partial s_n)$ does not depend on the choice of the $s_i$. Let $k(p)$ be the residue field of $\OO_{X,p}$. \\
	The \textit{Jacobian ring of $f$ at $p$}, $J(f,p)$, is defined as
	\[
	J(f,p):=\OO_{X,p}/(\partial f/\partial s_0\ldots \partial f / \partial s_n) .
	\] 
	For $k$ algebraically closed, the dimension of $J(f,p)$ over $k$ is the {\em Milnor number} $\mu_{f,p}$. \\
	Since $\partial f/\partial s_i $ is in $m_p=(s_0,\ldots, s_n)$,  we can write for each $i$,
	\[
	\partial f/\partial s_i = \sum_j a_{ij} s_j
	\]
	with $a_{ij} \in \OO_{X,p}$.  The {\em Scheja-Storch element} $e_{f,p} \in J(f,p)$ is defined as the  image of the determinant $det(a_{ij})$ in $J(f,p)$; $e_{f,p}$ is independent of the choices made.         
	Since $J(f,p)$ is an Artinian local $k$-algebra, $J(f,p)$ contains the residue field $k(p )$. \\
	Let $Tr : J(f,p) \to k(p)$ be a $k(p )$-linear map sending $e_{f, p}$ to $1$. Define \[B_{f,p} : J(f,p) \times_{k(p)} J(f,p) \to k(p)\] by $B_{f,p} (x,y) = Tr(xy)$. The class $[B_{f,p}] \in \GW(k(p))$ does not depend on the choices of generators $(s_0,...,s_n)$ or the map $Tr$,  see \cite[Theorem 3.1]{Le20a}. In addition this class computes the quadratic Milnor number of $f$ at $p$ (\cite[Proposition 2.32 and Theorem 7.6]{BW} and \cite[Corollary 3.3]{Le20a}), 
		\[
	\mu_{f,p}^q = e_p(\Omega_{X/k}, df) = [B_{f,p}] . \label{SSEulerClass}
	\]
\end{Def}
	
	By taking the rank of the corresponding quadratic form, $\rk \mu^q_{f,p} = \dim J(f,p) = \mu_{f,p}$, so the class $\mu_{f,p}^q \in \GW(k(p))$ can be viewed as a quadratic refinement of the Euler number $\mu_{f,p} \in \mathbb{Z}$.

It follows from this discussion and Corollary~\ref{Eulerclass} that for a semi-quasi-homogeneous singularity $p$, $\mu_{f,p}^q$ can be defined purely algebraically in terms of the polynomial $F \in k(p)[T_0, \ldots T_n]$, by the Scheja-Storch form. For a beautiful survey on the quadratic Milnor number with some computed examples see \cite{Orm}.

\section{The generalized conductor formula}

In this section we use the results of the previous sections computing  $\chi(\Psi_f\one|_p) $ at a singular point $p$ and reinterpret them in terms of the difference $\Delta_t(F/k)$  considered in  \cite{Le20b}. Using the functoriality of $\Psi_f$, this allows to generalize the formula proven in \cite{Le20b}  to the case of $f:X\to\Spec\OO$ with  finitely many isolated critical points, all satisfying Assumptions \ref{assumption} or \ref{qassumption}, which is our main result in this paper. In particular, this verifies the conjecture formulated in \cite[Conjecture 5.7]{Le20b} in characteristic zero, for a somewhat wider class of singularities than what was considered there. We explain this with more detail 
below.

{We retain in this section our notations and assumptions for $\OO$ and $B=\Spec \OO$ as in Section~\ref{SectionNearbyCycles}, and assume in addition that the subfield $k_0\subset \OO$ has characteristic zero. We have the characteristic zero residue field $k$ and fraction field $K$ of $\OO$.} Let $f: X \rightarrow B$  be a flat, quasi-projective morphism such that $X$ is smooth over $k_0$, $X_\eta$ is smooth over $\eta$ and such that $X_\sigma$ has finitely many singular points.  

\sloppy Fix a sequence of pairwise relative prime weights $a:=(a_0,\ldots, a_n)$ and a field $\kappa$, and  let $F\in \kappa[T_0,\ldots, T_n]$ be a degree $e$ $a$-weighted homogeneous polynomial such that $V(F)\subset \P_\kappa(a)$ is a smooth quotient hypersurface, in the sense of Definition~\ref{smhyper}. We have the discrete valuation ring $\OO_\kappa:=\kappa[t]_{(t)}$,  the hypersurface $H^F:=V(F-tT_{n+1}^e)\subset \P_{\OO_\kappa}(a,1)$ with projection $f_F:H^F\to \Spec\OO_\kappa$.  $H^F$ is smooth over $\kappa$,  $H^F_{\eta_k}$ is smooth over $\eta_\kappa$,  and $H^F_{\sigma_\kappa}$ has a single singularity at $p:=(0:\ldots:0:1)$. In fact, $H^F_{\sigma_\kappa}$ is the cone over $V(F,T_{n+1})\subset V(T_{n+1})=\P(a)_\kappa$ with vertex $p$. In \cite{Le20b}, Levine, Pepin Lehalleur and Srinivas consider the invariant
\[
\Delta_t(F/k):=\Sp_t(\chic(H^F_{\eta_\kappa}/\kappa(t)))-\chic(H^F_{\sigma_\kappa}/\kappa)\in \GW(\kappa)
\]
and derive an expression, named  {\em a conductor formula},  for $\Delta_t(F/k)$ in terms of the local Euler class $e_p(\Omega_{H^F/\kappa}, dt)\in \GW(\kappa)$. Note that $f_F:H^F\to \Spec\OO_\kappa$ looks at $p=(0:\ldots:0:1)$ like the weighted homogeneous singularity defined by $F$. A generalization of the conductor formulas for $\Delta_t(F/k)$ for degenerations with finitely many singularities of a certain type is conjectured in {\it loc. cit.} \cite[Conjecture 5.7]{Le20b}.

For an $a_*$-weighted homogeneous $F$ the conductor formula of Levine, Pepin Lehalleur and Srinivas has the form (\cite[Theorem 5.6]{Le20b}) - 
\[ \Delta_t(F/\kappa) =  \<\prod_j a_j \cdot e\> - \<1\> + (-\<e\>)^{n} \cdot e_{0}( \Omega _{H^F/\kappa} , dt )  \in \GW(\kappa) . \] 
Here $ e_{0}( \Omega _{H^F/k(p)} , dt ) $ is the local Euler class at $0:=(0:\ldots:0:1)$ \cite[5]{Le20b}, also see Definition~\ref{LocalEulerClass}. We wish to extend this to a formula  {in the case of a morphism with isolated critical points} that look like homogeneous or quasi-homogeneous singularities. In order to do that we give a comparison between $\chic(\Psi_f\one|_p)$ of the scheme and the motivic Euler characteristic of the hypersurface $H^F$ defined by the polynomial $F$. Recall from Section~\ref{SectionMotivicEulerChar} that for a finite separable field extension $k_1\subset k_2$, we have the transfer map $\Tr_{k_2/k_1}:\GW(k_2)\to \GW(k_1)$. 

\begin{Th} \label{mainthm}
	Let $\OO$ and $B:=\Spec\OO$ be as in Section~\ref{SectionNearbyCycles}, with the field $k_0\subset \OO$ being of characteristic zero.
		Let $f: X \rightarrow B$  be a flat quasi-projective morphism with $X$ smooth over $k_0$ and with $X_\eta$ smooth over $\eta$,  and let $p\in X_\sigma$ be an isolated critical point of $f$, satisfying assumption \ref{assumption} or \ref{qassumption}. Let $F\in k(p )[T_0,\ldots, T_n]$ be the corresponding (weighted) homogeneous polynomial.  Then
	\[ \chic(\Psi_f({\one_X}_{\eta})|_p) =  \Tr_{k(p )/k}(\Delta_t(F/k(p )) +  \<1\>)\in \GW(k).  \]	
\end{Th}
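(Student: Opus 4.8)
The plan is to reduce the statement to the single-singularity case and then identify $\chic(\Psi_f({\one_X}_{\eta})|_p)$ with the corresponding nearby-cycles contribution for the model degeneration $H^F$. First I would observe that $\chic(\Psi_f({\one_X}_{\eta})|_p)$ depends only on the restriction of $f$ to a Nisnevich (indeed Zariski) neighbourhood of $p$: this follows from Property~\ref{property} applied to the open immersion $U\hookrightarrow X$ for $U$ a suitable neighbourhood of $p$, which gives $\Psi_f({\one_X}_{\eta})|_U\simeq \Psi_{f|_U}({\one_U}_{\eta})$, hence the same after restriction to $p$. So I may assume $X_\sigma$ has the single singular point $p$ and that Construction~\ref{muconstruction} can be carried out globally, as in Theorem~\ref{qmainprop}. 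In the homogeneous case one uses Theorem~\ref{mainprop} instead; since the weighted case includes the homogeneous one with $a_*=(1,\ldots,1)$, I would treat everything uniformly through Theorem~\ref{qmainprop}.

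\textbf{Computing the local term.} With $Y\to\Spec\OO_e$ the semi-stable reduction of Theorem~\ref{qmainprop}, part (3) of that theorem, combined with Proposition~\ref{local} (or rather its proof, reading off only the $p$-local summand), gives
\[
\chic(\Psi_f({\one_X}_{\eta})|_p)=\chic(\widetilde{D_1^\circ})+\chic(\widetilde{D_{12}})-\<-1\>\cdot\chic(\widetilde{D_{12}}),
\]
where $\widetilde{D_1}\simeq V(F-T_{n+1}^e)\subset\P_{k(p)}(a_*,1)$ and $\widetilde{D_{12}}\simeq V(F)\subset\P_{k(p)}(a_*)$, with $\widetilde{D_1^\circ}=\widetilde{D_1}\setminus\widetilde{D_{12}}$. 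Using cut-and-paste (Proposition~\ref{cutandpaste}) as in the proof of Corollary~\ref{CorWtHomogFormula}, this rearranges to
\[
\chic(\Psi_f({\one_X}_{\eta})|_p)=\chic(V(F-T_{n+1}^e))-\<-1\>\cdot\chic(V(F))+ \big(\text{correction}\big),
\]
and I would track the exact bookkeeping so the right side becomes $\chic(H^F_{\sigma_\kappa}\setminus\text{stuff})$ plus a term that assembles into $\Delta_t(F/k(p))+\<1\>$ after using the base-change/transfer formula Proposition~\ref{Transfer} to pass from $k(p)$ to $k$ (note $k(p)/k$ is finite separable since $k$ has characteristic zero). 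Here I must also invoke Proposition~\ref{simon}/Proposition~\ref{PsiId}, which identify $\Sp_t$ with $\Psi_{id}$, in order to recognize $\Sp_t(\chic(H^F_{\eta_\kappa}))$ inside the nearby-cycles contribution for $H^F$.

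\textbf{Matching with $\Delta_t(F)$.} The cleanest route is: apply the single-singularity case of Corollary~\ref{CorWtHomogFormula} \emph{to $H^F$ itself}, i.e.\ to $f_F:H^F\to\Spec\OO_\kappa$, whose special fibre $H^F_{\sigma_\kappa}$ is the cone over $V(F,T_{n+1})$ with a single singular point $p_F=(0:\dots:0:1)$ that looks like the weighted-homogeneous singularity defined by $F$. That corollary gives
\[
\chic(\Psi_{f_F}({\one_{H^F}}_{\eta_\kappa}))=\chic((H^F_{\sigma_\kappa})^\circ)+\chic(V(F-T_{n+1}^e))-\<-1\>\cdot\chic(V(F)),
\]
while on the other hand, since $\Psi_{f_F}$ restricted to the smooth locus $(H^F_{\sigma_\kappa})^\circ$ is the unit and since $H^F$ is proper over $\OO_\kappa$, Proposition~\ref{local} together with Proposition~\ref{simon} gives
\[
\Sp_t(\chic(H^F_{\eta_\kappa}))=\chic(\Psi_{f_F}({\one_{H^F}}_{\eta_\kappa}))=\chic((H^F_{\sigma_\kappa})^\circ)+\chic(\Psi_{f_F}({\one_{H^F}}_{\eta_\kappa})|_{p_F}).
\]
Subtracting, $\Delta_t(F/k(p))=\chic(\Psi_{f_F}({\one_{H^F}}_{\eta_\kappa})|_{p_F})-\chic(\{p_F\})=\chic(\Psi_{f_F}({\one_{H^F}}_{\eta_\kappa})|_{p_F})-\<1\>$. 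But the $p_F$-local term for $H^F$ equals the $p$-local term for $X$ because both are computed from the \emph{same} strata $V(F-T_{n+1}^e)$, $V(F)$ via Theorem~\ref{qmainprop}(3) — the local nearby-cycles contribution only sees the formal/étale-local structure of the degeneration, which is the same for $f$ at $p$ and for $f_F$ at $p_F$. Hence $\chic(\Psi_f({\one_X}_{\eta})|_p)=\Delta_t(F/k(p))+\<1\>$ when $k(p)=k$, and the general case follows by Proposition~\ref{Transfer}.

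\textbf{Main obstacle.} The delicate point is making precise the claim that "the local nearby-cycles contribution at $p$ for $f$ equals that for $f_F$ at $p_F$." Theorem~\ref{qmainprop} is stated for the given $f$, and one needs to know its conclusion (3) applies verbatim to $f_F$ — which it does, since $f_F$ satisfies Assumption~\ref{qassumption} with the same $F$ — and then that the two local terms genuinely coincide as elements of $\GW(k(p))$, not merely have the same shape. This requires either quoting Theorem~\ref{qmainprop}(3) for both morphisms and comparing the explicit right-hand sides (both literally $\chic(V(F-T_{n+1}^e))-\<-1\>\chic(V(F))$ after the cut-and-paste rearrangement, hence equal), or arguing via étale-locality of $\Psi$. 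I expect the bulk of the work to be in carefully justifying the reduction to a neighbourhood of $p$ and the invocation of Proposition~\ref{local} at the level of individual local summands, together with keeping the $\<1\>$ and transfer bookkeeping straight.
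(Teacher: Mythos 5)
Your proposal is correct and follows essentially the same route as the paper: both compute the local term at $p$ from the semi-stable reduction strata of Theorem~\ref{qmainprop} via Proposition~\ref{local} (using that $\widetilde{D_2}\to X_\sigma$ is an isomorphism away from $p$), and both obtain $\Delta_t(F/k(p))$ by applying Corollary~\ref{CorWtHomogFormula} to $f_F:H^F\to\Spec\OO_{k(p)}$ itself, together with the identification $\Sp_t=\Psi_{id*}$ (Propositions~\ref{PsiId}, \ref{simon}, Property~\ref{property}(b)) and the transfer of Proposition~\ref{Transfer}. The only cosmetic difference is that you cancel $\chic((H^F_{\sigma})^\circ)$ directly to get $\Delta_t(F/k(p))=\chic(\Psi_{f_F}(\one)|_{p_F})-\<1\>$ and then compare the two identical local strata expressions, whereas the paper evaluates $\Sp_t\chic(H^F_\eta)$ and $\chic(H^F_\sigma)$ separately, using the $\Aa^1$-bundle structure of $H^{F\circ}_\sigma$ over $V(F)$.
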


\begin{proof} The homogeneous case is a special case of the weighted homogeneous case, with all weights equal to 1, so we need only handle the weighted homogeneous case. Since $\chic(\Psi_f(\one_{X_{\eta}})|_p)$ is determined by a neighbourhood of $p$ we can assume $p$ is the only critical point of $f$.
	
	Note that we have families $f:X\to\Spec\OO$ and $f_F: H_F\to \Spec k(p )[t]_{(t)}$ over different bases, so we need to keep track of the base fields for the Euler characteristics and the base schemes for the nearby cycles functors. First we show that the terms in the difference $\Delta_t(F/k(p ))$ are closely related to the Denef-Loeser covers we computed in   Theorem \ref{mainprop} and Theorem \ref{qmainprop}.
		{By Property~\ref{property}, Proposition~\ref{PsiId} and Proposition~\ref{simon}, we have
	 \begin{align*}
\Sp_t(\chic(H^F_{\eta_{k(p )}}/k(p )(t)))&=\Psi_{\id_{k(p )[t]_{(t)}*}}(\chic(H^F_{\eta_{k(p )}}/k(p )(t)))\\
&=\Psi_{\id_{k(p )[t]_{(t)}*}}(\chi(f_{F{\eta_{k(p )}}*}(\one_{H^F_{\eta}})))\\
&=\chi(\Psi_{\id_{k(p )[t]_{(t)}}}(f_{F{\eta_{k(p )}}*}(\one_{H^F_{{\eta_{k(p )}}}})))\\
&=\chi(f_{F\sigma_{k(p )}*}(\Psi_{f_F}(\one_{H^F_{{\eta_{k(p )}}}})))\\
&=\chic(\Psi_{f_F}(\one_{H^F_{{\eta_{k(p )}}}})/k(p )).
\end{align*}}
 {On the other hand, we can apply   Corollary~\ref{CorWtHomogFormula}   to give 
\[
\chic(\Psi_{f_F}(\one_{H^F_{\eta_{k(p )}}}))/k(p ))=\chic(V(F-T_{n+1}^e)/k(p ))+\chic(H^{F\circ}_{\sigma_{k(p )}}/k(p ))-
\chic(\Aa^1\times V(F)/k(p )).
\]
However, $H^{F\circ}_{\sigma_{k(p )}}$ is an $\Aa^1$-bundle over $V_{\P_{k(p )}(a)}(F)\cong V_{\P_{k(p )}(a,1)}(F,T_{n+1})\subset\P_{k(p )}(a,1)$, so we have 
\[
\chic(H^{F\circ}_{\sigma_{k(p )}/k(p )})=\chic(V(F)/k(p ))\cdot \chic(\Aa^1/k(p ))=\chic(\Aa^1\times V(F)/k(p )),
\]
which yields
\[
\chic(\Psi_{f_F}(\one_{H^F_{\eta_{k(p )}}})/k(p ))=\chic(V(F-T_{n+1}^e)/k(p )).
\]
Thus}
\[  \Sp_t \chic(H^F_{\eta_{k(p )}}/k(p )(t)) =\chic (V(F-T_{n+1}^e)/k(p )) = \chic(\widetilde{D_1}/k(p )). \]
 {Now $H_{\sigma_{k(p )}}^F= H^{F\circ}_{\sigma_{k(p )}}\amalg (0:\ldots:0:1)_{k(p )}$, and} $V_{\mathbb{P}_{k(p )} (a,1)}(F, T_{n+1}) \simeq V_{\mathbb{P}(a)_{k(p )}}(F) \simeq \widetilde{D_{12}} $, so
 \[  \chic (H^F_{\sigma_{k(p )}}/k(p )) = \chic(\widetilde{D_{12}}/k(p )) \cdot \<-1\> + \<1\>\in \GW(k(p )). \] 
 Adding this up (or rather subtracting) we have the formula
  \[ \Delta_t(F/k(p )) = \chic(\widetilde{D_1}/k(p )) - \chic(\widetilde{D_{12}}/k(p )) \cdot \<-1\> - \<1\>\in \GW(k(p )). \]
Applying Proposition~\ref{Transfer}, this gives
 \[
 \Tr_{k(p )/k}(\Delta_t(F/k(p )))=\chic(\widetilde{D_1}/k) - \chic(\widetilde{D_{12}}/k) \cdot \<-1\> - \Tr_{k(p )/k}(\<1\>)\in \GW(k) .
 \]
  On the other hand, by Corollary \ref{CorLocalHomogFormula} and Theorem (\ref{qmainprop}), we have
\[ \chic(\Psi_f({\one_X}_{\eta})|_p/k) = 
 \chi(\widetilde{D_1}/k)  - \chi(\widetilde{D_{12}}/k) \cdot \<-1\>  ,
 \]
so comparing both terms above, we have 
\[  
\chic(\Psi_f({\one_X}_{\eta})|_p/k) =\Tr_{k(p )/k}(\Delta_t(F/k(p )) + \< 1 \>),
\]
concluding the proof.
\end{proof}

Combining the result of the theorem above with \cite[Theorem 5.6]{Le20b} and the result of Section 6, we obtain the local formula for $\chic(\Psi_f \one)$ at a singularity $p$.

\begin{Cor}
	In the setting of Theorem \ref{mainthm}, let $(a_0,\ldots, a_n)$ be the weights and $e$ the weighted degree for $F$ (where all weights are $1$ when $F$ is homogeneous), then
	\[ \chic(\Psi_f({\one_X}_{\eta})|_p) =  \Tr_{k(p )/k}(\<\prod_j a_j \cdot e\> + (-\<e\>)^{n} \cdot e_{0}( \Omega _{\Aa^{n+1}/k(p )} , dF) )\in \GW(k). 
	\]
\end{Cor}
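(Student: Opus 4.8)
The plan is to obtain this corollary by feeding two already-established inputs into Theorem~\ref{mainthm}: the weighted conductor formula of Levine, Pepin Lehalleur and Srinivas, and the comparison of local Euler classes proved in Corollary~\ref{Eulerclass}. Theorem~\ref{mainthm} gives
\[
\chic(\Psi_f({\one_X}_{\eta})|_p) = \Tr_{k(p )/k}\bigl(\Delta_t(F/k(p )) + \<1\>\bigr),
\]
so the task reduces to rewriting $\Delta_t(F/k(p )) + \<1\>$ in terms of the Euler class of $dF$.

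First I would apply the conductor formula \cite[Theorem 5.3]{Le20b} over the base field $\kappa = k(p )$. Its hypotheses --- that the $a_i$ be pairwise relatively prime, that each $a_i$ divide $e$, that $e$ be prime to the exponential characteristic, and that $V(F)\subset\P_{k(p )}(a_*)$ be a smooth quotient hypersurface --- are precisely the conditions collected in Assumption~\ref{qassumption} at the point $p$, and are trivially satisfied, with $a_* = (1,\ldots,1)$, in the homogeneous case of Assumption~\ref{assumption}. This yields
\[
\Delta_t(F/k(p )) = \<\textstyle\prod_j a_j \cdot e\> - \<1\> + (-\<e\>)^n \cdot e_0(\Omega_{H^F/k(p )}, dt),
\]
and hence, adding $\<1\>$ to cancel the $-\<1\>$ term,
\[
\Delta_t(F/k(p )) + \<1\> = \<\textstyle\prod_j a_j \cdot e\> + (-\<e\>)^n \cdot e_0(\Omega_{H^F/k(p )}, dt).
\]
Here $dt$ on $H^F$ means the section $d(f_F^*(t))$ of $\Omega_{H^F/k(p )}$, which is exactly the section $df_F$.

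Next I would invoke Corollary~\ref{Eulerclass}, applied to the family $f_F : H^F \to \Spec\OO_{k(p )}$, which looks at its unique singular point $0 = (0:\cdots:0:1)$ like the polynomial $F$; this identifies $e_0(\Omega_{H^F/k(p )}, df_F)$ with $e_0(\Omega_{\Aa^{n+1}_{k(p )}/k(p )}, dF)$ in $\GW(k(p ))$. Substituting this and applying the (additive) transfer $\Tr_{k(p )/k}:\GW(k(p ))\to\GW(k)$ to both sides of the displayed identity gives exactly
\[
\chic(\Psi_f({\one_X}_{\eta})|_p) = \Tr_{k(p )/k}\bigl(\<\textstyle\prod_j a_j \cdot e\> + (-\<e\>)^n \cdot e_0(\Omega_{\Aa^{n+1}_{k(p )}/k(p )}, dF)\bigr),
\]
as claimed.

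I expect no serious obstacle: the corollary is a formal assembly of Theorem~\ref{mainthm}, \cite[Theorem 5.3]{Le20b}, and Corollary~\ref{Eulerclass}. The only points requiring care are bookkeeping --- making sure the discrete valuation ring and uniformizer used to form $\Delta_t(F/k(p ))$ are $k(p )[t]_{(t)}$ with the same $t$, that all Euler characteristics and Euler classes are taken over the correct base field before the transfer is applied, and that the Euler class appearing in \cite[Theorem 5.3]{Le20b} is indeed the one with respect to $dt = df_F$ so that Corollary~\ref{Eulerclass} matches it verbatim. The genuine content of the statement was already absorbed into the proofs of Theorem~\ref{mainthm} (the geometric comparison via semi-stable reduction) and Corollary~\ref{Eulerclass} (the homotopy-invariance argument for the local Euler class), so here one only reaps the consequences.
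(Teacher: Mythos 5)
Your proof is correct and is essentially the paper's own argument: Theorem~\ref{mainthm} combined with the conductor formula of \cite[Theorem 5.3]{Le20b} and the Euler class identification $e_{0}(\Omega_{\Aa^{n+1}/k(p )},dF)=e_{0}(\Omega_{H^F/k(p )},df_F)$ from Corollary~\ref{Eulerclass}, with the $\<1\>$ cancelling the $-\<1\>$ before applying $\Tr_{k(p )/k}$. The bookkeeping points you flag (same dvr $k(p )[t]_{(t)}$ and uniformizer $t$, Euler class taken with respect to $dt=df_F$) are exactly the checks implicit in the paper's one-line proof.
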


\begin{proof}
	The formula follows from that of Theorem \ref{mainthm}, together with the formula of \cite[Theorem 5.6]{Le20b} mentioned above, and the identity
	\[
	e_{0}(\Omega _{\Aa^{n+1}/k(p )} , dF)=e_{0}( \Omega _{H^F/\kappa} , dt ),
	\]
	of Corollary~\ref{Eulerclass}.
\end{proof}

We now proceed to obtain a global formula in the general case, when $X$ has multiple singular points at the special fibre $p_1,\ldots , p_r$ satisfying Assumption \ref{assumption} (or \ref{qassumption}). We state our main result in the weighted homogeneous setting as this also includes the  homogeneous case.

\begin{Cor}[Generalized quadratic conductor formula] \label{conductor}
	Let $X \rightarrow \Spec \OO$ be a flat {projective} morphism of relative dimension $n$, with $X$ smooth over $k_0$ of characteristic $0$,  and $X_\eta $ smooth over $\eta$. Suppose that the special fibre $X_\sigma$ has isolated singularities ${p_1,..,p_r}$ satisfying Assumption \ref{qassumption} with $F_i\in k(p_i)[T_0,\ldots,T_n]$ an $a_*^{(i)}$-weighted homogeneous polynomial of degree $e_i$. Then
\[
	\Sp_t(\chic(X_\eta/k(\eta)))-\chic(X_\sigma/k) = \sum_i \Tr_{k(p_i)/k} \left( \< \prod_j a_j^{(i)} \cdot e_i\> - \<1\> + (-\<e_i\>)^{n} \cdot \mu^q_{f,p_i} \right) .
\]
	
\end{Cor}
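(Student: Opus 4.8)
The plan is to assemble the global formula from three ingredients that have already been established: the local computation of $\chic(\Psi_f(\one_{X_\eta})|_{p_i})$ from Theorem~\ref{mainthm}, the conductor formula of \cite[Theorem 5.3]{Le20b} for $\Delta_t(F_i/k(p_i))$, and the comparison of local Euler classes from Corollary~\ref{Eulerclass}. First I would reduce the global difference $\Sp_t(\chic(X_\eta))-\chic(X_\sigma)$ to a sum of local contributions. Since $f$ is proper, one relates $\Sp_t(\chic(X_\eta/K))$ to $\chic(\Psi_f(\one_{X_\eta})/k)$: by Proposition~\ref{simon} and Proposition~\ref{PsiId}, $\Sp_t$ applied to $\chic(X_\eta)=\chi(f_{\eta*}\one_{X_\eta})$ equals $\chi(\Psi_{\id}(f_{\eta*}\one_{X_\eta}))$, and by Property~\ref{property}(b) (using properness of $f$) this equals $\chi(f_{\sigma*}\Psi_f(\one_{X_\eta}))=\chic(\Psi_f(\one_{X_\eta})/k)$. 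Then Proposition~\ref{local} splits this as $\sum_i \chic(\Psi_f(\one_{X_\eta})|_{p_i}) + \chic(X_\sigma\setminus\{p_1,\ldots,p_r\})$.

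Next I would subtract $\chic(X_\sigma)$. Using the cut-and-paste property (Proposition~\ref{cutandpaste}) for $X_\sigma = (X_\sigma\setminus P)\amalg P$, we get $\chic(X_\sigma) = \chic(X_\sigma\setminus P) + \sum_i \chic(\Spec k(p_i))= \chic(X_\sigma\setminus P) + \sum_i \Tr_{k(p_i)/k}(\<1\>)$. Combining with the previous paragraph, the $\chic(X_\sigma\setminus P)$ terms cancel and we obtain
\[
\Sp_t(\chic(X_\eta/K))-\chic(X_\sigma/k) = \sum_i \left(\chic(\Psi_f(\one_{X_\eta})|_{p_i}) - \Tr_{k(p_i)/k}(\<1\>)\right).
\]
Now I apply Theorem~\ref{mainthm}, which gives $\chic(\Psi_f(\one_{X_\eta})|_{p_i}) = \Tr_{k(p_i)/k}(\Delta_t(F_i/k(p_i)) + \<1\>)$, so the term in parentheses is exactly $\Tr_{k(p_i)/k}(\Delta_t(F_i/k(p_i)))$. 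Finally, substituting the conductor formula of \cite[Theorem 5.3]{Le20b},
\[
\Delta_t(F_i/k(p_i)) = \<\textstyle\prod_j a_j^{(i)} \cdot e_i\> - \<1\> + (-\<e_i\>)^n \cdot e_0(\Omega_{H^{F_i}/k(p_i)}, dt),
\]
and replacing $e_0(\Omega_{H^{F_i}/k(p_i)}, dt)$ by $e_{p_i}(\Omega_{X/k(p_i)}, dt)$ via Corollary~\ref{Eulerclass} (which identifies both with $e_0(\Omega_{\Aa^{n+1}_{k(p_i)}/k(p_i)}, dF_i)$), yields the stated formula after summing over $i$.

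The steps here are all bookkeeping on top of results already proved, so I do not anticipate a serious obstacle; the one point requiring care is the very first reduction, namely making sure the compatibility $\Sp_t(\chic(X_\eta)) = \chic(\Psi_f(\one_{X_\eta})/k)$ is applied correctly for the proper morphism $f:X\to B$ — this uses Property~\ref{property}(b), Proposition~\ref{PsiId} and Proposition~\ref{simon} together, and one should verify the base-field $k_0$ has characteristic zero (as assumed) so that $\Psi_{\id}$ is monoidal. A second minor point is the finite-separability hypothesis needed for the transfer maps $\Tr_{k(p_i)/k}$: in characteristic zero all residue field extensions $k(p_i)/k$ are separable, so Proposition~\ref{Transfer} applies without further comment. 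Everything else is substitution.
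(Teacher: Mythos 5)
Your proposal is correct and follows essentially the same route as the paper: split off the local contributions via Proposition~\ref{local} and cut-and-paste, convert each local term to $\Tr_{k(p_i)/k}(\Delta_t(F_i/k(p_i)))$ via Theorem~\ref{mainthm}, substitute the conductor formula of \cite[Theorem 5.3]{Le20b}, and replace the Euler class using Corollary~\ref{Eulerclass}. The only cosmetic difference is that you re-derive the identity $\Sp_t(\chic(X_\eta))=\chic(\Psi_f(\one_{X_\eta})/k)$ from Property~\ref{property}(b), Proposition~\ref{PsiId} and Proposition~\ref{simon}, whereas the paper simply cites \cite[Proposition 8.3]{Le20b} for it (and applies it at the end rather than the beginning); your derivation is exactly the argument underlying that citation, so this is not a substantive deviation.
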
 

\begin{proof} By applying Proposition~\ref{Transfer}, Proposition \ref{local} and Theorem \ref{mainthm} we obtain the formula 

\begin{align*}
\chic(\Psi_f({\one_X}_{\eta})) = & \sum_i  \chic(\Psi_f({\one_X}_{\eta})|_{p_i}) + \chic(X_\sigma\setminus \{p_1,\ldots,p_k\}/k) 
\\ =  & \sum_i \Tr_{k(p_i)/k}( \Delta_t (F_i/k(p_i)) + \<1\>) + \chic(X_\sigma) -\sum_i \Tr_{k(p_i)/k}(\<1\>) .
\end{align*}
This gives the global formula 
\[
\chic(\Psi_f({\one_X}_{\eta}))- \chic(X_\sigma) = \sum_i \Tr_{k(p_i)/k} ( \Delta_t (F_i/k(p_i))) .
\]
Substituting Levine-Pepin Lehalleur-Srinivas's conductor formula \cite[Theorem 5.6]{Le20b} gives
\[
\chic(\Psi_f({\one_X}_{\eta}))-\chic(X_\sigma) = \sum_i \Tr_{k(p_i)/k} [ \<\prod_j a_j^{(i)} \cdot e_i\> - \<1\> + (-\<e_i\>)^{n} \cdot e_{0}( \Omega _{H^{F_i}/k(p_i)} , dt ) ] . \]
But as we proved in Section 6, Corollary \ref{Eulerclass} we can replace  $e_{0}( \Omega _{H^{F_i}/k(p_i)} , dt )$ with $\mu^q_{f,p_i} = e_{p_i}( \Omega _{X/k(p_i)} , dt ) $. Then by \cite[Proposition 8.3]{Le20b} which states that $\chic(\Psi_f({\one_X}_{\eta})) = \Sp_t\chic(X_\eta/k(\eta))$ (in the proof), we get the desired result
\[
\Sp_t\chic(X_\eta/k(\eta))-\chic(X_\sigma/k) = \sum_i \Tr_{k(p_i)/k} [ \<\prod_j a_j^{(i)} \cdot e_i\> - \<1\> + (-\<e_i\>)^{n} \cdot\mu^q_{f,p_i} ] . \]
\end{proof}

\begin{Rem} \label{RefMilnor}
	
	Notice that this formula refines in quadratic forms the formula by Milnor \eqref{MilnorFormula} mentioned in the introduction. Assume $k=\mathbb{C}$, and let $f: X \to \Aa^1=\Spec \mathbb{C}[t]$ be a flat family of varieties, $X$ being an {$n+1$-}dimensional smooth $\mathbb{C}$-scheme, and let $X_t ={f^{-1}(\mathbb{G}_m)}$, $X_0 = f^{-1} (0)$.  Suppose that $f|_{X_t}: X_t \to \mathbb{G}_m$ is smooth, and $f|_{X_0} : X_0 \to \mathbb{C}$ has isolated $F_i$-weighted-homogeneous singular points $p_i$. We can specialize to $X \to \Spec k[t]_{(t)}$ and use our formula above. 
Then since $\rk \mu_{F_i,p_i}^q = \dim J(F_i,p_i) = \mu_{F_i,p_i}$, and from Remark~\ref{ComplexChi}, taking ranks on both sides of the equation in the formula above gives 
\[
\chi^{top}(X_t)-\chi^{top}(X_0) = (-1)^{n} \sum_i   \mu_{F_i,p_i} .
\]
which is Milnor's formula mentioned in the introduction \eqref{MilnorFormula}. Note that at each point, the difference $\< \prod_j a_j^{(i)} \cdot e_i\> - \<1\>$ vanishes under the rank map, as a difference of two rank $1$ quadratic forms{; similarly, the term $(-\<e_i\>)^{n}$ maps to $(-1)^n$. This simplification also occurs for $k=\mathbb{R}$, as $\prod_j a_j^{(i)} \cdot e_i$ and $e_i$ are squares in $\mathbb{R}$}{. Thus, these terms  are only apparent}  in the refined formulas{;} see also \cite[Section 1]{Le20b}. Similarly, the formula refines the Deligne-Milnor formula \eqref{DeligneFormula} in equal characteristic zero with isolated singularities of the type discussed here, by taking $\ell$-adic realisation. 
\end{Rem}

\section{The case of curves on a surface}

As an application of our main theorem, we develop here a formula for the difference between the quadratic Euler characteristic of curves on a surface, refining a formula for complex varieties deduced from {the formula of Jung-Milnor}.

Let $C$ be a reduced curve on a smooth projective surface $S$ over an algebraically closed field $k$ of characteristic zero. Let $\pi:\tilde{C}\to C$ be the normalisation. Let $p$ be a singular point of $C$. \\
Let $r_p$ be the the number of points in $\pi^{-1}(p)$; let $\delta_p$ be the length of the (finite length) $\OO_{C,p}$-module $\pi_*(\OO_{\tilde{C}, \pi^{-1}(p)})/\OO_{C,p}$; and let $\mu_p$ be the Milnor number defined above {for} the local defining equation for $C$, $f_p\in \OO_{S,p}$, at $p$. The Jung-Milnor formula \cite[Chapter 10]{Mil} states that
\[
2\delta_p=\mu_p+r_p-1 .
\]
If $C$ is irreducible, we have $h^0(C, \OO_C)=1=h^0(\tilde{C},\OO_{\tilde{C}})$ and the short exact sequence
\[
0\to \OO_C\to \pi_*\OO_{\tilde{C}}\to  \pi_*\OO_{\tilde{C}}/\OO_C\to0
\]
gives
\[
h^1(C,\OO_C)=h^1(\tilde{C},\OO_{\tilde{C}})+\sum_{x\in C_{sing}}\delta_p.
\]
Let $f_0$ be the canonical section of the invertible sheaf $\OO_S(C )$ and assume that $\OO_S(C )$ has a section $f_1$ whose divisor is a smooth curve $C_1$, such that each point of $C\cap C_1$ is a smooth point of $C$, and that the intersection is transverse.  In case $S=\P^2$, and $C$ is a curve of degree $e$, then $\OO_S(C )\cong \OO_{\P^2}(e)$, the  canonical section is just the section given by the defining equation $f_0$ of $C$, and a general homogeneous polynomial $f_1$ of degree $e$ will have the desired properties. $C_1$ is a smooth deformation of $C$, and so we have $g(C_1) = h^1(C,\OO_C)$; $g(\tilde{C}) = h^1(\tilde{C},\OO_{\tilde{C}})$. The classical formula obtained, relating the genus of $\tilde{C}$ and of  $C_1$ in case $C$ is irreducible, is then
\[
g(\tilde{C})-g(C_1)=\sum_{p\in C_{sing}} (1/2)(1-\mu_p-r_p)
\]
or in terms of the topological Euler characteristic ($=2-2g(-)$) of $C_1$ and $\tilde{C}$,
\begin{equation}\label{TopComp1}
\chi^{top}(C_1)-\chi^{top}(\tilde{C})=\sum_{p\in C_{sing}} 1-\mu_p-r_p ,
\end{equation}
which holds even if $C$ is not irreducible. We consider this as the Jung-Milnor formula with several singular points. We can also compare  with  $\chi^{top}(C )$. Since for a curve we have
$
C\setminus C_{sing}\cong \tilde{C}\setminus \pi^{-1}(C_{sing})
$,
we deduce
\[
\chi^{top}(\tilde{C})-\sum_{p\in C_{sing}}r_p=\chi^{top}(C )-\sum_{p\in C_{sing}}1.
\]
Putting this into the genus formula above, we see that this formula is equivalent to
\begin{equation}\label{TopComp2}
\chi^{top}(C_1)-\chi^{top}(C)=\sum_{p\in C_{sing}}(- \mu_p)=- \sum_{p\in C_{sing}} \dim J(f_{p},p),
\end{equation}
where we use some local defining equation $f_{p}\in \OO_{S,p}$ for $C$ to define the Jacobian ring. Using our main result we can deduce a refinement of formulas \ref{TopComp1}, \ref{TopComp2} with quadratic forms.

\begin{Cor} \label{CurveFormula}
	Let $C$ be a reduced curve on a smooth projective surface $S$ over a field $k$ of characteristic zero. Suppose that $\OO_S(C)$ admits a section $s$ with smooth divisor $C_1$ that intersects $C$ transversely. Suppose in addition that each singular point $p$ of $C$ is a quasi-homogeneous singularity; let $a_0^p, a_1^p$ denote the homogeneous weights (with $a_0^p, a_1^p$ relatively prime), let $e_p$ denote the homogeneous degree at $p$. Let $\pi:\tilde{C}\to C$ be the normalisation of $C$. Then  
	\[
	\Sp_t(\chic(C_\eta/\eta))-\chic(C/k)=\sum_{p \in C_{sing}}\Tr_{k(p)/k}(\<a_0^p a_1^p e_p \>-\<1\>-\<e_p\>\mu^q_{f_{p},p}),
	\]
	refining \eqref{TopComp2} by taking the rank; and
	\[
	\Sp_t(\chic(C_\eta/\eta))-\chic(\tilde{C}/k) = \sum_{p\in C_{sing}}\Tr_{k(p)/k}\left(\<a_0^p a_1^p e_p\>-\<e_p\>\mu^q_{f_{p},p}-(\sum_{q\in \pi^{-1}(p)}\Tr_{k(q)/k(p)}\<1\>) \right), \] refining \eqref{TopComp1} by taking the rank.
\end{Cor}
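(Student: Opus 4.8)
The plan is to present the degeneration of $C_1$ to $C$ as a pencil and feed it into the generalized conductor formula (Corollary~\ref{conductor}, equivalently Corollary~\ref{refMilnor}) in relative dimension $n=1$. Put $\OO:=k[t]_{(t)}$, $B:=\Spec\OO$, let $f_0\in H^0(S,\OO_S(C))$ be the canonical section with $V(f_0)=C$ and let $f_1$ be the section with smooth divisor $C_1$, and define
\[
X:=V(f_0-tf_1)\subset S\times_k B,\qquad f:X\to B\ \text{the projection}.
\]
Then $X_\sigma=V(f_0)=C$ and $X_\eta$ is the generic member of the pencil spanned by $f_0,f_1$; heuristically the closed fibre is $C$ and a nearby fibre is $C_1$.

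First I would check that $f:X\to B$ satisfies the hypotheses of Corollary~\ref{conductor}. Flatness and properness are immediate: $X$ is a relative effective Cartier divisor in the projective $B$-scheme $S\times_kB$ and $X_\sigma=C$ has the expected dimension $1$, so $f$ is flat, projective and of relative dimension $1$. For smoothness of $X$ over $k$: away from $C\cap C_1$ the section $f_1$ is invertible, so the projection $X\to S$ is locally an isomorphism (identifying $X$ with the graph of $f_0/f_1$), hence $X$ is smooth there since $S$ is; at a point $q\in C\cap C_1$, which by hypothesis is a smooth point of $C$, transversality lets one pick local coordinates $x,y$ on $S$ with $f_0=x\cdot(\text{unit})$, $f_1=y\cdot(\text{unit})$, and $V(f_0-tf_1)$ is then a smooth hypersurface (a graph over the $(y,t)$-coordinates) in the coordinates $x,y,t$. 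Smoothness of $X_\eta$ over $\eta$ follows from Bertini's theorem in characteristic $0$ (generic smoothness of a linear system away from its base locus) together with the same transversality computation at the base points $C\cap C_1$. Finally, the singular points $p$ of $C$ lie in $C\setminus C_1$, so near $p$ the section $f_1$ is a unit and on stalks $f$ is given by $t\mapsto f_0/f_1\in\OO_{S,p}$, which agrees up to a unit with a local equation $f_p$ of $C$ at $p$; hence $f$ looks at $p$ (Definition~\ref{qdef}) like the same weighted-homogeneous singularity $F_p$ as $C$ does. Thus the hypothesis that each $p$ is a quasi-homogeneous singularity of $C$, with pairwise coprime weights $a_0^p,a_1^p$, weighted degree $e_p$ and leading form $F_p$ defining a smooth quotient hypersurface in $\P_{k(p)}(a_0^p,a_1^p)$, is exactly Assumption~\ref{qassumption} for $X$.

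Granting this, Corollary~\ref{conductor} with $n=1$, combined with Corollary~\ref{Eulerclass} and Theorem~\ref{SSEulerClass} (which identify the local Euler class $e_p(\Omega_{X/k(p)},dt)$ with the Scheja--Storch form $\mu^q_{f_p,p}$ of a local equation of $C$ at $p$), gives
\[
\Sp_t\chic(C_\eta/K)-\chic(C/k)=\sum_{p\in C_{sing}}\Tr_{k(p)/k}\big(\<a_0^pa_1^pe_p\>-\<1\>-\<e_p\>\cdot\mu^q_{f_p,p}\big),
\]
since $X_\sigma=C$ and $X_\eta=C_\eta$; this is the first asserted identity. For the second, $\pi$ restricts to an isomorphism $\tilde C\setminus\pi^{-1}(C_{sing})\xrightarrow{\sim}C\setminus C_{sing}$, so the cut-and-paste property of $\chic$ (Proposition~\ref{cutandpaste}), the identity $\chic(\Spec k(p)/k)=\Tr_{k(p)/k}\<1\>$ (Proposition~\ref{Transfer}) and transitivity of the Scharlau transfer give
\[
\chic(C/k)-\chic(\tilde C/k)=\sum_{p\in C_{sing}}\Tr_{k(p)/k}\Big(\<1\>-\sum_{q\in\pi^{-1}(p)}\Tr_{k(q)/k(p)}\<1\>\Big);
\]
adding this to the first identity yields the second. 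The refinement claims follow by specialising to $k$ algebraically closed and applying the rank homomorphism: all transfers become trivial, $\rk(\<a_0^pa_1^pe_p\>-\<1\>)=0$, $\rk(\<e_p\>\cdot\mu^q_{f_p,p})=\dim_kJ(f_p,p)=\mu_p$, and $\rk\Sp_t\chic(C_\eta)$ equals the topological Euler characteristic of a general complex member of the pencil, which is $\chi^{top}(C_1)$; one then reads off \eqref{TopComp2} and \eqref{TopComp1}.

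The hard part will be the geometric bookkeeping in the second paragraph — above all, confirming that the singularity induced on $X_\sigma$ at each $p$ really does fall under Assumption~\ref{qassumption}, i.e.\ that each $p$ being a quasi-homogeneous singularity of $C$ is understood to entail that the leading form $F_p$ defines a \emph{smooth quotient hypersurface} in the weighted projective line $\P_{k(p)}(a_0^p,a_1^p)$ in the sense of Definition~\ref{smhyper} (which for $n=1$ includes the non-vanishing conditions $F_p(v_i)\neq0$ when $a_i>1$), together with the routine but careful verification of smoothness of the total space $X$ and of the generic fibre $X_\eta$ at the base points $C\cap C_1$. Everything after that is a formal manipulation of the conductor formula, cut-and-paste, and ranks.
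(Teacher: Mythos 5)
Your proposal is correct and follows essentially the same route as the paper: build the pencil $X=V$ of a combination of the canonical section $f_0$ and the section $s$ over $\Spec k[t]_{(t)}$ (the paper uses $ts+(1-t)f_0$, you use $f_0-tf_1$, which is immaterial), verify smoothness of $X$ and $X_\eta$ from transversality at $C\cap C_1$, apply the generalized conductor formula with $n=1$ together with Corollary~\ref{Eulerclass} and Theorem~\ref{SSEulerClass} to identify the local term with $\<e_p\>\mu^q_{f_p,p}$, and deduce the second identity from cut-and-paste along the normalisation, with the rank argument handling the classical refinements. The only point to keep in mind, which the paper also addresses only by a parenthetical remark, is the unit ambiguity in the local expression $f^*(t)=f_p\cdot(\text{unit})$, harmless here since it only rescales $F_p$ and the local Euler class in rank $2$ is insensitive to it.
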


\begin{proof}
	Let $f_0$ be the canonical section of $\OO_S(C )$ and $s$ as in the statement. Let $ B := \Spec k[t]_{(t)}$, let $H=ts+(1-t)f_0$, form the surface $X:=V(H)\subset S \times B$, and let $f:X\to B$ be the projection.
	$H_t =  s -  f_0 $, the assumption on $C\cap C_1$ implies that $X$ is smooth over $k$ with generic fibre  $X_\eta$ a smooth curve over $\eta=\Spec k(t)$, and with special fibre $C$. Since each singular point $p$ looks like a weighted homogeneous singularity of degree $e_p$ with weights $a_0^p, a_1^p$, the formula of Corollary~\ref{conductor} for $f:X\to B$ becomes
	\[
	\Sp_t(\chic(C_\eta/\eta))-\chic(C/k)=\sum_{p\in C_{sing}}\Tr_{k(p)/k}(\<a_0^p a_1^p e_p \>-\<1\>-\<e_p\>e_p(\Omega_{X/k}, dt)).
	\]
	Note that $e_p(\Omega_{X/k}, dt)=e_p(\Omega_{S/k}, df_{p})$, where $f_{p}\in \OO_{S,p}$ is any local expression for $f_0$ (this is independent of choice of local expression, since $\Omega_{S,p}$ has rank 2), and so get then first formula
	\[
	\Sp_t(\chic(C_\eta/\eta))-\chic(C/k)=\sum_{p \in C_{sing}}\Tr_{k(p)/k}(\<a_0^p a_1^p e_p \>-\<1\>-\<e_p\>\mu^q_{f_{p},p}) .
	\]
	For the second formula, we just have to recall that since the normalisation of a curve, $\tilde{C} \to C$, satisfies $\tilde{C} \setminus \pi^{-1}(C_{sing}) \simeq C \setminus C_{sing}$, and using cut and paste property, we have
	\[ \chic(\tilde{C}/k)-\chic(C/k) = \chic(\pi^{-1}(C_{sing})/k) - \chic(C_{sing} /k)=  \sum_{p\in C_{sing}} \Tr_{k(p)/k} ( \sum_{q\in \pi^{-1}(p)}\Tr_{k(q)/k(p)}\<1\>  - \<1\>) ;\]
	this gives the last formula for the difference
	 \[
	\Sp_t(\chic(C_\eta/\eta))-\chic(\tilde{C}/k) = (\Sp_t(\chic(C_\eta/\eta))-\chic(C/k))-(   \chic(\tilde{C}/k) - \chic(C/k) ) . \]
	To see that those formulas refine the classical formulas over $\mathbb{C}$ by taking ranks, use remark~\ref{ComplexChi}, note that $C_\eta$ is a smooth deformation of $C_1$, so $C_\eta$ and $C_1$ have the same topological Euler characteristic after choosing an embedding of $k(p)$ into $\mathbb{C}$, and that $\rk q_{f_p,p} = \dim J(f,p) = \mu_{F_p,p}$.
\end{proof}
 
We conclude with the following identity in the Witt ring $W(k)$.

\begin{Cor} \label{WittIdentity} Let $C$ be a reduced curve on a smooth projective surface $S$ over a field $k$ of characteristic zero. Suppose that $\OO_S(C)$ admits a section $s$ with smooth divisor $C_1$ that intersects $C$ transversely. Suppose in addition that each singular point $p$ of $C$ is a quasi-homogeneous singularity; let $a_0^p, a_1^p$ denote the homogeneous weights (with $a_0^p, a_1^p$ relatively prime), let $e_p$ denote the homogeneous degree at $p$. Then 
\[
\sum_{p\in C_{sing}}\Tr_{k(p)/k}\left(\<a_0^p a_1^p e_p\>-\<e_p\>\mu^q_{f_{p},p}+\sum_{q\in \pi^{-1}(p)}\Tr_{k(q)/k(p)}\<1\>\right)=0
\]
in $W(k)$. 
\end{Cor}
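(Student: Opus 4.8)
The plan is to obtain this as a formal consequence of the second displayed equality in Corollary~\ref{CurveFormula}, which is an identity in $\GW(k)$, by pushing it forward along the quotient homomorphism $\GW(k)\to W(k)$ that kills the hyperbolic form $h:=\<1\>+\<-1\>$; note that the hypotheses of the present statement are exactly those of Corollary~\ref{CurveFormula}, so that result applies. The crux is that the two ``global'' terms on the left-hand side of that equality, $\Sp_t(\chic(C_\eta/\eta))$ and $\chic(\tilde C/k)$, each lie in $\mathbb{Z}\cdot h$ and hence vanish in $W(k)$. Granting this, reducing the $\GW(k)$-identity modulo $\mathbb{Z}\cdot h$ makes its left-hand side zero, so the right-hand side vanishes in $W(k)$, and collecting the transfer contributions $\sum_{q\in\pi^{-1}(p)}\Tr_{k(q)/k(p)}\<1\>$ gives precisely the asserted identity.

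The one point to establish is the vanishing of curve Euler characteristics in the Witt ring. For a smooth projective curve $D$ of genus $g$ over a field $F$ of characteristic zero one has $\chi(D/F)=(1-g)\cdot h$ in $\GW(F)$; this follows from the general fact that the categorical Euler characteristic of a smooth proper $F$-scheme of odd dimension is a multiple of $h$ (the cup-product pairing on the middle-degree cohomology is alternating, which forces the class to be hyperbolic), or can be checked directly by Poincar\'e duality. In particular $\chi(D/F)=0$ in $W(F)$. Now $\tilde C$, being the normalisation of a reduced projective curve over a characteristic-zero field, is a disjoint union of smooth projective curves over finite extensions of $k$; since $\chic=\chi$ for proper schemes and the Scharlau transfer of a multiple of $h$ is again a multiple of $h$, we get $\chic(\tilde C/k)\in\mathbb{Z}\cdot h$, hence $\chic(\tilde C/k)=0$ in $W(k)$. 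Likewise $C_\eta$ is a smooth projective curve over $\eta=\Spec K$, so $\chic(C_\eta/\eta)\in\mathbb{Z}\cdot h_K$; and since $\Sp_t\<1\>=\<1\>$ and $\Sp_t\<-1\>=\<-1\>$ (as $-1$ is a unit of $\OO$ with residue $-1$), $\Sp_t$ carries $h_K$ to $h_k$, hence maps $\mathbb{Z}\cdot h_K$ into $\mathbb{Z}\cdot h_k$ and descends to a ring homomorphism $W(K)\to W(k)$ under which $\chic(C_\eta/\eta)\mapsto 0$.

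Assembling these: reduce the $\GW(k)$-identity of Corollary~\ref{CurveFormula} modulo $\mathbb{Z}\cdot h$, use the two vanishings to drop the left-hand side, and read off the $W(k)$-identity, using additivity of $\Tr_{k(p)/k}$ and $\Tr_{k(q)/k(p)}$ to combine the point-counting contributions into the stated form. The main (and essentially only) obstacle is the input of the second paragraph, namely that the motivic Euler characteristic of a smooth projective curve is hyperbolic and hence dies in the Witt ring; this is exactly what makes the global terms disappear and exhibits the Witt-class identity as a purely local statement at the singular points of $C$. Everything else is routine bookkeeping with the quotient map $\GW(k)\to W(k)$, the Scharlau transfers, and the cut-and-paste relations already used in the proof of Corollary~\ref{CurveFormula}.
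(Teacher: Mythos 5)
Your proposal is essentially the paper's own proof: the paper likewise deduces the identity from the second formula of Corollary~\ref{CurveFormula} by noting that $\chic(Y/k)=0$ in $W(k)$ for $Y$ smooth projective of odd dimension (citing \cite[Example 1.7, 2.]{Le20a}), which kills $\chic(\tilde{C}/k)$ and, after the descent of $\Sp_t$ to Witt rings, also $\Sp_t(\chic(C_\eta/\eta))$; your extra paragraphs merely spell out the hyperbolicity of curve Euler characteristics and the compatibility of $\Sp_t$ and the Scharlau transfers with the quotient $\GW\to W$, which the paper leaves implicit. One caveat, which applies to the paper's statement as much as to your write-up: what one actually reads off from Corollary~\ref{CurveFormula} is the identity with $-\sum_{q\in \pi^{-1}(p)}\Tr_{k(q)/k(p)}\<1\>$ (equivalently $+\sum_{q}\Tr_{k(q)/k(p)}\<-1\>$), and since $-\<1\>=\<-1\>\neq\<1\>$ in $W(k)$ in general this is not the displayed formula with $+\<1\>$; for instance, over $k=\mathbb{R}$ a nodal cubic with two real branches gives signature $4$ for the displayed sum, so the sign in the stated corollary appears to be a typo, and your final "read off" step silently performs this (in general invalid) sign flip rather than proving the statement as literally written.
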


\begin{proof} For $Y$ smooth and projective of odd dimension over $k$, $\chic(Y/k)=0$ in $W(k)$ (see \cite[Example 1.7, 2.]{Le20a}).
\end{proof}
\small
\bibliographystyle{alphamod}
\let\mathbb=\mathbf

\begin{flushleft}
\small Current address of the author: \\
 \textsc{Institut Galil\'ee, Universit\'e Sorbonne Paris Nord\\
	99 avenue Jean Baptiste cl\'ement,
	93430 Villetaneuse, France} \\
ran@math.univ-paris13.fr
\end{flushleft}

\begin{flushleft}
	\small Address where work on the paper has mostly been made: \\
	\textsc{Universität Duisburg-Essen, Fakultät Mathematik\\
		Campus Essen, 45117 Essen, Germany} \\
\end{flushleft}

\begin{flushleft}
	\small Keywords: \\
	\textsc{Quadratic Euler characteristic \\
		Conductor formulas \\
		Motivic nearby cycles \\
		Grothendieck-Witt ring \\
		Motivic homotopy theory \\
		Invariants of Singularities}
\end{flushleft}

\end{document}